\newcommand{\reals}{\mathbb{R}}
\newcommand{\complex}{\mathbb{C}}
\newcommand{\bracketb}[1]{\Big[#1\Big]}
\newcommand{\angles}[1]{\left\langle #1 \right\rangle}
\newcommand{\pb}[1]{\left\{#1\right\}}
\newcommand{\paraa}[1]{\big(#1\big)}
\newcommand{\parab}[1]{\Big(#1\Big)}
\newcommand{\diag}{\operatorname{diag}}
\newtheorem{theorem}{Theorem}[section]
\newtheorem{lemma}[theorem]{Lemma}
\newtheorem{proposition}[theorem]{Proposition}
\theoremstyle{definition}
\newtheorem{definition}[theorem]{Definition}
\theoremstyle{remark}
\numberwithin{equation}{section}
\newcommand{\Tr}{\operatorname{Tr}}
\newcommand{\A}{\mathcal{A}}
\newcommand{\B}{\mathcal{B}}
\renewcommand{\P}{\mathcal{P}}
\newcommand{\R}{\mathcal{R}}
\newcommand{\J}{\mathcal{J}}
\renewcommand{\d}{\partial}
\newcommand{\TSigma}{T\Sigma}
\newcommand{\eps}{\varepsilon}
\newcommand{\nablab}{\bar{\nabla}}
\newcommand{\Gammab}{\bar{\Gamma}}
\newcommand{\Rb}{\bar{R}}
\newcommand{\D}{\mathcal{D}}
\newcommand{\Dt}{\widetilde{\D}}
\newcommand{\KP}{K\"ahler--Poisson}
\newcommand{\nablah}{\hat{\nabla}}
\newcommand{\nablat}{\widetilde{\nabla}}
\newcommand{\nh}{\nablah}
\newcommand{\Der}{\operatorname{Der}}
\newcommand{\X}{\mathcal{X}}
\newcommand{\N}{\mathcal{N}}
\renewcommand{\J}{\mathcal{J}}
\renewcommand{\div}{\operatorname{div}}
\newcommand{\ints}{\int_\A}
\title[]{On the geometry of K\"ahler--Poisson structures}
\author{Joakim Arnlind}
\address[Joakim Arnlind]{Dept. of Math.\\
Link\"oping University\\
581 83 Link\"oping\\
Sweden}
\email{joakim.arnlind@liu.se}
\author{Gerhard Huisken}
\address[Gerhard Huisken]{Max Planck Institute for Gravitational Physics\\
Am M\"uhlenberg 1\\
D-14476 Golm\\
Germany}
\email{gerhard.huisken@aei.mpg.de}
\thanks{}
\subjclass[2000]{}
\keywords{}
\begin{document}

\begin{abstract}
  We prove that the Riemannian geometry of almost K\"ahler manifolds
  can be expressed in terms of the Poisson algebra of smooth functions
  on the manifold. Subsequently, K\"ahler--Poisson algebras are
  introduced, and it is shown that a corresponding purely algebraic
  theory of geometry and curvature can be developed. As an
  illustration of the new concepts we give an algebraic proof of the
  statement that a bound on the (algebraic) Ricci curvature induces a
  bound on the eigenvalues of the (algebraic) Laplace operator, in
  analogy with the well-known theorem in Riemannian geometry.

  As the correspondence between Poisson brackets of smooth functions
  and commutators of operators lies at the heart of quantization, a
  purely Poisson algebraic proof of, for instance, such a ``Gap
  Theorem'', might lead to an understanding of spectral properties in
  a corresponding quantum mechanical system.
\end{abstract}

\maketitle

\section{Introduction}

\noindent In a series of papers, the possibility of expressing
differential geometry of Riemannian submanifolds as Nambu-algebraic
expressions in the function algebra has been investigated
\cite{ahh:discretegb,ahh:nsurface,ahh:psurface,ahh:nambudiscrete}.
More precisely, it was shown that on a $n$-dimensional submanifold
$\Sigma$, geometric objects can be written in terms of a $n$-ary
alternating multi-linear map acting on the embedding functions.  One
of the original motivations for studying the problem came from matrix
regularizations of surfaces in the context of ``Membrane Theory''
(cp. \cite{h:phdthesis}), where smooth functions are mapped to
hermitian matrices such that the Poisson bracket of functions
correspond to the commutator of matrices (as the matrix dimension
becomes large).  In this context, matrices corresponding to the
embedding coordinates of a surface arise as solutions to equations,
which contain matrices associated to surfaces of arbitrary genus. In
order to identify the topology of a solution, it is desirable to be
able to compute geometric invariants in terms of the embedding
matrices and their commutators. This was illustrated in
\cite{ahh:discretegb} where formulas for the discrete scalar curvature
and the discrete genus were presented.

A natural generalization to higher dimensional manifolds is to require
that the $n$-ary multi linear bracket corresponds to a $n$-ary
``commutator'' of matrices. However, there is no natural candidate for
such a $n$-ary map, and it is hard to construct explicit realizations.
One may then ask the following question: Is there a particular class
of manifolds (of dimension greater than two) for which one can use a
Poisson bracket on the space of smooth functions to express geometric
quantities? In the following we shall demonstrate that almost K\"ahler
manifolds provide a context where an affirmative answer can be given.

Once a theory of Riemannian differential geometry in terms of Poisson
brackets has been developed, one wonders whether the obtained formulas
make sense in an arbitrary Poisson algebra? That is, can one use the
results to develop a theory of Poisson algebraic geometry? As
expected, this is not possible in general, and one might search for an
intrinsic definition of algebras for which a theory of differential
geometry can be implemented. It turns out that starting from the
simple assumption that the square of the Poisson bivector is
(proportional to) a projection operator, one can in fact reproduce
several standard results with purely algebraic methods. For instance,
if the sectional curvature is independent of the choice of tangent
plane, then it follows that the sectional curvature is in the center
of the Poisson algebra (i.e. a ``Poisson-constant''). Moreover, one
can prove that a bound on the Ricci curvature induces a bound on the
eigenvalues of the Laplace operator. This framework opens up for an
algebraic treatment of Riemannian geometry, which have many potential
applications. In particular, as the correspondence between Poisson
brackets and operator commutators lies at the heart of quantization,
the presented results should have an impact on the quantization of
geometrical systems. Moreover, concerning the original motivation, our
result fits nicely with the fact that for any (quantizable) compact
K\"ahler manifold there exists a matrix regularization
\cite{bms:toeplitz}, and it suggests a way to define matrix
regularizations without any reference to a manifold. Finally, we hope
that it is possible to extend the results to non-commutative Poisson
algebras, providing an interesting approach to non-commutative
geometry.

The structure of the paper is as follows. In Section
\ref{sec:preliminaries} we recall some concepts for Riemannian
submanifolds, and in Section \ref{sec:KPstructures} \KP\ structures
are introduced together with some basic results. Section
\ref{sec:diffgeoKP} is devoted to the reformulation of Riemannian
geometry of K\"ahler submanifolds in terms of Poisson brackets of the
embedding coordinates. In Section \ref{sec:KPalgebras} we formulate a
theory of algebraic Riemannian geometry for \KP\ algebras and show
that analogues of differential geometric theorems can be proven in the
purely algebraic setting.

\section{Preliminaries}\label{sec:preliminaries}

\noindent Let $(M,\eta)$ be a Riemannian manifold of dimension $m$,
and let $(\Sigma,g)$ be a $n$-dimensional submanifold of $M$ with
induced metric $g$. Given local coordinates $x^1,\ldots,x^m$ on $M$,
we consider $\Sigma$ as embedded in $M$ via $x^i(u^1,\ldots,u^n)$
where $u^1,\ldots,u^n$ are local coordinates on $\Sigma$. Indices
$i,j,k,\ldots$ run from $1$ to $m$ and indices $a,b,c,\ldots$ run from
$1$ to $n$. The covariant derivative on $M$ is denoted by $\nablab$
(with Christoffel symbols $\Gammab^i_{jk}$) and the covariant
derivative on $\Sigma$ by $\nabla$ (with Christoffel symbols
$\Gamma^a_{bc}$).  The tangent space $\TSigma$ is regarded as a
subspace of the tangent space $TM$ and at each point of $\Sigma$ one
can choose $e_a=(\d_ax^i)\d_i$ as basis vectors in $\TSigma$, and in
this basis we define $g_{ab}=\eta(e_a,e_b)$. Moreover, we introduce an
orthonormal basis of $\TSigma^\perp$, given by the vectors $N_A$ for
$A=1,\ldots,p$.

The formulas of Gauss and Weingarten split the covariant derivative in
$M$ into tangential and normal components as
\begin{align}
  &\nablab_X Y = \nabla_X Y + \alpha(X,Y)\label{eq:GaussFormula}\\
  &\nablab_XN_A = -W_A(X) + D_XN_A\label{eq:WeingartenFormula}
\end{align}
where $X,Y\in \TSigma$ and $\nabla_X Y$, $W_A(X)\in\TSigma$ and
$\alpha(X,Y)$, $D_XN_A\in\TSigma^\perp$. By expanding $\alpha(X,Y)$ in
the basis $\{N_1,\ldots,N_p\}$ one can write (\ref{eq:GaussFormula}) as
\begin{align}
  &\nablab_X Y = \nabla_X Y + \sum_{A=1}^ph_A(X,Y)N_A,\label{eq:GaussFormulah}
\end{align}
and we set $h_{A,ab} = h_A(e_a,e_b)$. From the above equations one derives the relation
\begin{align}
  h_{A,ab} &= -\eta\paraa{e_a,\nablab_b N_A},
\end{align}
as well as Weingarten's equation
\begin{align}
  h_A(X,Y) = \eta\paraa{W_A(X),Y},  
\end{align}
which implies that $(W_A)^a_b = g^{ac}h_{A,cb}$.

From formulas (\ref{eq:GaussFormula}) and (\ref{eq:WeingartenFormula})
one also obtains Gauss' equation, i.e. an expression for the curvature $R$
of $\Sigma$ in terms of the curvature $\Rb$ of $M$, as
\begin{equation}\label{eq:GaussEquation}
  \begin{split}
    R(X,Y,Z,V) &= \Rb(X,Y,Z,V)+\eta\paraa{\alpha(X,Z),\alpha(Y,V)}\\
    &\qquad-\eta\paraa{\alpha(X,V),\alpha(Y,Z)}
  \end{split}
\end{equation}
where $R(X,Y,Z,V)\equiv\eta\paraa{R(Z,V)Y,X}$ and $X,Y,Z,V\in\TSigma$.

\section{K\"ahler  Poisson structures}\label{sec:KPstructures}

\noindent In case $\Sigma$ is a surface, a generic
Poisson structure may be written as
\begin{align*}
  \{f_1,f_2\} = \frac{1}{\rho}\eps^{ab}\paraa{\d_af_1}\paraa{\d_bf_2}
\end{align*}
for some density $\rho$, where $\eps^{ab}$ is the totally
anti-symmetric Levi-Civita symbol. Setting $\theta^{ab} =
\eps^{ab}/\rho$ and $g=\det(g_{ab})$, one notes that
\begin{align}\label{eq:surfacePoissonRel}
  \frac{g}{\rho^2}g^{ab} = \theta^{ap}\theta^{bq}g_{pq}
  = \frac{1}{\rho}\eps^{ap}\frac{1}{\rho}\eps^{bq}g_{pq}
\end{align}
since the right hand side is simply the cofactor expansion of the
inverse metric. If $\Sigma$ is embedded in $M$ via the embedding
coordinates $x^i$, the above relation allows one to write many of the
differential geometric objects of $\Sigma$ in terms of $\{x^i,x^j\}$
\cite{ahh:psurface}. It was also shown that for higher dimensional
submanifolds one obtains a similar description using $n$-ary Nambu
brackets \cite{ahh:nambudiscrete,ahh:nsurface}. One may ask if there
is a class of $n$-dimensional submanifolds (with $n>2$) that allows for a
description in terms of Poisson brackets? It turns out that relation
(\ref{eq:surfacePoissonRel}) plays a key role in the answer to this
question.  Therefore, we make the following definition.

\begin{definition}
  Let $(\Sigma,g)$ be a Riemannian manifold and let $\theta$ be a
  Poisson bivector on $\Sigma$. If there exists a strictly positive
  $\gamma\in C^{\infty}(\Sigma)$ such that
  \begin{equation}\label{eq:projectivePoissonStructure}
    \gamma^2g^{-1}(\sigma,\tau) = g\paraa{\theta(\sigma),\theta(\tau)}
  \end{equation}
  for all 1-forms $\sigma,\tau$, then $\theta$ is called \emph{an almost
  \KP\ structure on} $(\Sigma,g)$. In local coordinates, the
  above relation is equivalent to
  \begin{align}\label{eq:KPcondlocal}
    \gamma^2g^{ab} = \theta^{ap}\theta^{bq}g_{pq}.
  \end{align}
  Moreover, we call the function $\gamma^2$ the \emph{characteristic
    function} of the almost \KP\ structure.
\end{definition}

\noindent Note that such ``self-dual'' Poisson structures, and
corresponding geometric formulas, have also been studied in
the context of matrix models for gravity
\cite{bs:curvatureMatrix,bs:curvatureMatrixII}.

The reason for calling it an almost \KP\ structure is that the Poisson
structure induced from the K\"ahler form always fulfills
(\ref{eq:projectivePoissonStructure}), and that the existence of an
almost \KP\ structure implies that $\Sigma$ is an almost K\"ahler
manifold (with respect to a rescaled metric, as we shall see). Let us
first prove that an almost K\"ahler manifold has an almost \KP\
structure.

\begin{proposition}\label{prop:aKMgivesKP}
  Let $(\Sigma,g)$ be an almost K\"ahler manifold with K\"ahler form
  $\omega$. The Poisson bracket on $C^\infty(\Sigma)$, given as
  \begin{align}\label{eq:KahlerFormPB}
    \{f_1,f_2\} = \omega(X_{f_1},X_{f_2})
  \end{align}
  where $\omega(X_f,Y)=df(Y)$ for all $Y\in T\Sigma$, defines an
  almost \KP\ structure on $\Sigma$ with characteristic function
  $1$.
\end{proposition}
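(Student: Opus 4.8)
The plan is to use the standard structure theory of almost K\"ahler manifolds. Recall that $(\Sigma,g)$ being almost K\"ahler means there is a compatible almost complex structure $J$, i.e. $J^2=-\mathds{1}$ and $g(JX,JY)=g(X,Y)$, such that the K\"ahler form is $\omega(X,Y)=g(JX,Y)$ and $d\omega=0$. Since $J$ is a pointwise isomorphism, $\omega$ is non-degenerate, so for each $f\in C^\infty(\Sigma)$ the Hamiltonian vector field $X_f$ determined by $\omega(X_f,Y)=df(Y)$ exists and is unique. First I would note that this makes (\ref{eq:KahlerFormPB}) a genuine Poisson bracket: bilinearity, antisymmetry and the Leibniz rule are immediate from the definition and non-degeneracy of $\omega$, while the Jacobi identity follows from $d\omega=0$ in the usual way. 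It then remains to verify (\ref{eq:projectivePoissonStructure}) with $\gamma^2\equiv 1$.

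Next I would rewrite the Hamiltonian vector field in metric terms. From $g(JX_f,Y)=\omega(X_f,Y)=df(Y)=g(\grad f,Y)$ for all $Y$, non-degeneracy of $g$ gives $JX_f=\grad f$, hence $X_f=-J\grad f$. Now identify the Poisson bivector $\theta$ with the bundle map $T^*\Sigma\to T\Sigma$ implicit in (\ref{eq:projectivePoissonStructure}), determined by $\{f_1,f_2\}=\langle df_2,\theta(df_1)\rangle$ (or its negative — the sign will not matter below). Writing a $1$-form at a point as $\sigma=df$ and denoting by $\sigma^\sharp=\grad f$ its metric dual, the computation above yields $\theta(\sigma)=X_f=-J\sigma^\sharp$; by linearity this holds for all $1$-forms $\sigma$.

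Finally I would substitute into the right-hand side of (\ref{eq:projectivePoissonStructure}):
\begin{equation*}
  g\paraa{\theta(\sigma),\theta(\tau)}=g\paraa{J\sigma^\sharp,J\tau^\sharp}=g\paraa{\sigma^\sharp,\tau^\sharp}=g^{-1}(\sigma,\tau),
\end{equation*}
where the middle equality is exactly the $J$-invariance of $g$ and the last is the definition of the induced metric on $1$-forms. This is (\ref{eq:projectivePoissonStructure}) with characteristic function the constant $1$. The same thing can be seen in local coordinates via (\ref{eq:KPcondlocal}): the Poisson bivector of a symplectic form is the matrix inverse of $\omega_{ab}=g_{ac}J^c_b$, which gives $\theta^{ab}=-g^{ac}J^b_c$, whence $\theta^{ap}\theta^{bq}g_{pq}=g^{ac}g^{bd}\paraa{J^p_cJ^q_dg_{pq}}=g^{ac}g^{bd}g_{cd}=g^{ab}$, using the compatibility identity $J^p_cJ^q_dg_{pq}=g_{cd}$. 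There is no genuine obstacle here; the only thing requiring a little care is keeping the musical and sign conventions consistent, and the one substantive input is the $J$-invariance of the metric, which is precisely what forces the characteristic function to be the constant $1$ rather than some non-trivial function.
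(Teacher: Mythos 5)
Your proof is correct and follows essentially the same route as the paper: identify the Poisson bivector as the inverse of $\omega$, express it as $-Jg^{-1}$ (equivalently $\theta(\sigma)=-J\sigma^\sharp$), and then invoke the $J$-invariance (hermiticity) of $g$ to get $\theta^{ap}\theta^{bq}g_{pq}=g^{ab}$, which is exactly the paper's computation. The only additions are the (welcome but routine) verification that (\ref{eq:KahlerFormPB}) is a Poisson bracket and the invariant reformulation via musical isomorphisms, neither of which changes the substance of the argument.
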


\begin{proof}
  It is a standard fact that formula (\ref{eq:KahlerFormPB}) defines a
  Poisson bracket for any symplectic form $\omega$, and the Poisson
  bivector is the inverse of $\omega$. Let us now show that formula
  (\ref{eq:KPcondlocal}) is fulfilled with $\gamma^2=1$.

  Let $J^a_b$ be the almost complex structure of $\Sigma$. The
  K\"ahler form is then written as $\omega_{ab}=g_{ac}J^c_b$. The
  inverse of $\omega$ is then computed to be
  \begin{align*}
    \theta^{ab}=-\omega^{ab}=\omega_{pq}g^{pa}g^{qb} = -J^a_cg^{cb}.
  \end{align*}
  Using the fact that $g$ is an hermitian metric one obtains
  \begin{align*}
    \theta^{ap}\theta^{bq}g_{pq} = J^a_rg^{rp}J^b_sg^{sq}g_{pq}
    = g^{rp}J^a_rJ^b_p = g^{ab},
  \end{align*}
  which proves that $\theta$ is an almost \KP\ structure with $\gamma^2=1$.
\end{proof}

\noindent On the other hand, it immediately follows from
(\ref{eq:KPcondlocal}) that ${\J^a}_b\equiv\gamma^{-1}{\theta^a}_b$ is
an almost complex structure, i.e. $\J^2(X)=-X$ for all
$X\in\TSigma$. However, it is not necessarily integrable. Given a
Riemannian manifold $(\Sigma,g)$ with an almost \KP\ structure
$\theta$ one can define $\tilde{g} = \gamma^{-1}g$, and since $\theta$
is \KP\ structure it is easy to see that $\tilde{g}$ is an hermitian
metric with respect to the complex structure defined
above. Furthermore, one defines the K\"ahler form $\Omega$ as
\begin{align*}
  \Omega(X,Y) = \tilde{g}(\J(X),Y), 
\end{align*}
which implies that $\Omega_{ab}=-\frac{1}{\gamma^2}\theta_{ab}$. Since
$\theta$ is an almost \KP\ structure, it follows that $\Omega$ is the
inverse of $\theta$, i.e. $\Omega_{ab}\theta^{bc} =
\delta_a^c$, which implies that $d\Omega=0$ since $\theta$
satisfies the Jacobi identity. Hence, $(\Sigma,\tilde{g},\J)$ is an
almost K\"ahler manifold. Thus, we have proved the following statement: 

\begin{proposition}
  Let $(\Sigma,g)$ be a Riemannian manifold such that there exists a
  \KP\ structure $\theta$ with characteristic function
  $\gamma^2$. Then $(\Sigma,\gamma^{-1}g,\J)$ is an almost K\"ahler
  manifold, with ${\J^a}_b=\gamma^{-1}{\theta^{a}}_b$.
\end{proposition}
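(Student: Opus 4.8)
The plan is to verify in turn each ingredient of the almost K\"ahler structure claimed in the statement, following the chain of remarks immediately preceding it; essentially everything reduces to bookkeeping with the local identity (\ref{eq:KPcondlocal}) and the antisymmetry of $\theta$, with one classical input at the end. First I would note that (\ref{eq:KPcondlocal}) forces $\theta$ to be non-degenerate: reading $\gamma^2 g^{ab}=\theta^{ap}\theta^{bq}g_{pq}$ as a matrix identity, the right-hand side equals $\gamma^2 g^{-1}$, which is invertible since $g$ is positive definite and $\gamma>0$; hence $\theta$ is invertible and $\Sigma$ is even-dimensional. Next I would check that ${\J^a}_b=\gi{\theta^a}_b$ satisfies $\J^2(X)=-X$: writing ${\theta^a}_b=\theta^{ac}g_{cb}$, using $\theta^{ab}=-\theta^{ba}$, and applying (\ref{eq:KPcondlocal}) once yields ${\J^a}_c{\J^c}_b=\gamma^{-2}\theta^{ak}g_{kc}\theta^{cl}g_{lb}=-\gamma^{-2}(\theta^{ak}\theta^{lc}g_{kc})g_{lb}=-\delta^a_b$.

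Then I would set $\tilde g=\gi g$ and verify it is Hermitian with respect to $\J$, i.e. $g_{ab}{\J^a}_c{\J^b}_d=g_{cd}$; after the conformal factors cancel this reduces again to the identity $g_{ab}\theta^{ak}\theta^{bl}=\gamma^2 g^{kl}$, which is (\ref{eq:KPcondlocal}). With the fundamental form $\Omega(X,Y)=\tilde g(\J X,Y)$, the Hermitian property gives $\Omega(Y,X)=\tilde g(\J Y,X)=\tilde g(\J^2 Y,\J X)=-\tilde g(Y,\J X)=-\Omega(X,Y)$, so $\Omega$ is a genuine $2$-form; a direct coordinate computation then gives $\Omega_{ab}=-\gamma^{-2}\theta_{ab}$, and contracting with $\theta^{bc}$ while invoking (\ref{eq:KPcondlocal}) a final time gives $\Omega_{ab}\theta^{bc}=\delta_a^c$, so that $\Omega$ is exactly the $2$-form inverse to the bivector $\theta$.

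Finally, since $\theta$ is a Poisson bivector it satisfies the Jacobi identity, and I would invoke the classical fact that a non-degenerate bivector satisfies the Jacobi identity precisely when its inverse $2$-form is closed; this gives $d\Omega=0$, so $(\Sigma,\tilde g,\J)$ carries a compatible almost complex structure and metric with closed fundamental form and is therefore almost K\"ahler. The step I expect to require the most care is this last one: one must pass cleanly from ``$\theta$ obeys the Jacobi identity'' to ``$d\Omega=0$'' even though $\Omega$ differs from $\theta$ by index lowering and the factor $\gamma^{-2}$. The cleanest route, rather than differentiating $\Omega$ directly, is the one indicated above --- first establish $\Omega_{ab}\theta^{bc}=\delta^c_a$ so that $\Omega$ is literally the inverse matrix field of $\theta$, and then apply the standard symplectic--Poisson duality verbatim.
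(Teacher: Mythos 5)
Your proposal is correct and follows essentially the same route as the paper: verify $\J^2=-\mathds{1}$ and the hermiticity of $\tilde g=\gamma^{-1}g$ directly from (\ref{eq:KPcondlocal}), identify $\Omega_{ab}=-\gamma^{-2}\theta_{ab}$ as the inverse $2$-form of $\theta$, and conclude $d\Omega=0$ from the Jacobi identity via the standard symplectic--Poisson duality. Your additional remarks on the non-degeneracy of $\theta$ and the antisymmetry of $\Omega$ are details the paper leaves implicit, but they do not change the argument.
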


\section{Differential geometry of embedded \KP\ structures}\label{sec:diffgeoKP}

\noindent In the following we shall assume that $\Sigma$ is a submanifold of
$M$ and that there exists an almost \KP\ structure $\theta$ on
$\Sigma$ with characteristic function $\gamma^2$. The submanifold
$\Sigma$ is embedded in $M$ via the embedding coordinates
$x^1,\ldots,x^m$.  Our main goal is to express geometric properties of
$\Sigma$ in terms of Poisson brackets of embedding coordinates (and
components of normal vectors) and the geometric objects of the ambient
manifold $M$, such as the metric $\eta_{ij}$ and the Christoffel
symbols $\Gammab^{i}_{jk}$. In particular, derivatives should only
appear as Poisson brackets.

Let us define
\begin{align}
  \P^{ij} = \pb{x^i,x^j},
\end{align}
which will also be considered as a map $\P:TM\to TM$ through
$\P(X)=\P^{ij}\eta_{jk}X^k\d_i$. Furthermore, for all $u\in
C^\infty(\Sigma)$ we introduce
\begin{align}
  \D^i(u) = \frac{1}{\gamma^2}\{u,x^k\}\{x^i,x^l\}\eta_{kl}
\end{align}
as well as
\begin{align}
  \D^{ik} = \D^i(x^k)\qquad\nablah^i = \D^{ik}\nablab_k
  \qquad\nablah_XY=X_i\nablah^iY,  
\end{align}
where $\nablab$ is the covariant derivative on $M$. As opposed to
$\nablab$, the derivative $\nablah$ can be expressed in
terms of Poisson brackets; i.e.
\begin{align}
  \nablah^iX^k = \D^i(X^k)+\D^i(x^l)\Gammab^k_{lm}X^m.
\end{align}
For upcoming calculations, it is convenient to note that
\begin{equation}\label{eq:Diformula}
  \begin{split}
    \D^i(u) &= \frac{1}{\gamma^2}\theta^{ab}\paraa{\d_ax^i}\paraa{\d_bx^k}\eta_{kl}
    \theta^{pq}\paraa{\d_pu}\paraa{\d_qx^l}\
    =\frac{1}{\gamma^2}\theta^{ab}\theta^{pq}g_{bq}\paraa{\d_ax^i}\paraa{\d_pu}\\
    &=g^{ap}\paraa{\d_ax^i}\paraa{\d_p u},
  \end{split}
\end{equation}
which is independent of $\theta$.  

Let us gather some properties of $\P$ in the next proposition.

\begin{proposition}\label{prop:Pproperties}
  Let $\J=\gamma^{-1}\theta$ denote the associated almost complex structure on
  $\Sigma$. For all $X\in TM$ and $Y\in\TSigma$ it holds that
  \begin{align}
    &\P(Y) = \gamma\J(Y)\label{eq:PYeqJ}\\
    &\P^2(X) = -\gamma^2\eta(X,e_a)g^{ab}e_b\label{eq:PsqX}\\
    &\Tr\P^2\equiv(\P^2)^i_i = -n\gamma^2\label{eq:TrPsq}.
  \end{align}
  In particular, it follows that $-\gamma^{-2}\P^2$ is the orthogonal
  projection onto $\TSigma$.
\end{proposition}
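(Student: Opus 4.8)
The plan is to reduce all three identities, and the projection claim, to the defining local relation (\ref{eq:KPcondlocal}) together with the antisymmetry of $\theta^{ab}$. First I would unwind the definitions. Since $\P^{ij}=\{x^i,x^j\}=\theta^{ab}(\d_ax^i)(\d_bx^j)$ and $(e_b)^i=\d_bx^i$, we have $(\d_bx^j)\eta_{jk}X^k=\eta(e_b,X)$ for every $X\in TM$, and hence
\[
  \P(X)^i=\theta^{ab}\,\eta(e_b,X)\,(\d_ax^i),
\]
that is, $\P(X)=\theta^{ab}\eta(e_b,X)e_a$. In particular $\P$ always takes values in $\TSigma$, which is one half of the final assertion; it then remains to show that $-\gamma^{-2}\P^2$ restricts to the identity on $\TSigma$.

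For (\ref{eq:PYeqJ}) I would take $Y=Y^ae_a\in\TSigma$, so that $\eta(e_b,Y)=g_{bc}Y^c$, and read off $\P(Y)=\theta^{ab}g_{bc}Y^ce_a={\theta^a}_cY^ce_a=\gamma{\J^a}_cY^ce_a=\gamma\J(Y)$, using ${\J^a}_b=\gamma^{-1}{\theta^a}_b=\gamma^{-1}\theta^{ac}g_{cb}$. For (\ref{eq:PsqX}) I would apply the displayed formula twice: $\P(X)=\theta^{ef}\eta(e_f,X)e_e$, so $\eta(e_d,\P(X))=g_{de}\theta^{ef}\eta(e_f,X)$, and therefore $\P^2(X)=\theta^{cd}g_{de}\theta^{ef}\eta(e_f,X)e_c$. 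This is the single point where the \KP\ hypothesis is used: by antisymmetry $\theta^{ef}=-\theta^{fe}$, and then (\ref{eq:KPcondlocal}) collapses $\theta^{cd}\theta^{fe}g_{de}$ to $\gamma^2g^{cf}$, giving $\P^2(X)=-\gamma^2g^{cf}\eta(e_f,X)e_c$, which is (\ref{eq:PsqX}) after relabelling. Taking the trace and using $g_{ab}=\eta_{ij}(\d_ax^i)(\d_bx^j)$ and $g^{ab}g_{ab}=n$ gives $(\P^2)^i_i=-n\gamma^2$, which is (\ref{eq:TrPsq}).

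For the final statement I would set $P=-\gamma^{-2}\P^2$, so that (\ref{eq:PsqX}) reads $P(X)=g^{ab}\eta(e_b,X)e_a$, and then check the four defining properties of an orthogonal projection onto $\TSigma$: the image of $P$ lies in $\TSigma$; $P$ is the identity on $\TSigma$, since $X=X^ce_c$ gives $P(X)=g^{ab}g_{bc}X^ce_a=X$; $P$ kills $\TSigma^\perp$, since there $\eta(e_b,X)=0$; and $P$ is $\eta$-symmetric, since $\eta(P(X),Y)=g^{ab}\eta(e_b,X)\eta(e_a,Y)$ is visibly symmetric in $X$ and $Y$. (Idempotency $P^2=P$ also follows directly: $\P^2$ takes values in $\TSigma$ and acts there as $-\gamma^2\cdot\mathrm{id}$, so $\P^4=-\gamma^2\P^2$.)

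The whole argument is bookkeeping with the tangential frame $\{e_a\}$; the only genuine content — and the only place where one uses that $\theta$ is an almost \KP\ structure rather than an arbitrary Poisson bivector — is the collapse of $\theta\,g\,\theta$ to $\gamma^2g^{-1}$ in the proof of (\ref{eq:PsqX}). The one thing to be careful about there is invoking the antisymmetry of $\theta^{ab}$ before matching the index pattern of (\ref{eq:KPcondlocal}), since skipping that step introduces a spurious sign.
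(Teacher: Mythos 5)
Your proof is correct and follows essentially the same route as the paper: unwind $\P$ in the tangential frame, use $\eta(e_b,Y)=g_{bc}Y^c$ for (\ref{eq:PYeqJ}), and apply antisymmetry of $\theta$ together with (\ref{eq:KPcondlocal}) to collapse $\theta\,g\,\theta$ in (\ref{eq:PsqX}), with (\ref{eq:TrPsq}) by tracing. The only difference is that you spell out the verification that $-\gamma^{-2}\P^2$ is the orthogonal projection (image, identity on $\TSigma$, kernel $\TSigma^\perp$, $\eta$-symmetry), which the paper leaves as an immediate consequence; your sign bookkeeping at the antisymmetry step is exactly right.
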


\begin{proof}
  Let us start by proving (\ref{eq:PYeqJ}). By definition we obtain
  \begin{align*}
    \P(Y) &= \theta^{ab}\paraa{\d_ax^i}\paraa{\d_bx^j}Y_j\d_i
    =\theta^{ab}\paraa{\d_ax^i}\paraa{\d_bx^j}Y^c\paraa{\d_cx^k}\eta_{jk}\d_i\\
    &=\theta^{ab}\paraa{\d_a x^i}g_{bc}Y^c\d_i
    =\theta^a_c\paraa{\d_ax^i}Y^c\d_i
    =\theta^a_cY^c\d_a = \gamma\J(Y).
  \end{align*}
  Let us also prove formula (\ref{eq:PsqX}). One obtains
  \begin{align*}
    \P^2(X) &=
    \theta^{ab}\paraa{\d_ax^i}\paraa{\d_bx^j}\eta_{jk}
    \theta^{pq}\paraa{\d_px^k}\paraa{\d_qx^l}X_l\d_i\\
    &=\theta^{ab}\theta^{pq}g_{bp}\paraa{\d_ax^i}\paraa{\d_qx^l}X_l\d_i,
  \end{align*}
  and by using the fact that $\theta$ is an almost \KP\ structure one
  gets
  \begin{align*}
    \P^2(X) &= -\gamma^2g^{aq}\paraa{\d_ax^i}\paraa{\d_qx^l}X_l\d_i\\
    &=-\gamma^2g^{aq}g\paraa{e_q,X}e_a.
  \end{align*}
  Formula (\ref{eq:TrPsq}) can be proven in a similar way.
\end{proof}

\noindent The above result shows that $\gamma^{-1}\P$ is an extension
of the complex structure of $\TSigma$ to $TM$ such that
$-\gamma^{-2}\P^2$ is the orthogonal projection onto $\TSigma$.  Note
that
\begin{align}
  -\frac{1}{\gamma^2}(\P^2)^{ij} = \D^{ij}
\end{align}
and that the projection $\Pi$, onto $\TSigma^\perp$, can then be written as
\begin{align}\label{eq:Piprojection}
  \Pi^{ij} =
  \sum_{A=1}^pN_A^iN_A^j=\eta^{ij}-\D^{ij}.
\end{align}
For convenience, we shall also consider the map $\D:TM\to TM$, defined
as $\D(X)=\D^i(x^k)X_k\d_i$. It follows from Proposition
\ref{prop:Pproperties} that $\nablah_X$ coincides with $\nablab_X$ for
all $X\in\TSigma$. Namely
\begin{align}
  \nablah_X = X_i\D^i(x^k)\nablab_k = X^k\nablab_k = \nablab_X,
\end{align}
since $X_i\D^i(x^k)=X^k$ for all $X\in\TSigma$. Having at hand the
covariant derivative in the direction of a vector in $\TSigma$,
together with the projection operator, enables us to obtain the
covariant derivative on $\Sigma$ as
\begin{align}\label{eq:nablaXYProjection}
  \nabla_XY = \D\paraa{\nablah_XY},
\end{align}
which can again be written in terms of Poisson brackets. In the same
way, this gives us the second fundamental form as
\begin{align}\label{eq:SFFPb}
  \alpha(X,Y) = \Pi\paraa{\nablah_XY}.
\end{align}
Apart from $\P$, there is another fundamental object given as
\begin{align}
  &\B_A^{ij} = -\gamma^2\nablah^iN_A^j
\end{align}
and one notes that $\B_A$ can be written in terms of Poisson brackets
as
\begin{align*}
  \B_A^{ij} &= \{x^i,x^k\}\eta_{kl}\{x^l,N_A^j\}
  +\{x^i,x^k\}\eta_{kl}\{x^l,x^{m_1}\}\Gammab^j_{m_1m_2}N_A^{m_2}.
\end{align*}

\noindent Just as $\gamma^{-1}\P$ is an extension of the almost
complex structure on $\TSigma$ to $TM$, the map $\gamma^{-2}\B_A$ is
an extension of the Weingarten map.

\begin{proposition}\label{prop:BAprop}
  For $X\in TM$ it holds that
  \begin{align}
    \B_A(X) = -\gamma^2\eta(X,\nablab_{e_a}N_A)g^{ab}e_b,
  \end{align}
  and for $Y\in\TSigma$ one obtains
  \begin{align}
    &\B_A(Y) = \gamma^2W_A(Y),
  \end{align}
  where $W_A$ is the Weingarten map associated to the normal vector
  $N_A$.
\end{proposition}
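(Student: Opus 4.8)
The plan is to unwind the definition $\B_A^{ij} = -\gamma^2\nablah^iN_A^j$ into a Poisson-bracket-free, purely metric expression using \eqref{eq:Diformula}, and then to recognize the resulting contractions as covariant derivatives along the coordinate frame $e_a = \paraa{\d_ax^i}\d_i$. Since $\nablah^i = \D^{ik}\nablab_k = \D^i(x^k)\nablab_k$ and $\D^i(x^k) = g^{ap}\paraa{\d_ax^i}\paraa{\d_px^k}$ by \eqref{eq:Diformula}, letting $\B_A$ act on $X\in TM$ --- i.e. contracting the free upper index against $X_j = \eta_{jk}X^k$, exactly as $\P$ and $\D$ act on vectors --- yields
\[
  \B_A(X) = -\gamma^2 g^{ap}\paraa{\d_ax^i}\paraa{\d_px^k}(\nablab_kN_A)^jX_j\,\d_i.
\]
Now $\paraa{\d_px^k}\nablab_k = \nablab_{e_p}$, while $(\nablab_{e_p}N_A)^jX_j = \eta\paraa{X,\nablab_{e_p}N_A}$ and $\paraa{\d_ax^i}\d_i = e_a$; after relabelling $p\mapsto a$, $a\mapsto b$ and using the symmetry of $g^{ab}$ this is exactly $\B_A(X) = -\gamma^2\eta\paraa{X,\nablab_{e_a}N_A}g^{ab}e_b$, which is the first assertion.

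For the second assertion I would restrict to $Y\in\TSigma$ and invoke the Weingarten formula \eqref{eq:WeingartenFormula}: $\nablab_{e_a}N_A = -W_A(e_a) + D_{e_a}N_A$ with $D_{e_a}N_A\in\TSigma^\perp$. Since $Y$ is tangential, the normal term drops out and $\eta\paraa{Y,\nablab_{e_a}N_A} = -\eta\paraa{Y,W_A(e_a)}$. Using Weingarten's equation $h_A(X',Y') = \eta\paraa{W_A(X'),Y'}$ together with the symmetry $h_{A,ab} = h_{A,ba}$ of the second fundamental form (equivalently, the $g$-self-adjointness of $W_A$) gives $\eta\paraa{Y,W_A(e_a)} = h_A(e_a,Y) = h_A(Y,e_a) = \eta\paraa{W_A(Y),e_a}$. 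Substituting back,
\[
  \B_A(Y) = \gamma^2\,\eta\paraa{W_A(Y),e_a}g^{ab}e_b = \gamma^2 W_A(Y),
\]
the last step because $W_A(Y)\in\TSigma$, so $W_A(Y) = (W_A(Y))^ce_c$ and $\eta\paraa{W_A(Y),e_a}g^{ab}e_b = (W_A(Y))^cg_{ca}g^{ab}e_b = (W_A(Y))^be_b$.

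I do not expect a genuine obstacle here; the argument is essentially bookkeeping with metric contractions. The two points that need a little care are (i) being consistent about how $\B_A$, viewed as a map $TM\to TM$, acts on a vector --- namely by lowering the second index with $\eta$, just as for $\P$ and $\D$ --- so that $(\nablab_kN_A)^jX_j$ correctly produces $\eta\paraa{X,\nablab_kN_A}$; and (ii) passing from $W_A(e_a)$ paired with $Y$ to $W_A(Y)$ paired with $e_a$, which is precisely where the symmetry of the second fundamental form enters. Once these are in place, the index relabelling and the identity $\eta(v,e_a)g^{ab}e_b = v$ for $v\in\TSigma$ finish both formulas; as a sanity check, the first formula has exactly the structure of \eqref{eq:PsqX} with $e_a$ replaced by $\nablab_{e_a}N_A$, consistent with $\gamma^{-2}\B_A$ extending the Weingarten map.
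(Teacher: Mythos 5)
Your proof is correct and follows essentially the same route as the paper: both unwind $\B_A^{ij}=-\gamma^2\nablah^iN_A^j$ via the identity $\D^{ik}=g^{ab}\paraa{\d_ax^i}\paraa{\d_bx^k}$ from (\ref{eq:Diformula}) to get the first formula, and then specialize to tangential $Y$ using Weingarten's equation. The only cosmetic difference is in the second step, where the paper quotes $h_{A,cb}=-\eta\paraa{e_c,\nablab_{e_b}N_A}$ and the component formula for $W_A$ directly, while you rederive the same fact from the Gauss--Weingarten decomposition and the symmetry of $h_A$; the content is identical.
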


\begin{proof}
  By using (\ref{eq:Diformula}) one obtains
  \begin{align*}
    \B_A(X) &= -\gamma^2\paraa{\nablah^iN_A^j}X_j\d_i
    = -\gamma^2g^{ab}\paraa{\d_ax^i}\paraa{\d_bx^k}\paraa{\nablab_kN_A^j}X_j\d_i\\
    &=-\gamma^2\eta(X,\nablab_{e_b}N_A)g^{ab}e_a,
  \end{align*}
  and for $Y\in\TSigma$ one gets
  \begin{align*}
    \B_A(Y) &= -\gamma^2Y^c\eta\paraa{e_c,\nablab_{e_b}N_A}g^{ab}e_a
    =\gamma^2Y^ch_{A,cb}g^{ab}e_a\\
    &=\gamma^2(W_A)^a_cY^ce_a = \gamma^2W_A(Y),
  \end{align*}
  from the definition of the Weingarten map.
\end{proof}

\noindent It turns out that the extension of the Weingarten map is
such that the action on normal vectors gives the covariant derivative
in $\TSigma^\perp$.

\begin{proposition}\label{prop:BAnormProp}
  For $X\in\TSigma$ it holds that
  \begin{align}
    \eta\paraa{\B_A(N_B),X} = -\gamma^2\paraa{D_X}_{AB},
  \end{align}
  where $(D_X)_{AB}$ is defined through $D_XN_A=(D_X)_{AB}N_B$.
\end{proposition}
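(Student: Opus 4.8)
The plan is to obtain this identity as an immediate consequence of Proposition \ref{prop:BAprop}, which already evaluates $\B_A$ on an arbitrary vector of $TM$; the only additional inputs needed are the Weingarten decomposition (\ref{eq:WeingartenFormula}) and the fact that $\{N_1,\ldots,N_p\}$ is orthonormal.

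First I would specialize the formula of Proposition \ref{prop:BAprop} to the argument $N_B$, which gives $\B_A(N_B) = -\gamma^2\,\eta\paraa{N_B,\nablab_{e_a}N_A}g^{ab}e_b$. Pairing with a tangent vector $X = X^c e_c\in\TSigma$ and using $\eta(e_b,X) = g_{bc}X^c$ together with $g^{ab}g_{bc} = \delta^a_c$ collapses this to $\eta\paraa{\B_A(N_B),X} = -\gamma^2\,\eta\paraa{N_B,\nablab_{e_a}N_A}X^a = -\gamma^2\,\eta\paraa{N_B,\nablab_XN_A}$, where the last equality uses that $\nablab$ is $C^\infty$-linear in its direction slot.

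Next I would insert the Weingarten formula $\nablab_XN_A = -W_A(X) + D_XN_A$. Since $W_A(X)\in\TSigma$ while $N_B\in\TSigma^\perp$, the term $\eta\paraa{N_B,W_A(X)}$ vanishes, leaving $\eta\paraa{\B_A(N_B),X} = -\gamma^2\,\eta\paraa{N_B,D_XN_A}$. Finally, writing $D_XN_A = (D_X)_{AC}N_C$ and using $\eta(N_B,N_C) = \delta_{BC}$ yields $\eta\paraa{N_B,D_XN_A} = (D_X)_{AB}$, which is the claimed formula.

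The computation is short, so there is no real obstacle here; the step I would be most careful about is the index bookkeeping in contracting $g^{ab}$ against $\eta(e_b,X)$, and recalling that the normal connection $D$ is tensorial in $X$, so that contracting the coordinate coefficients $(D_{e_a})_{AB}$ with $X^a$ indeed reproduces $(D_X)_{AB}$.
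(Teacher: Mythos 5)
Your proposal is correct and follows essentially the same route as the paper: both evaluate the formula of Proposition \ref{prop:BAprop} at $N_B$, contract with $X$ using $g^{ab}\eta(e_b,X)=X^a$ to get $-\gamma^2\eta(N_B,\nablab_XN_A)$, and then identify this with $-\gamma^2(D_X)_{AB}$ via Weingarten's formula and the orthogonality of $W_A(X)$ to $N_B$. The only cosmetic difference is the order in which Weingarten's formula is invoked.
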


\begin{proof}
  It holds that $(D_X)_{AB} = \eta(D_XN_A,N_B)$, and from Weingarten's
  formula one obtains
  \begin{align*}
     \paraa{D_X}_{AB} = \eta(\nablab_{X}N_A+W_A(Y),N_B)
     =\eta(\nablab_{X}N_A,N_B).
  \end{align*}
  On the other hand, one gets
  \begin{align*}
    \eta\paraa{B_A(N_B),X} &= 
    -\gamma^2\eta(N_B,\nablab_{e_a}N_A)g^{ab}\eta\paraa{e_b,X}
    =-\gamma^2X^c\eta(N_B,\nablab_{e_a}N_A)\delta^a_c\\
    &=-\gamma^2\eta(N_B,\nablab_XN_A)
    =-\gamma^2\paraa{D_X}_{AB},
  \end{align*}
  which proves the statement.
\end{proof}

\noindent Recall Weingarten's formula
\begin{align*}
  \nablab_XN_A = -W_A(X)+D_XN_A
\end{align*}
for all $X\in\TSigma$. Having both $W_A$ and $D_X$ expressed in terms
of $\B_A$ implies that Weingarten's formula gives a nontrivial
relation involving derivatives of normal vectors.
\begin{proposition}\label{prop:WeingartenConsequence}
  For all $X\in\TSigma$ is holds that
  \begin{align}\label{eq:WeingartenConsequence}
    \paraa{\nablah_iN_A^k}X_k = N_A^l\paraa{\nablah^k\Pi_{il}}X_k,
  \end{align}
  from which it follows that
  \begin{align}
    \nablah_kN_A^k = N_A^l\paraa{\nablah^i\Pi_{il}}.
  \end{align}
\end{proposition}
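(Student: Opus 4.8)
The plan is to decode Weingarten's formula $\nablab_XN_A = -W_A(X)+D_XN_A$ componentwise, replacing every piece by its $\B_A$-expression from the preceding propositions, and then recognize $\B_A$ as $-\gamma^2\nablah N_A$. Concretely, fix $X\in\TSigma$. Applying $\eta(\,\cdot\,,\cdot)$ and using $\nablab_XN_A = \nablah_XN_A$ (which holds for tangential $X$, just as $\nablah_X=\nablab_X$ on $\TSigma$), I would write, for an arbitrary $Z\in TM$,
\begin{align*}
  \eta(\nablah_XN_A,Z) = -\eta(W_A(X),Z) + \eta(D_XN_A,Z).
\end{align*}
Now I split $Z$ according to $TM = \TSigma\oplus\TSigma^\perp$ using the projections $-\gamma^{-2}\P^2$ and $\Pi$. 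For the tangential part, $-\gamma^2\eta(W_A(X),Z) = -\eta(\B_A(X),Z)$ by Proposition \ref{prop:BAprop}; for the normal part, $-\gamma^2\eta(D_XN_A,Z)$ is governed by Proposition \ref{prop:BAnormProp} (applied with the roles so that $\eta(\B_A(N_B),X) = -\gamma^2(D_X)_{AB}$). Reassembling, I expect both contributions to combine into $\eta(\B_A(X),Z) = \gamma^2\eta(W_A(X),Z) - \gamma^2\eta(D_XN_A,Z)$, i.e. exactly $-\gamma^2\eta(\nablab_XN_A,Z) = -\gamma^2\eta(\nablah_XN_A,Z)$, which is the defining relation of $\B_A$. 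The nontrivial content is that the left-hand side $\B_A(X)$ is \emph{manifestly tangential} (it lies in $\TSigma$ by Proposition \ref{prop:BAprop}), so computing $\B_A(X)$ two ways — directly, versus via the $\Pi$-projection — forces a relation.

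More directly in index form: the object $\nablah^iN_A^k$ has, a priori, no reason to be expressible without normal data, but $\B_A^{ij} = -\gamma^2\nablah^iN_A^j = \gamma^2(W_A)^{\cdot}$ acting on tangent vectors shows $\B_A(X)\in\TSigma$. Meanwhile, differentiating the identity $\Pi_{il}N_A^l = N_A^i$ (which just says $N_A$ is normal, using \eqref{eq:Piprojection}) along $\nablah^k$ gives $(\nablah^k\Pi_{il})N_A^l + \Pi_{il}(\nablah^kN_A^l) = \nablah^kN_A^i$, so $\Pi_{il}\nablah^kN_A^l = \nablah^kN_A^i - (\nablah^k\Pi_{il})N_A^l$. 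On the other hand, $\Pi_{il}\nablah^kN_A^l$ is the normal component of $\nablah^kN_A$, which by Gauss/Weingarten is exactly $\eta^{kl}(D_{e_l}N_A)_i$-type data; pairing against $X\in\TSigma$ kills it in the appropriate sense via $\eta(X,N_A)=0$. I would make this precise by contracting \eqref{eq:WeingartenConsequence}'s two sides with $X_k$ for $X\in\TSigma$: the claim $(\nablah_iN_A^k)X_k = N_A^l(\nablah^k\Pi_{il})X_k$ then says precisely that the $\TSigma^\perp$-part of $\nablah N_A$ contracted with a tangent vector vanishes — which is Proposition \ref{prop:BAnormProp} rewritten, since $\B_A(N_B)$ being tangential means $\eta(\B_A(N_B),N_C)=0$, i.e. $\nablah N_A$ has no $\TSigma^\perp\otimes\TSigma^\perp$ component when saturated with a tangent vector.

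For the second displayed formula, I would simply set the free index $i$ equal to the contracted index: taking $X_k \to$ and tracing, $\nablah_kN_A^k = N_A^l(\nablah^i\Pi_{il})$ follows from \eqref{eq:WeingartenConsequence} by contracting $i$ with the slot that $X$ occupied — one checks $\D^{ik}\d_k$-type contractions restrict correctly to $\TSigma$ so that replacing $X_k$ by the trace is legitimate (concretely, take $X = e_a$, multiply by $g^{ab}(\d_bx^i)$, and sum, using $g^{ab}(\d_ax^k)(\d_bx^i) = \D^{ki}$ and that $\nablah$ already carries the projection). The main obstacle I anticipate is purely bookkeeping: keeping straight which index of $\Pi_{il}$ and $\B_A^{ij}$ is tangential versus free, and making sure the derivative $\nablah^k$ (which is $\D^{kj}\nablab_j$) is applied to the correct factor so that the product rule on $\Pi_{il}N_A^l = N_A^i$ produces \eqref{eq:WeingartenConsequence} with the signs shown. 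No hard estimate or clever construction is needed — once the tangential/normal decomposition is set up and the earlier propositions are quoted, both formulas drop out of the product rule applied to $\Pi N_A = N_A$.
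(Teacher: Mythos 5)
Your overall strategy (Weingarten's formula plus the product rule on $\Pi_{il}N_A^l=N_A^i$) is the paper's strategy, and your derivation of the second displayed formula from the first by contracting with $\D^i_k$ is correct. But the way you pin down the remaining content after the product rule contains a genuine error. The product rule gives $N_A^l\paraa{\nablah^k\Pi_{il}}X_k=\D_{il}\paraa{\nablah^kN_A^l}X_k$, so \eqref{eq:WeingartenConsequence} is equivalent to
\begin{align*}
  \paraa{\nablah_iN_A^k}X_k=\D_{il}\paraa{\nablah^kN_A^l}X_k ,
\end{align*}
i.e.\ to the statement that $-W_A(X)_i=-\gamma^{-2}\B_A(X)_i$ (the contraction on the \emph{second} index of $\nablah^iN_A^j$, via Proposition \ref{prop:BAprop}) equals the \emph{tangential projection} of $\nablab_XN_A$ (the contraction on the \emph{first} index). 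This transpose relation between the two contractions is exactly Weingarten's formula $\nablab_XN_A=-W_A(X)+D_XN_A$, equivalently the symmetry $h_{A,ab}=h_{A,ba}$; it does not ``drop out of the product rule applied to $\Pi N_A=N_A$'' as your last sentence claims. Your two attempts to identify this residual content both miss: (a) the ``reassembled'' identity $\eta(\B_A(X),Z)=-\gamma^2\eta(\nablab_XN_A,Z)$ is false for normal $Z=N_B$, since $\B_A(X)$ is tangential (left side $0$) while the right side is $-\gamma^2(D_X)_{AB}$ --- Proposition \ref{prop:BAnormProp} produces $\eta(\B_A(N_B),X)$, not $\eta(\B_A(X),N_B)$, and these differ; (b) the claim that \eqref{eq:WeingartenConsequence} ``says the $\TSigma^\perp$-part of $\nablah N_A$ contracted with a tangent vector vanishes'' is wrong: $\Pi_{il}\paraa{\nablah^kN_A^l}X_k=(D_XN_A)_i$, which does not vanish in general, while the statement $\eta(\B_A(N_B),N_C)=0$ you invoke is automatic (the derivative index of $\nablah^i$ is always tangential) and carries no information.

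The fix is what the paper does: write all three terms of Weingarten's formula through $\nablah$, namely $\nablab_XN_A^i=\paraa{\nablah^kN_A^i}X_k$, $W_A(X)^i=-\paraa{\nablah^iN_A^k}X_k$ (Proposition \ref{prop:BAprop}) and $D_XN_A^i=\Pi^i_l\paraa{\nablah^kN_A^l}X_k$ (Proposition \ref{prop:BAnormProp}), substitute, and only then apply the product rule to $\Pi^i_lN_A^l$; the terms $\paraa{\nablah^kN_A^i}X_k$ cancel and \eqref{eq:WeingartenConsequence} remains. As written, your argument would verify an identity that is either trivially true or false, rather than the one claimed.
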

\begin{proof}
  For $X\in\TSigma$ we have previously shown that the following holds
  \begin{align*}
    &\nablab_XN_A^i = \nablah_XN_A^i = \paraa{\nablah^kN_A^i}X_k\\
    &W_A(X)^i = -\paraa{\nablah^iN_A^k}X_k\\
    &D_XN_A^i = X_k(\nablah^kN_A^l)(N_B)_lN_B^i = \Pi^i_l(\nablah^kN_A^l)X_k.
  \end{align*}
  Therefore, Weingarten's formula can be rewritten as
  \begin{align*}
    \paraa{\nablah^kN_A^i}X_k &= 
    \paraa{\nablah^iN_A^k}X_k + \Pi^i_l(\nablah^kN_A^l)X_k\\
    &= \paraa{\nablah^iN_A^k}X_k  + \paraa{\nablah^k\Pi^i_lN_A^l}X_k
    -N_A^l\paraa{\nablah^k\Pi^i_l}X_k\\
    &= \paraa{\nablah^iN_A^k}X_k+\paraa{\nablah^kN_A^i}X_k
    -N_A^l\paraa{\nablah^k\Pi^i_l}X_k,
  \end{align*}
  from which it follows that
  \begin{align*}
    \paraa{\nablah_iN_A^k}X_k = N_A^l\paraa{\nablah^k\Pi_{il}}X_k.
  \end{align*}
  In particular, one may replace $X_k$ by $\D^i_k$, giving
  \begin{align*}
    \nablah_kN_A^k = N_A^l\paraa{\nablah^i\Pi_{il}},
  \end{align*}
  since $\D^i_k\nablah^k = \nablah^i$.
\end{proof}

\noindent Let us know turn to Gauss' equation and the curvature of
$\Sigma$. By using Weingarten's equation and Proposition
\ref{prop:BAprop} one finds the following formulas.

\begin{proposition}\label{prop:etaalphaalpha}
  For $X,Y,Z,V\in\TSigma$ it holds that
  \begin{align*}
    \eta\paraa{\alpha(X,Y),\alpha(Z,V)}
    &= \frac{1}{\gamma^4}\sum_{A=1}^p\eta\paraa{\B_A(X),Y}\eta\paraa{\B_A(Z),V}\\
    &= X_iY_jZ_kV_l\paraa{\nablah^j\Pi^{im}}\paraa{\nablah^l\Pi^k_m}.
  \end{align*}
\end{proposition}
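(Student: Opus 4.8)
The plan is to show that all three quantities appearing in the statement are equal to $\sum_{A=1}^p h_A(X,Y)\,h_A(Z,V)$, and hence to one another.

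First I would rewrite the left--hand side. By (\ref{eq:GaussFormulah}) one has $\alpha(X,Y)=\sum_A h_A(X,Y)N_A$, so orthonormality of $N_1,\dots,N_p$ immediately gives $\eta\paraa{\alpha(X,Y),\alpha(Z,V)}=\sum_A h_A(X,Y)h_A(Z,V)$. For the middle expression I would invoke Weingarten's equation $h_A(X,Y)=\eta(W_A(X),Y)$ together with Proposition \ref{prop:BAprop}, which says $\B_A(X)=\gamma^2 W_A(X)$ for $X\in\TSigma$; hence $h_A(X,Y)=\gamma^{-2}\eta(\B_A(X),Y)$ and likewise for the pair $(Z,V)$, and substituting into the sum above produces $\frac{1}{\gamma^4}\sum_A\eta(\B_A(X),Y)\eta(\B_A(Z),V)$, which is the first claimed equality.

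For the remaining equality I would use the expansion $\Pi^{ij}=\sum_A N_A^iN_A^j$ from (\ref{eq:Piprojection}). The first thing to note is that, since $Y\in\TSigma$, the contraction $Y_j\nablah^j$ coincides with the genuine covariant derivative $\nablab_Y$ along $\Sigma$ (because $\D^{ij}Y_j=Y^i$); in particular the derivative only ever acts on objects defined on $\Sigma$, so working with $\Pi$ and the $N_A$ causes no difficulty. Applying the Leibniz rule and contracting with $X_i$,
\begin{align*}
  X_iY_j\paraa{\nablah^j\Pi^{im}}
  = \sum_A\paraa{\eta(X,\nablab_Y N_A)\,N_A^m + \eta(X,N_A)\,(\nablab_Y N_A)^m}.
\end{align*}
Here the second term vanishes because $X\in\TSigma$ while $N_A\perp\TSigma$, and writing $\nablab_Y N_A=-W_A(Y)+D_Y N_A$ with $D_Y N_A\in\TSigma^\perp$ one gets $\eta(X,\nablab_Y N_A)=-\eta(X,W_A(Y))=-h_A(X,Y)$, by Weingarten's equation and symmetry of the second fundamental form. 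Thus $X_iY_j\nablah^j\Pi^{im}=-\sum_A h_A(X,Y)N_A^m$, and the identical computation gives $Z_kV_l\nablah^l\Pi^k_m=-\sum_A h_A(Z,V)(N_A)_m$; multiplying the two factors and using $N_A^m(N_B)_m=\delta_{AB}$ recovers $\sum_A h_A(X,Y)h_A(Z,V)$ once more.

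The argument is almost entirely index bookkeeping, so I do not expect a serious obstacle; the one place where care is genuinely needed is the vanishing of the cross terms $\eta(X,N_A)(\nablab_Y N_A)^m$ and $\eta(Z,N_A)(\nablab_V N_A)_m$ in the last step --- this is exactly where the hypothesis that $X$ and $Z$ (and not merely $Y$ and $V$) lie in $\TSigma$ enters.
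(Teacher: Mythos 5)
Your argument is correct. The first equality is proved exactly as in the paper: both of us combine Weingarten's equation $h_A(X,Y)=\eta(W_A(X),Y)$ with Proposition \ref{prop:BAprop}. For the second equality your route differs slightly from the paper's. The paper starts from $\alpha(X,Y)=\Pi(\nablah_XY)$ (eq. (\ref{eq:SFFPb})) to get $\eta(\alpha(X,Y),\alpha(Z,V))=X_iY_jZ_kV_l(\nablah^iN_A^j)(\nablah^kN_A^l)$ and then applies Proposition \ref{prop:WeingartenConsequence} twice to trade $\nablah^iN_A^j$ for $N_A^m\nablah^j\Pi^i_{\ m}$; you instead expand $\alpha(X,Y)=\sum_A h_A(X,Y)N_A$, reduce everything to $\sum_A h_A(X,Y)h_A(Z,V)$, and evaluate $X_iY_j\nablah^j\Pi^{im}$ directly by the Leibniz rule on $\Pi^{im}=\sum_A N_A^iN_A^m$ together with the Weingarten formula. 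In effect you re-derive inline the content of Proposition \ref{prop:WeingartenConsequence} in the only form needed here, namely $X_iY_j\nablah^j\Pi^{im}=-\sum_A h_A(X,Y)N_A^m$; this is self-contained and makes transparent where the hypothesis $X,Z\in\TSigma$ is used (killing the cross terms $\eta(X,N_A)(\nablab_YN_A)^m$), whereas the paper's version leans on the already-established lemma and on (\ref{eq:SFFPb}). Both computations are valid and yield the same identity.
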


\begin{proof}
  Since, by Proposition \ref{prop:BAprop}, $\B_A$ is proportional to
  $W_A$ when acting on vectors in $\TSigma$, the first equation
  follows directly from Weingarten's equation. 

  By using (\ref{eq:SFFPb}), and the fact that $X,Y,Z,V\in\TSigma$,
  one immediately obtains
  \begin{align*}
    \eta\paraa{\alpha(X,Y),\alpha(Z,V)}
    = X_iY_jZ_kV_l\paraa{\nablah^iN_A^j}\paraa{\nablah^kN_A^l},
  \end{align*}
  and by applying Proposition \ref{prop:WeingartenConsequence} twice
  one arrives at the desired result. 
\end{proof}

\noindent From Proposition \ref{prop:etaalphaalpha} and Gauss' equation
the following result is immediate.

\begin{proposition}\label{prop:curvature}
  Let $\Rb$ and $R$ be the curvature tensors of $M$ and $\Sigma$
  respectively. For $X,Y,Z,V\in\TSigma$ it holds that
  \begin{equation*}
    \begin{split}
      &R(X,Y,Z,V) =
      X^iY^jZ^kV^l\bracketb{\Rb_{ijkl}+
        \paraa{\nablah_k\Pi_{im}}\paraa{\nablah_l\Pi_j^m}
      -\paraa{\nablah_l\Pi_{im}}\paraa{\nablah_k\Pi^m_j}}\\
      &\!\!=\Rb(X,Y,Z,V)
      +\frac{1}{\gamma^4}\sum_{A=1}^p
      \bracketb{\eta\paraa{\B_A(X),Z}\eta\paraa{\B_A(Y),V}
      -\eta\paraa{\B_A(X),V}\eta\paraa{\B_A(Y),Z}}.
    \end{split}
  \end{equation*}
\end{proposition}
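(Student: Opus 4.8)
The plan is to start from Gauss' equation~\eqref{eq:GaussEquation}, which already expresses $R(X,Y,Z,V)$ in terms of $\Rb$ and two copies of the second fundamental form $\alpha$, and then simply insert Proposition~\ref{prop:etaalphaalpha} to rewrite the $\eta(\alpha,\alpha)$ terms. Concretely, Gauss' equation gives
\begin{align*}
  R(X,Y,Z,V) = \Rb(X,Y,Z,V) + \eta\paraa{\alpha(X,Z),\alpha(Y,V)}
  - \eta\paraa{\alpha(X,V),\alpha(Y,Z)},
\end{align*}
so the first task is to apply Proposition~\ref{prop:etaalphaalpha} to each of the two inner products. The first equality in that proposition immediately yields the $\B_A$-form of the answer, giving the second displayed line of the statement (with the sign and index pattern following from which arguments get paired). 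The second equality in Proposition~\ref{prop:etaalphaalpha} yields the $\nablah\Pi$-form.

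For the first displayed line I would write $X^iY^jZ^kV^l\Rb_{ijkl}$ for $\Rb(X,Y,Z,V)$ using the convention $R(X,Y,Z,V)=\eta(R(Z,V)Y,X)$ recalled after~\eqref{eq:GaussEquation}, and then expand the two $\eta(\alpha,\alpha)$ terms via Proposition~\ref{prop:etaalphaalpha}. The term $\eta(\alpha(X,Z),\alpha(Y,V))$ becomes $X_iZ_kY_jV_l(\nablah^j\Pi^{im})(\nablah^l\Pi^k_m)$; relabelling indices so that $X,Y,Z,V$ carry $i,j,k,l$ respectively, and using that $\Pi$ (hence $\nablah\Pi$) is symmetric in its two free indices, this matches the $(\nablah_k\Pi_{im})(\nablah_l\Pi_j{}^m)$ term; similarly the subtracted term $\eta(\alpha(X,V),\alpha(Y,Z))$ produces $(\nablah_l\Pi_{im})(\nablah_k\Pi^m_j)$. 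Raising/lowering of the outer $X,Y,Z,V$ indices is harmless since they are contracted with arbitrary tangent vectors. One small point worth checking is the sign: the $\nablah\Pi$ version in Proposition~\ref{prop:etaalphaalpha} carries a $+$ (two applications of Proposition~\ref{prop:WeingartenConsequence} each contributing a sign, which cancel), so both $\eta(\alpha,\alpha)$ terms transcribe with the same overall sign and the subtraction in Gauss' equation is preserved verbatim.

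Since all the substantive content has been pushed into Propositions~\ref{prop:etaalphaalpha} and~\ref{prop:curvature}'s predecessors, there is essentially no obstacle here beyond bookkeeping: the only thing that could go wrong is an index-placement or sign slip when translating between the $\eta(\B_A\cdot,\cdot)$ notation and the $\nablah\Pi$ notation, and between the ``$R(X,Y,Z,V)$'' argument order and the component expression $X^iY^jZ^kV^l(\cdots)_{ijkl}$. I would therefore present the proof in two lines: ``Combine Gauss' equation~\eqref{eq:GaussEquation} with the first equality of Proposition~\ref{prop:etaalphaalpha} to obtain the second expression; combine it with the second equality of Proposition~\ref{prop:etaalphaalpha} to obtain the first.'' The main care is just to make explicit that in the first line $\Rb_{ijkl}$ is defined with the same index convention as $R$, so that $\Rb(X,Y,Z,V)=X^iY^jZ^kV^l\Rb_{ijkl}$, and that $\nablah_k\Pi_{im}=\nablah^k$ acting with the metric doing the lowering, so the two forms of the curvature genuinely agree term by term.
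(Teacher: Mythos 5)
Your proposal is correct and is exactly the paper's route: the paper states that Proposition~\ref{prop:curvature} is immediate from Gauss' equation~\eqref{eq:GaussEquation} together with Proposition~\ref{prop:etaalphaalpha}, which is precisely the substitution and index relabelling you carry out. The bookkeeping you describe (pairing $\eta(\alpha(X,Z),\alpha(Y,V))$ and $\eta(\alpha(X,V),\alpha(Y,Z))$ with the two equalities of Proposition~\ref{prop:etaalphaalpha}) checks out.
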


\noindent To compute the Ricci curvature and the scalar curvature, one
needs to take the trace over $\TSigma$ of the tensor in Proposition
\ref{prop:curvature}. This can be done by applying projection
operators before tracing over $TM$. That is, the Ricci tensor of
$\Sigma$ can be computed as
$\R_{ik}=R_{ijkl}\D^{jm}\D^{l}_m=R_{ijkl}\D^{jl}$, which implies that
\begin{equation}\label{eq:RicciCurvature}
  \R_{ik}=\D^{jl}\Rb_{ijkl}+\paraa{\nablah_k\Pi_{im}}\paraa{\nablah_l\Pi^{lm}}
  -\paraa{\nablah_l\Pi_{im}}\paraa{\nablah_k\Pi^{lm}}.
\end{equation}
In the same way, the scalar curvature is computed to be
\begin{equation}
  R = \D^{jl}\D^{ik}\Rb_{ijkl}+\paraa{\nablah_k\Pi^{km}}\paraa{\nablah^l\Pi_{lm}}
  -\paraa{\nablah^l\Pi^{km}}\paraa{\nablah_k\Pi_{lm}},
\end{equation}
which (by using Proposition \ref{prop:WeingartenConsequence}) is equal to
\begin{equation}\label{eq:ScalarCurvature}
  R = \D^{jl}\D^{ik}\Rb_{ijkl}+\paraa{\nablah_k\Pi^{km}}\paraa{\nablah^l\Pi_{lm}}
  -\frac{1}{2}\paraa{\nablah^l\Pi^{km}}\paraa{\nablah_l\Pi_{km}}.
\end{equation}

\subsection{The Codazzi-Mainardi equations}

\noindent For submanifolds there are two fundamental sets of equations:
Gauss' equations and the Codazzi-Mainardi equations. Having considered
Gauss' equations in the previous section, let us now turn to the
Codazzi-Mainardi equations. These equations express the normal
component of the curvature in terms of covariant derivatives of the
second fundamental form and covariant derivatives of normal
vectors. That is, for all $X,Y,Z\in\TSigma$ it holds that
\begin{align*}
  \Pi\paraa{\Rb(X,Y)Z} &= 
  \sum_{A=1}^p\bracketb{\paraa{\nabla_Xh_A}(Y,Z)-\paraa{\nabla_Yh_A}(X,Z)}N_A\\
  &\qquad
  +\sum_{A=1}^p\bracketb{h_A(Y,Z)D_XN_A-h_A(X,Z)D_YN_A}.
\end{align*}

\noindent Let us now try to rewrite these equations in terms of
Poisson brackets. 

\begin{proposition}
  For $X,Y,Z\in\TSigma$ it holds that
  \begin{align*}
    (\nabla_Xh_A)(Y,Z)-(\nabla_Yh_A)(X,Z)    
    &=\eta\parab{\paraa{\nablah_X\gamma^{-2}\B_A}(Y)
      -\paraa{\nablah_Y\gamma^{-2}\B_A}(X),Z}\\
    &=X^iY^jZ^k\parab{\nablah_j\nablah_k(N_A)_i-\nablah_i\nablah_k(N_A)_j}.
  \end{align*}
\end{proposition}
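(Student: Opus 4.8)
The plan is to obtain both equalities by unwinding the covariant derivative of $h_A$ and exploiting that on $\TSigma$ the operator $\gamma^{-2}\B_A$ is the Weingarten map $W_A$ (Proposition \ref{prop:BAprop}), while $\nablah_X$ agrees with the ambient covariant derivative $\nablab_X$. First, since in the Codazzi--Mainardi equation the normal derivatives sit in the $D_XN_A$ terms, the quantity $(\nabla_Xh_A)(Y,Z)$ is the covariant derivative of $h_A$ viewed as a symmetric bilinear form on $\TSigma$, that is $X\paraa{h_A(Y,Z)}-h_A(\nabla_XY,Z)-h_A(Y,\nabla_XZ)$. Writing $h_A(Y,Z)=\eta\paraa{W_A(Y),Z}$ and using metric compatibility of $\nabla$, the $\nabla_XZ$-terms cancel, leaving $(\nabla_Xh_A)(Y,Z)=\eta\paraa{(\nabla_XW_A)(Y),Z}$ with $(\nabla_XW_A)(Y)=\nabla_X\paraa{W_A(Y)}-W_A(\nabla_XY)$.

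The core step is to relate $\nabla_XW_A$ to $\nablah_X\gamma^{-2}\B_A$. By Proposition \ref{prop:BAprop} we have $\gamma^{-2}\B_A(U)=W_A(U)$ for $U\in\TSigma$, and the Gauss formula (\ref{eq:GaussFormulah}) gives $\nablab_XU=\nabla_XU+\alpha(X,U)$. Since $\nablah_X=\nablab_X$ on $\TSigma$, applying the Leibniz rule to the $(1,1)$-tensor $\gamma^{-2}\B_A$ yields
\begin{align*}
  (\nablah_X\gamma^{-2}\B_A)(Y)
  &=\nablah_X\paraa{\gamma^{-2}\B_A(Y)}-\gamma^{-2}\B_A\paraa{\nablah_XY}\\
  &=(\nabla_XW_A)(Y)+\alpha\paraa{X,W_A(Y)}-\gamma^{-2}\B_A\paraa{\alpha(X,Y)}.
\end{align*}
Pairing with $Z\in\TSigma$ annihilates the term $\alpha\paraa{X,W_A(Y)}\in\TSigma^\perp$, so that $\eta\paraa{(\nablah_X\gamma^{-2}\B_A)(Y),Z}=(\nabla_Xh_A)(Y,Z)-\eta\paraa{\gamma^{-2}\B_A(\alpha(X,Y)),Z}$. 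Subtracting the same identity with $X$ and $Y$ interchanged, the last term cancels because $\alpha$ is symmetric, and the first asserted equality drops out.

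For the index form, recall from $\B_A^{ij}=-\gamma^2\nablah^iN_A^j$ that, as a $(1,1)$-tensor, $\gamma^{-2}(\B_A)_{ij}=-\nablah_i(N_A)_j$. Since $X_i\D^{ij}=X^j$ for $X\in\TSigma$, applying $\nablah_X$ and using metric compatibility gives $\eta\paraa{(\nablah_X\gamma^{-2}\B_A)(Y),Z}=-X^iY^jZ^k\,\nablah_i\nablah_k(N_A)_j$; subtracting the version with $X$ and $Y$ swapped and relabelling dummy indices produces $X^iY^jZ^k\paraa{\nablah_j\nablah_k(N_A)_i-\nablah_i\nablah_k(N_A)_j}$, which is the second asserted equality.

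The main obstacle is the comparison step: although $\gamma^{-2}\B_A$ restricts to $W_A$ on $\TSigma$, the tensor $\nablah_X\gamma^{-2}\B_A$ is \emph{not} equal to $\nabla_XW_A$, because $\nablab_XY$ has a nonzero component $\alpha(X,Y)$ normal to $\Sigma$ and $\B_A$ acts nontrivially on $\TSigma^\perp$ through the normal connection. One must keep both correction terms and observe that they vanish in the end --- the first after pairing with $Z\in\TSigma$ (it is normal), the second after antisymmetrizing in $X$ and $Y$ (it is symmetric in $X,Y$ since $\alpha$ is). Everything else is routine index manipulation using $\nablah_X=\nablab_X$ on $\TSigma$.
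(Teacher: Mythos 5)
Your proposal is correct and follows essentially the same route as the paper: rewrite $(\nabla_Xh_A)(Y,Z)=\eta\paraa{(\nabla_XW_A)(Y),Z}$ via Weingarten's equation, compare $\nablah_X\gamma^{-2}\B_A$ with $\nabla_XW_A$ picking up the correction term $\gamma^{-2}\B_A(\alpha(X,Y))$, and kill it by antisymmetrizing in $X,Y$. The only difference is that you derive the comparison identity explicitly via the Leibniz rule and Gauss' formula (tracking also the normal term $\alpha(X,W_A(Y))$ that dies against $Z$), whereas the paper states it directly.
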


\begin{proof}
  Let us start by using Weingarten's equation to rewrite
  \begin{align*}
    \paraa{\nabla_Xh_A}(Y,Z) 
    &= X\cdot h_A(Y,Z)-h_A(\nabla_XY,Z)-h_A(Y,\nabla_XZ)\\
    &= X\cdot g(W_A(Y),Z)-g(W_A(\nabla_XY),Z)-g(W_A(Y),\nabla_XZ)\\
    &= g\paraa{(\nabla_XW_A)(Y),Z}.
  \end{align*}
  On the other hand, one gets
  \begin{align*}
    \eta\paraa{(\nablab_X\gamma^{-2}\B_A)(Y),Z}
    =\eta\paraa{(\nabla_XW_A)(Y),Z}
    -\frac{1}{\gamma^2}\eta\paraa{\B_A(\alpha(X,Y)),Z},
  \end{align*}
  which implies that
  \begin{align*}
    (\nabla_Xh_A)(Y,Z)-(\nabla_Yh_A)(X,Z)    
    &=\eta\parab{\paraa{\nablah_X\gamma^{-2}\B_A}(Y)
      -\paraa{\nablah_Y\gamma^{-2}\B_A}(X),Z}
  \end{align*}
  since $\nablab_X=\nablah_X$ for all $X\in\TSigma$. The second
  formula is obtained by simply inserting the definition of $\B_A$ in
  the above expression.
\end{proof}

\begin{proposition}
  For $X,Y,Z\in\TSigma$ it holds that
  \begin{align*}
    &\sum_{B=1}^p\bracketb{h_B(Y,Z)(D_X)_{BA}-h_B(X,Z)(D_Y)_{BA}}
    =X_iY_jZ_k\bracketb{
      \nablah^k\nablah^iN_A^j-\nablah^k\nablah^jN_A^i
    }.
  \end{align*}
\end{proposition}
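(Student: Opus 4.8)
The plan is to rewrite \emph{both} sides as contractions of the second covariant derivative of $N_A$ on $\Sigma$, evaluated at a point with a convenient frame, and then to match them by a short computation with Weingarten's formula. Fix $p\in\Sigma$ and extend the tangent vectors $X,Y,Z$ to vector fields on $\Sigma$ that are parallel at $p$ (so $\nabla_ZX=\nabla_ZY=0$ at $p$; more is not needed); all identities below are read at $p$.

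The first and main step is to establish
\[
  X_iY_jZ_k\,\nablah^k\nablah^iN_A^j=\eta\paraa{\nablab_Z\nablab_XN_A,Y}.
\]
Here one uses that $Z_k\nablah^k=\nablab_Z$ on tensors along $\Sigma$ (because $Z\in\TSigma$ and $\D$ is the tangential projection), that $X_i\nablah^iN_A^j=(\nablab_XN_A)^j$ for $X\in\TSigma$ (the identity $\nablah_X=\nablab_X$ for $X\in\TSigma$ noted earlier), and the Leibniz rule for $\nablah^k$. Leibniz produces two correction terms: the one from $\nablab_ZX$ vanishes because contracting the \emph{normal} vector $\alpha(Z,X)$ with $\nablah^i$ only picks out its (vanishing) tangential part, while the one from $\nablab_ZY$ cancels precisely against the term $\eta(\nablab_XN_A,\nablab_ZY)$ produced when $\nablab_Z$ differentiates $Y$ inside $\eta(\nablab_XN_A,Y)$. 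Interchanging $X$ and $Y$, the right-hand side of the proposition then becomes $\eta(\nablab_Z\nablab_XN_A,Y)-\eta(\nablab_Z\nablab_YN_A,X)$.

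For the second step, insert Weingarten's formula $\nablab_XN_A=-W_A(X)+D_XN_A$ and differentiate once more along $Z$. Pairing $-\nablab_Z(W_A(X))$ with $Y\in\TSigma$ yields $-g\paraa{\nabla_Z(W_AX),Y}=-Z\cdot h_A(X,Y)$, using the parallel frame and $g(W_AX,Y)=h_A(X,Y)$; this is symmetric in $X,Y$, hence cancels in the antisymmetrization. Pairing $\nablab_Z(D_XN_A)$ with $Y$ and writing $D_XN_A=(D_X)_{AB}N_B$ gives $-\sum_B(D_X)_{AB}h_B(Z,Y)$, which by skew-symmetry of the normal connection, $(D_X)_{AB}=-(D_X)_{BA}$, together with symmetry of $h_B$ equals $\sum_Bh_B(Y,Z)(D_X)_{BA}$. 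Subtracting the $X\leftrightarrow Y$ version reproduces exactly the left-hand side.

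The hard part is the first step: showing that only the genuine second-derivative term $\eta(\nablab_Z\nablab_XN_A,Y)$ survives the Leibniz expansion. This is the same cancellation seen in the proof of the preceding proposition, where the term $\eta\paraa{\B_A(\alpha(X,Y)),Z}$ dropped out of the antisymmetrization; the care required is in tracking which index contractions land in $\TSigma$ and which in $\TSigma^\perp$. Once that identity is in hand, the remainder is a routine application of Weingarten's formula together with the antisymmetry in $X$ and $Y$.
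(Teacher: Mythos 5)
Your proof is correct, but it takes a genuinely different route from the paper's. You work from the right-hand side: exploiting that both sides are tensorial in $X,Y,Z$, you choose a frame parallel at a point and convert $X_iY_jZ_k\nablah^k\nablah^iN_A^j$ back into the classical iterated ambient derivative $\eta(\nablab_Z\nablab_XN_A,Y)$ --- the two cancellations you single out (the $\alpha(Z,X)$ correction killed by the tangential projection hidden in $\nablah^i$, and the two copies of $\eta(\nablab_XN_A,\alpha(Z,Y))$ cancelling each other) are exactly right --- and then finish with Weingarten's formula, the symmetry of $h_A$, and the skew-symmetry $(D_X)_{AB}=-(D_X)_{BA}$. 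The paper instead starts from the left-hand side: it substitutes the $\nablah$/$\Pi$ expressions for $h_B(Y,Z)$ and $(D_X)_{BA}$ coming from (\ref{eq:SFFPb}) and Proposition \ref{prop:BAnormProp}, carries out the sum over $B$ via $\sum_BN_B^i(N_B)_m=\Pi^i_m$, and then repeatedly applies Leibniz, $\Pi^i_k=\delta^i_k-\D^i_k$ and Proposition \ref{prop:WeingartenConsequence} to shuffle the projectors until the double-$\nablah$ expression emerges. Your argument is shorter and makes the geometric content transparent (it is essentially the textbook derivation of the normal-connection part of the Codazzi equation), at the cost of stepping outside the $\nablah$-calculus; the paper's version stays entirely inside that calculus, which is what it needs later when these formulas are promoted to definitions in the purely algebraic setting of Section \ref{sec:KPalgebras}.
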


\begin{proof}
  Since $h_A(Y,Z)=\eta(\alpha(Y,Z),N_A)$ it follows from
  (\ref{eq:SFFPb}) that 
  \begin{align*}
    h_B(Y,Z) = \Pi^k_iZ_l\paraa{\nablah^lY_k}N_B^i,
  \end{align*}
  and from Proposition \ref{prop:BAnormProp} that
  \begin{align*}
    \paraa{D_X}_{BA} = \paraa{\nablah^iN_B^k}X_i(N_A)_k
    =-\paraa{\nablah^iN_A^k}X_i(N_B)_k.
  \end{align*}
  Thus, one obtains
  \begin{align}
    \sum_{B=1}^ph_B(Y,Z)(D_X)_{BA} &=
    -\Pi^{ik}\Pi_{im}X_jZ_l\paraa{\nablah^lY_k}\paraa{\nablah^jN_A^m}\notag\\
    &=X_jZ_lY_k\paraa{\nablah^l\Pi^k_m}\paraa{\nablah^jN_A^m},\label{eq:eq1}
  \end{align}
  and using the definition of $\Pi^{ij}$ one gets
  \begin{align*}
    X_jZ_lY_k\paraa{\nablah^l\Pi^k_m}\paraa{\nablah^jN_A^m}
    &= X_jZ_lY_k\nablah^l\paraa{\Pi^k_m\nablah^jN_A^m}\\
   &=X_jZ_lY_k\parab{\nablah^l\nablah^jN_A^k-\nablah^l\paraa{\D^k_m\nablah^jN_A^m}}.
  \end{align*}
  Now, let us rewrite the second term using Proposition
  \ref{prop:WeingartenConsequence}.
  \begin{align*}
    X_jZ_lY_k\nablah^l\paraa{\D^k_m\nablah^jN_A^m} &=
    X_jZ_lY_k\nablah^l\paraa{\D^k_mN_A^n\nablah^m\Pi^j_n}=
    X_jZ_lY_k\nablah^l\paraa{N_A^n\nablah^k\Pi^j_n}\\
    &= X_jZ_lY_k\parab{\nablah^l\nablah^kN_A^j
      -\nablah^l\paraa{\Pi^j_n\nablah^kN_A^n}}\\
    &=X_jZ_lY_k\parab{\nablah^l\nablah^kN_A^j
    -\paraa{\nablah^l\Pi^j_n}\paraa{\nablah^kN_A^n}}.
  \end{align*}
  Comparing this with (\ref{eq:eq1}) one obtains
  \begin{align*}
    &\sum_{B=1}^p\bracketb{h_B(Y,Z)(D_X)_{BA}-h_B(X,Z)(D_Y)_{BA}}
    = X_jZ_lY_k\parab{\nablah^l\nablah^jN_A^k-\nablah^l\nablah^kN_A^j},
  \end{align*}
  which is equal to the stated formula.
\end{proof}

\noindent Since
$\eta\paraa{N_A,\Pi(\Rb(X,Y)Z)}=-\Rb_{ijkl}X^iY^jZ^kN_A^l$ one can now
formulate the Codazzi-Mainardi equations in the following way.

\begin{proposition}[The Codazzi-Mainardi equations]\label{prop:CMequations}
  \textrm{ }\\ For all $X,Y,Z\in\TSigma$ it holds that
  \begin{align*}
    X^iY^jZ^kN_A^l\Rb_{ijkl} &=
    X_iY_jZ_k\bracketb{\nablah^i\nablah^kN_A^j - \nablah^j\nablah^kN_A^i
      +\nablah^k\nablah^jN_A^i-\nablah^k\nablah^iN_A^j
    }.
  \end{align*}
\end{proposition}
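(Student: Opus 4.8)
The plan is to obtain the identity by contracting the classical Codazzi--Mainardi equation (recalled just above the statement) with a normal vector $N_A$, and then substituting the two propositions proved earlier in this subsection. All the real content sits in those two propositions; what remains is bookkeeping with signs and index positions.

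First I would take the $\eta$-inner product of both sides of the Codazzi--Mainardi equation with $N_A$. Because $\{N_1,\dots,N_p\}$ is orthonormal, the sum $\sum_B\bracketb{(\nabla_Xh_B)(Y,Z)-(\nabla_Yh_B)(X,Z)}N_B$ collapses to the single term $(\nabla_Xh_A)(Y,Z)-(\nabla_Yh_A)(X,Z)$, while $\eta(D_XN_B,N_A)=(D_X)_{BA}$ turns the second sum into $\sum_B\bracketb{h_B(Y,Z)(D_X)_{BA}-h_B(X,Z)(D_Y)_{BA}}$. For the left-hand side I would use the relation $\eta\paraa{N_A,\Pi(\Rb(X,Y)Z)}=-\Rb_{ijkl}X^iY^jZ^kN_A^l$ noted immediately before the proposition. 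Next I feed in the two preceding propositions: the first replaces $(\nabla_Xh_A)(Y,Z)-(\nabla_Yh_A)(X,Z)$ by $X^iY^jZ^k\paraa{\nablah_j\nablah_k(N_A)_i-\nablah_i\nablah_k(N_A)_j}$, and the second replaces $\sum_B\bracketb{h_B(Y,Z)(D_X)_{BA}-h_B(X,Z)(D_Y)_{BA}}$ by $X_iY_jZ_k\bracketb{\nablah^k\nablah^iN_A^j-\nablah^k\nablah^jN_A^i}$. Multiplying the resulting equation through by $-1$ puts $\Rb_{ijkl}X^iY^jZ^kN_A^l$ on the left and, on the right, the sum of $X^iY^jZ^k\paraa{\nablah_i\nablah_k(N_A)_j-\nablah_j\nablah_k(N_A)_i}$ and $X_iY_jZ_k\bracketb{\nablah^k\nablah^jN_A^i-\nablah^k\nablah^iN_A^j}$.

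The final step is purely cosmetic: one must rewrite the first group with $X_iY_jZ_k$ downstairs and the components of $N_A$ upstairs, to match the form in the statement. This is legitimate since $\nablah^i=\D^{ik}\nablab_k$ and $\nablab$ is the Levi--Civita connection of $(M,\eta)$, so $\nablah$ commutes with raising and lowering indices by $\eta$; hence $X^iY^jZ^k\nablah_i\nablah_k(N_A)_j=X_iY_jZ_k\nablah^i\nablah^kN_A^j$, and similarly for the companion term. Collecting the four terms then yields exactly the asserted formula. I do not anticipate any genuine obstacle; the only place to be careful is tracking the overall sign through the contraction with $N_A$ and making sure the index-raising identification in the last step is applied consistently to each term.
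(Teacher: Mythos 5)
Your proposal is correct and is precisely the argument the paper intends: the proposition is stated as an immediate consequence of contracting the classical Codazzi--Mainardi equation with $N_A$ (using $\eta\paraa{N_A,\Pi(\Rb(X,Y)Z)}=-\Rb_{ijkl}X^iY^jZ^kN_A^l$) and substituting the two preceding propositions, with the final index-raising step justified exactly as you say. The signs and the collapse of the sums over $B$ all check out.
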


\subsection{Covariant derivatives and curvature}\label{sec:coderiv}

\noindent As we have seen, the operator $\nablah_X$ coincides with the
covariant derivative in the ambient space $M$, in the direction of a
vector $X\in\TSigma$. Thus, expressions of the type $g(\nabla_XY,Z)$,
where $X,Y,Z\in\TSigma$, can be computed as
$\eta(\nablah_XY,Z)$. In particular, one obtains the following formulas.

\begin{proposition}\label{prop:covderivFormulas}
  Let $\nabla$ denote the covariant derivative on $\Sigma$. For $u,v\in
  C^\infty(\Sigma)$ and $X\in\TSigma$ it holds that
  \begin{align}
    \nabla u &= \nh^i(u)\d_i\label{eq:cfGradient}\\
    \div(X) &= \nablah_iX^i\label{eq:cfDivergence}\\
    \Delta(u)&=\nh_i\nh^i(u)\label{eq:cfLaplacian}\\
    |\nabla^2u|^2&=\nh_i\nh^j(u)\nh_j\nh^i(u)\label{eq:nablaSquSq},
  \end{align}
  where $\div(X)$ is the divergence of $X$ on $\Sigma$, and $\Delta$
  is the Laplace operator on $\Sigma$.
\end{proposition}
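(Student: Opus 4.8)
The plan is to deduce all four identities from two facts already established: formula \eqref{eq:Diformula}, which records that $\nh^i(u)=\D^i(u)=g^{ap}\paraa{\d_ax^i}\paraa{\d_pu}$, and Proposition \ref{prop:Pproperties}, by which $\D$ is the orthogonal projection of $TM$ onto $\TSigma$. In particular $\D^{ij}$ is symmetric and idempotent, and since $\nablah^i=\D^{ik}\nablab_k$ this gives
\begin{align*}
  \D^i_k\nablah^k=\D^i_k\D^{kl}\nablab_l=\D^{il}\nablab_l=\nablah^i,
\end{align*}
equivalently $\D_{ij}\nablah^i=\nablah_j$; this ``absorption'' identity is what I will use repeatedly. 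I will also use, from \eqref{eq:nablaXYProjection}, that the Levi-Civita connection on $\Sigma$ is the tangential part of $\nablab$, i.e.\ $\nabla_XY=\D\paraa{\nablah_XY}$ for $X,Y\in\TSigma$, and that $\D$ is self-adjoint, so $\eta\paraa{\D(Z),e_b}=\eta(Z,e_b)$ for every $Z\in TM$ and $e_b\in\TSigma$.

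Identity \eqref{eq:cfGradient} is then immediate from \eqref{eq:Diformula}: $\nh^i(u)\d_i=g^{ap}\paraa{\d_pu}\paraa{\d_ax^i}\d_i=g^{ap}\paraa{\d_pu}e_a$, which is $\grad u$ in the basis $\{e_a\}$ of $\TSigma$. For \eqref{eq:cfDivergence} I would start from $\div(X)=g^{ab}\eta\paraa{\nabla_{e_a}X,e_b}$, replace $\nabla_{e_a}X$ by $\D\paraa{\nablah_{e_a}X}$ and discard the $\D$ against $e_b\in\TSigma$, obtaining $\div(X)=g^{ab}\eta\paraa{\nablah_{e_a}X,e_b}$; writing $e_a=\paraa{\d_ax^i}\d_i$ and contracting, the factor $g^{ab}\paraa{\d_ax^i}\paraa{\d_bx^j}$ collapses to $\D^{ij}$, leaving $\div(X)=\D_{ij}\nablah^iX^j=\nablah_iX^i$ by the absorption identity. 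Then \eqref{eq:cfLaplacian} is immediate, since $\Delta u=\div(\grad u)$ and $(\grad u)^i=\nh^i(u)$.

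For \eqref{eq:nablaSquSq} put $W=\grad u$, so $W^i=\nh^i(u)$ and $W\in\TSigma$ by \eqref{eq:cfGradient}. I would view the Hessian as the self-adjoint $(1,1)$-tensor $X\mapsto\nabla_XW$ on $\TSigma$, for which $|\nabla^2u|^2=\nabla_bW^a\,\nabla_aW^b$ in any coordinates on $\Sigma$; this is the only place the symmetry of $\nabla^2u$ is used. Computing $\nabla_bW^a=g^{ad}\eta\paraa{\nablah_{e_b}W,e_d}$ exactly as in the divergence case, but now keeping the free index and raising it with $g^{ab}$, a short calculation gives
\begin{align*}
  |\nabla^2u|^2=\D_{ij'}\D_{i'j}\paraa{\nablah^iW^j}\paraa{\nablah^{i'}W^{j'}}.
\end{align*}
Applying the absorption identity first as $\D_{ij'}\nablah^i=\nablah_{j'}$ and then as $\D_{i'j}\nablah^{i'}=\nablah_j$ removes both projections and yields $|\nabla^2u|^2=\paraa{\nablah_{j'}W^j}\paraa{\nablah_jW^{j'}}=\nh_i\nh^j(u)\,\nh_j\nh^i(u)$. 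The step that genuinely requires care is \eqref{eq:nablaSquSq}: $\nablah^iW^j$ is \emph{not} symmetric in $i,j$ — it carries a normal contribution in the index $j$ coming from $\nablab_kW^j$ — so it cannot be contracted naively, and it is precisely the passage to the $(1,1)$-form of the Hessian (legitimate by symmetry of $\nabla^2u$) that places each projection $\D$ next to one of the two differentiation indices, where the absorption identity can act. Everything else is a direct consequence of \eqref{eq:Diformula}, Proposition \ref{prop:Pproperties}, and that single identity.
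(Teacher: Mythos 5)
Your argument is correct, and it proves all four identities explicitly (the paper only writes out \eqref{eq:cfDivergence} and \eqref{eq:cfLaplacian} and declares the rest ``analogous''). The underlying mechanism is the same in both treatments --- namely that $\D^{ik}=g^{ab}(\d_ax^i)(\d_bx^k)$ is the orthogonal projection onto $\TSigma$, so that contractions of $\nablah$ with tangential data reduce to intrinsic covariant derivatives on $\Sigma$ --- but the organization differs in two useful ways. For \eqref{eq:cfDivergence} the paper invokes Gauss' formula to split $\nablab_{e_b}X$ into $\nabla_bX^c e_c+\alpha(e_b,X)$ and observes that the second fundamental form dies against $e_a$; your use of $\nabla_XY=\D(\nablah_XY)$ together with self-adjointness of $\D$ is the same cancellation packaged as the absorption identity $\D_{ij}\nablah^i=\nablah_j$, which is a cleaner bookkeeping device. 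For \eqref{eq:cfLaplacian} the paper redoes the computation in normal coordinates, whereas you get it for free from $\Delta=\div\circ\grad$ once \eqref{eq:cfGradient} and \eqref{eq:cfDivergence} are in hand --- strictly less work and no coordinate choice. Your treatment of \eqref{eq:nablaSquSq} is also the right one, and you correctly isolate the one genuinely delicate point: $\nablah^iW^j$ is not symmetric in $i,j$ (the component index carries the normal part $\alpha(e_b,W)$ of $\nablab W$), so the identity only holds because the symmetry of the intrinsic Hessian lets you place each projection $\D$ against a differentiation index before absorbing it. No gaps.
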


\begin{proof}
  Let us prove equation (\ref{eq:cfDivergence}) and equation
  (\ref{eq:cfLaplacian}). The remaining formulas can be proven in an
  analogous way. Since $\D^{ik}=g^{ab}(\d_ax^i)(\d_bx^k)$, Gauss'
  formula gives
  \begin{align*}
    \nablah_iX^i &= \D^{ik}\nablab_iX_k = 
    g^{ab}\paraa{\d_ax^i}\nablab_{e_b}X_k
    =g^{ab}\paraa{\d_ax^i}\parab{(\nabla_bX^c)(e_c)_i+\alpha(e_b,X)_i}\\
    &= g^{ab}g_{ac}\paraa{\nabla_bX^c} = \nabla_bX^b=\div(X).
  \end{align*}
  We prove equation (\ref{eq:cfLaplacian}) by making use of normal
  coordinates on $\Sigma$. Thus, we assume that $u^1,\ldots,u^n$ is a
  set of normal coordinates on $\Sigma$. In particular, this implies
  that $\nabla_a=\d_a$ and $\d_ag_{bc}=0$. With the help of Gauss
  formula, one computes
  \begin{align*}
    \nablah_i\nablah^i(u) &= 
    \eta_{ij}g^{ab}\paraa{\d_ax^i}\nablab_{e_b}
    \parab{g^{pq}\paraa{\d_px^j}\paraa{\d_q u}}\\
    &=\eta_{ij}g^{ab}\paraa{\d_ax^i}
    \nabla_b\parab{g^{pq}\paraa{\d_q u}}\paraa{\d_px^j}\\
    &= g^{ab}g_{ap}g^{pq}\paraa{\d^2_{bq} u} = g^{bq}\paraa{\d^2_{bq} u},
  \end{align*}
  which is equal to $\Delta(u)$ in normal coordinates.
\end{proof}

\noindent Let us investigate how the operator $\nh$ is related to
curvature.  By the very definition of curvature, it arises as the
commutator of two covariant derivatives; is there a similar relation
for $\nh^i$? In fact, when contracted with vectors in $\TSigma$,
$\nablah$ fulfills a curvature equation analogous to the one of
$\nablab$.

\begin{proposition}\label{prop:effectivelySymmetric}
  For any $u\in C^\infty(\Sigma)$, $Z\in TM$ and $X,Y\in\TSigma$
  it holds that
  \begin{align}
    &X^iY^j[\nh_i,\nh_j]Z^k = \Rb^k_{lij}X^iY^jZ^l\label{eq:comNablahZ}\\
    &X^iY^j[\nh_i,\nh_j](u) = 0.\label{eq:comNablahu}
  \end{align}
\end{proposition}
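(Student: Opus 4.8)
The plan is to compute the commutator $[\nh_i,\nh_j]$ acting on a tensor by unwinding the definition $\nh^i = \D^{ik}\nablab_k$ and using that $\D^{ik} = g^{ab}(\d_ax^i)(\d_bx^k)$ is (up to pulling back indices by the embedding differential) the orthogonal projection onto $\TSigma$. The key observation is that when one contracts the outer free indices $i,j$ with $X,Y\in\TSigma$, the expression $X_i Y_j \D^{ik}\D^{jl}$ reduces to $X^a Y^b (\d_ax^k)(\d_bx^l)$, so that $X^iY^j\nh_i\nh_j(\,\cdot\,)$ is just the iterated ambient covariant derivative $\nablab_X\nablab_Y$ applied after the intermediate projection $\D$. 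Concretely, for a function $u$ one has $\nh^j(u) = \D^{jl}(\d_l u)$, which on $\Sigma$ equals $(\d_a u)g^{ab}(\d_bx^j) = (\grad u)^j$, the ambient vector field obtained by viewing the intrinsic gradient as a vector in $TM$; and then $X^iY^j\nh_i\nh_j(u) = X^iY^j\D^{ik}\nablab_k\big((\grad u)_j\big)$, where the $\D^{ik}$ only serves to restrict the direction of differentiation to $\TSigma$, so this equals $\eta\big(\nablab_X(\grad u),Y\big) = (\nabla^2 u)(X,Y)$, the intrinsic Hessian, which is symmetric in $X,Y$. That gives \eqref{eq:comNablahu} immediately.

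For \eqref{eq:comNablahZ} the plan is analogous but one must track the extra index $k$. Writing $\nh_i\nh_j Z^k = \D_i^{\ p}\nablab_p\big(\D_j^{\ q}\nablab_q Z^k\big)$ and contracting with $X^iY^j$, the two terms where a derivative lands on one of the two $\D$-factors must be controlled. When $\nablab_p$ hits the inner $\D_j^{\ q}$, the resulting term is (a multiple of) the second fundamental form, since $\nablab$ of the projection operator is governed by $\nh\Pi$ via the identities already established (compare \eqref{eq:nablaXYProjection}, \eqref{eq:SFFPb}); antisymmetrizing in $i\leftrightarrow j$ this contribution is symmetric and hence drops out — this is exactly the Gauss-equation mechanism, so one expects the leftover to be $X^iY^j[\nablab_i,\nablab_j]Z^k$ restricted appropriately, which is $\Rb^k_{\ lij}X^iY^jZ^l$ by definition of the ambient curvature. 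Alternatively, and perhaps more cleanly, one can split $Z^k = \D^k_{\ l}Z^l + \Pi^k_{\ l}Z^l$ for $Z\in TM$ and handle the tangential and normal parts using the Gauss and Weingarten formulas respectively; since $X^iY^j[\nablab_i,\nablab_j]Z^k = \Rb^k_{\ lij}X^iY^jZ^l$ holds on $M$ for the honest covariant derivative, the content of the proposition is that replacing $\nablab$ by $\nh$ (i.e. inserting $\D$) does not change the antisymmetrized result once the outer indices are tangential.

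The main obstacle will be bookkeeping the terms in which a covariant derivative falls on a $\D$-factor rather than on $Z$ (or on $u$): one must verify that the ``correction'' terms either vanish by the antisymmetrization $i\leftrightarrow j$, or cancel in pairs. For the function case this is where one uses that the Hessian is symmetric (equivalently, that $\nabla$ is torsion-free on $\Sigma$), and in coordinates it amounts to the symmetry $\d_a\d_b u = \d_b\d_a u$ together with the symmetry of $\Gamma^c_{ab}$ — so, as in the proof of \eqref{eq:cfLaplacian}, working in normal coordinates on $\Sigma$ will make the computation transparent. I would carry out the argument in this order: (1) reduce $X^iY^j\nh_i\nh_j$ to an expression involving only $\d_ax^i$, $g^{ab}$, and ambient covariant derivatives along $\TSigma$; (2) prove \eqref{eq:comNablahu} by recognizing the result as the intrinsic Hessian and invoking its symmetry; (3) prove \eqref{eq:comNablahZ} by the Gauss/Weingarten splitting of $Z$, checking that the projection insertions contribute only symmetric terms, leaving precisely the ambient curvature term.
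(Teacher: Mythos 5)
Your plan is correct and follows essentially the same route as the paper: expand $X_iY_j\nablah^i\nablah^jZ^k$, commute the two ambient derivatives to produce the $\Rb$-term, and observe that the leftover term in which $\nablab$ falls on the projector $\D$ is symmetric in $X$ and $Y$ because (by Gauss' formula) it reduces to the second fundamental form evaluated on $(X,Y)$. Your treatment of (\ref{eq:comNablahu}) via the symmetry of the intrinsic Hessian is exactly the ``analogous'' argument the paper alludes to.
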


\begin{proof}
  One computes
  \begin{align*}
    X_iY_j&\nablah^i\nablah^jZ^k=
    X_iY_j\D^{il}\nablab_l\parab{\D^{jm}\nablab_mZ^k}\\
    &=X_iY_j\D^{il}\D^{jm}\nablab_l\nablab_mZ^k
    +X_iY_j\D^{il}\paraa{\nablab_l\D^{jm}}\nablab_mZ^k\\
    &=X_iY_j\D^{il}\D^{jm}\nablab_m\nablab_lZ^k
    +X_iY_j\D^{il}\D^{jm}\Rb^k_{nlm}Z^n
    +X_iY_j\D^{il}\paraa{\nablab_l\D^{jm}}\nablab_mZ^k.
  \end{align*}
  Let us now prove that the last term is symmetric in $X$ and
  $Y$. From Gauss' formula it follows that, for $X,Y\in\TSigma$ and
  $V=U+N$, with $U\in\TSigma$ and $N\in\TSigma^\perp$, for any tensor
  of the form $\tilde{T}_{ik}=T^{ab}(e_a)_i(e_b)_k$
  \begin{align*}
    \paraa{\nablab_{X}\tilde{T}}(Y,V)
    = \paraa{\nabla_XT}(Y,U)-\tilde{T}(Y,W_N(X)).
  \end{align*}
  Applying this to the expression above, with $T^{ab}=g^{ab}$ and
  $U_m=\nabla_mZ^k$, gives 
  \begin{align*}
    X_iY_j\D^{il}\paraa{\nablab_l\D^{jm}}\nablab_mZ^k
    &= \paraa{\nablab_{X}\tilde{T}}\paraa{Y,(\nablab^mZ^k)\d_m}
    = -\tilde{T}\paraa{Y,W_N(X)}\\
    &=g^{ab}(e_a)_i(e_b)_kY^iW_N(X)^k
    =g(Y,W_N(X))\\
    &= h_N(X,Y),
  \end{align*}
  where the last equality is Weingarten's equation. Thus, the
  expression is symmetric since the second fundamental form is
  symmetric. Hence, one obtains
  \begin{align*}
    X_iY_j\nablah^i\nablah^jZ^k&=
    X_iY_j\nablah^j\nablah^iZ^k+X_iY_j\D^{il}\D^{jm}\Rb^k_{nlm}Z^n\\ 
    &\quad+X_iY_j\D^{il}\paraa{\nablab_l\D^{jm}}\nablab_mZ^k
    -X_iY_j\D^{jm}\paraa{\nablab_m\D^{il}}\nablab_lZ^k\\    
    &=X_iY_j\nablah^j\nablah^iZ^k+X^lY^m\Rb^k_{nlm}Z^n.
  \end{align*}
  Equation (\ref{eq:comNablahu}) is proven in an analogous way.
\end{proof}

\noindent Let us illustrate that the operator $\nablah$ is also related
to the curvature on $\Sigma$.  Namely, consider the following equation
\begin{equation}\label{eq:curvatureEq}
  (\nabla^a u)\nabla_a\nabla_b\nabla^b u=(\nabla^a u)\nabla_b
  \nabla_a\nabla^b u -\R(\nabla u,\nabla u),
\end{equation}
where $\R$ is the Ricci curvature of $\Sigma$, which is a particular
instance of the relation between curvature and covariant derivatives
on $\Sigma$. Let us rewrite this equation in terms of $\nh^i$. From
Proposition \ref{prop:covderivFormulas} it immediately follows that
\begin{align*}
  (\nabla^a u)\nabla_a\nabla_b\nabla^b u = 
  \nh^i(u)\nh_i\nh_j\nh^j(u),
\end{align*}
and from 
\begin{equation}
  \Delta\paraa{|\nabla u|^2} = 
  2\paraa{\nabla^a u}\nabla_b \nabla_a \nabla^b  u+2|\nabla^2u|^2,
\end{equation}
one obtains 
\begin{align*}
  (\nabla^a u)\nabla_b\nabla_a\nabla^b u&=
  \frac{1}{2}\nh_i\nh^i\paraa{\nh^j(u)\nh_j(u)}-\nh_i\nh^j(u)\nh_j\nh^i(u)\\
  &=\nh_i\nh^i\nh^j(u)\nh_j(u)+[\nh_i,\nh^j](u)\nh^i\nh_j(u),
\end{align*}
by again using Proposition \ref{prop:covderivFormulas}. Thus, we can
write eq. (\ref{eq:curvatureEq}) as
\begin{equation}\label{eq:graduRicci}
  \begin{split}
    \nh^i(u)\nh_i\nh_j\nh^j(u) &= \nh_i\nh^i\nh^j(u)\nh_j(u)+
    [\nh_i,\nh^j](u)\nh^i\nh_j(u)\\
    &\quad-\R(\nabla u,\nabla u).
  \end{split}
\end{equation}
Turning this equation around, it gives an expression for the Ricci
curvature evaluated at $\nabla u$. Can the same expression be directly
derived from equation (\ref{eq:RicciCurvature})? Let us now
prove that it is indeed possible. 

\begin{proposition}\label{prop:rewriteRicci}
  For all $u\in C^\infty(\Sigma)$ it holds that
  \begin{align*}
    \paraa{\nablah_k\Pi_{im}}\paraa{\nablah_l\Pi^{lm}}\nablah^i(u)\nablah^k(u)
    &=\nablah_l\parab{\nablah^k(u)\nablah^l\nablah_k(u)}-
    \nablah^i\parab{\nablah^k(u)\nablah_k\nablah_i(u)}\\
    -\paraa{\nablah_l\Pi_{im}}\paraa{\nablah_k\Pi^{lm}}\nablah^i(u)\nablah^k(u)
    &=-\nablah^k(u)\nablah_k\nablah^i\nablah_i(u)
    +\nablah^k(u)\nablah^i\nablah_k\nablah_i(u)\\
    &\qquad-\D^{jl}\Rb_{ijkl}\nablah^k(u)\nablah^i(u).
  \end{align*}
\end{proposition}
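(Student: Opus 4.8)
The plan is to verify both identities by direct computation, moving all the ``total derivative'' terms on the right-hand sides onto the $\nablah\Pi$-expressions on the left by repeated use of the Leibniz rule together with the following facts, all established in or right after the earlier propositions: the operator $\nablah_i=\D^j_i\nablab_j$ obeys the Leibniz rule and annihilates $\eta$; the gradient $\nablah^i(u)=\D^i(u)$ is tangent to $\Sigma$, so $\Pi_{im}\nablah^i(u)=0$ and $\D^i_m\nablah^m(u)=\nablah^i(u)$; $\mathds{1}=\D+\Pi$ with $\D,\Pi$ complementary orthogonal projections, whence $\D^i_m\nablah^m=\nablah^i$ and $\nablah^i\nablah_i(u)=\Delta u$; the divergence $\nablah_l\Pi^{lm}$ can be traded for $\nablah_lN_A^l$ by Proposition~\ref{prop:WeingartenConsequence}; and, finally, the effective-symmetry relations of Proposition~\ref{prop:effectivelySymmetric}, which govern the commutation of two $\nablah$'s and are the only source of the ambient curvature term. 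A first simplification I would record and use on both left-hand sides is $(\nablah_k\Pi_{im})\nablah^i(u)=-\Pi_{im}\,\nablah_k\nablah^i(u)$, an immediate consequence of the Leibniz rule and the tangency of $\nabla u$.

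For the first identity I would begin with the right-hand side. Applying the Leibniz rule splits each of the two terms into a piece in which the outer $\nablah$ hits the leading gradient $\nablah^k(u)$ and a piece in which it passes through to the Hessian block. The ``leading gradient'' pieces are bilinear in the $\nablah$-Hessian $\nablah\nablah(u)$ and, because both free derivative indices there are ultimately contracted against tangent data, Proposition~\ref{prop:effectivelySymmetric} allows one to transpose and cancel them against each other. The surviving pieces must then be identified with the left-hand side: I would use the displayed relation above to remove the explicit $\nablah\Pi$, insert $\mathds{1}=\D+\Pi$ repeatedly, discard every term in which a $\Pi$ meets $\nabla u$ (tangency), and invoke Proposition~\ref{prop:WeingartenConsequence} to dispose of $\nablah_l\Pi^{lm}$.

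For the second identity, the two explicitly written derivative terms combine to $\nablah^k(u)[\nablah^i,\nablah_k]\nablah_i(u)$; replacing $\nablah_i(u)=\eta_{il}\nablah^l(u)$ and using that $\nablah$ annihilates $\eta$ together with $\eta^{ij}\nablah_i(u)=\nablah^j(u)$ turns this into $\nablah^k(u)[\nablah_l,\nablah_k]\nablah^l(u)$, a commutator of two $\nablah$'s acting on the tangent vector $\nabla u$. The delicate point is that the summed index $l$ is glued to the derivative slot of the outer $\nablah_l$ rather than to a free tangent vector, so the curvature relation \eqref{eq:comNablahZ} cannot be quoted verbatim; I would first decompose the vector slot of $\nabla u$ inside the bracket via $\mathds{1}=\D+\Pi$, so that on the $\D$-part Proposition~\ref{prop:effectivelySymmetric} yields exactly $-\D^{jl}\Rb_{ijkl}\nablah^k(u)\nablah^i(u)$, while the $\Pi$-part reintroduces the second fundamental form and, combined with the left-hand side processed as in the first identity, closes the computation.

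The main obstacle, and the source of the length, is precisely that $\nablah$ is not a connection on $\Sigma$: every integration by parts reintroduces $\nablah\Pi$, i.e.\ the second fundamental form, so one must keep careful track of which derivative indices are effectively tangential in order to know when Propositions~\ref{prop:effectivelySymmetric} and \ref{prop:WeingartenConsequence} may legitimately be applied; the $\nablah$-Hessian $\nablah\nablah(u)$ is in particular not symmetric, and only becomes so after a tangential contraction.
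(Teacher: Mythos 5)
Your toolbox is the right one, and your treatment of the \emph{second} identity is essentially the paper's own computation read backwards: the paper (which only writes out the second formula) strips the $\nablah\Pi$'s off the left-hand side using the Leibniz rule and the tangency of $\nabla u$, replaces $\nablah_k\Pi^{il}=-\nablah_k\D^{il}$, peels off the total-derivative term, and applies Proposition~\ref{prop:effectivelySymmetric} once at the end. Your $\D+\Pi$ splitting of the traced index is a legitimate reorganization of the same steps: the $\Pi$-part of the trace kills the leading $\nablah_l$ (since $\Pi^l_m\nablah_l=0$) and, after one more Leibniz step, reproduces exactly the second-fundamental-form term on the left, while the $\D$-part yields the curvature term. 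That half of the proposal closes.

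The first identity is where there is a genuine gap. The two ``leading gradient'' pieces do not cancel: they combine to
\begin{align*}
\paraa{\nablah^m\nablah^k(u)}[\nablah_m,\nablah_k](u),
\end{align*}
and Proposition~\ref{prop:effectivelySymmetric} only annihilates the part in which \emph{both} derivative slots are contracted with tangential data. Here the index $m$ is tangential (it sits on the outer $\D^{ma}$), but the index $k$ of the Hessian is not: its normal component is $\Pi^k_j\nablah^m\nablah^j(u)=-\paraa{\nablah^m\Pi^k_j}\nablah^j(u)$, a second-fundamental-form term, and it pairs with the non-vanishing tangent--normal part of $[\nablah_m,\nablah_k](u)$, which by the same Leibniz/tangency argument is again $\nablah\Pi$ contracted with $\nabla u$. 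The product is therefore a quadratic second-fundamental-form expression of exactly the same type as the left-hand side, and discarding it leaves your ``surviving pieces'' short by precisely that amount; the identification with the left-hand side cannot then succeed. That this term is genuinely nonzero is visible in the paper itself: it is the $[\nh_i,\nh^j](u)\nh^i\nh_j(u)$ retained in equation~(\ref{eq:graduRicci}). You in fact record the correct caveat in your closing paragraph --- the $\nablah$-Hessian is symmetric only after a tangential contraction --- but fail to apply it at the one step where it bites. The repair is either to keep the normal-projected remainder and feed it into the matching with the left-hand side, or to run the computation in the paper's direction, starting from the left-hand side, where no such cancellation ever needs to be asserted.
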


\begin{proof}
  Let us prove the second formula. One computes
  \begin{align*}
    -&\paraa{\nablah_l\Pi_{im}}\paraa{\nablah_k\Pi^{lm}}\nablah^i(u)\nablah^k(u)
    =\Pi_{im}\nablah_l\nablah^i(u)\paraa{\nablah_k\Pi^{lm}}\nablah^k(u)\\
    &= \nablah_l\nablah_i(u)\nablah^k(u)\paraa{\nablah_k\Pi^{il}}
    =-\nablah_l\nablah_i(u)\nablah^k(u)\paraa{\nablah_k\D^{il}}\\
    &= -\nablah^k(u)\nablah_k\parab{\D^{il}\nablah_l\nablah_i(u)}
    +\nablah^k(u)\D^{il}\nablah_k\nablah_l\nablah_i(u),
  \end{align*}
  and by applying Proposition \ref{prop:effectivelySymmetric} to
  $[\nablah_k,\nablah_l]\nablah_i(u)$ a curvature term appears and one
  obtains the stated formula.
\end{proof}

\noindent Applying the above result to $\R(\nabla u,\nabla u)$, by
using formula (\ref{eq:RicciCurvature}) for the Ricci curvature, one
reproduces equation (\ref{eq:graduRicci}).

\subsection{Integrable structures and K\"ahler manifolds}\label{sec:integration}

\noindent Let us investigate the important case when $\Sigma$ is a
K\"ahler manifold with respect to $g$ and $\J=\gamma^{-1}\theta$.  In
particular, this implies that the complex structure, which is now
integrable, is parallel with respect to the Levi-Civita connection,
which allows for a simplification of several formulas in Section
\ref{sec:coderiv}. The reason for never having to consider the
derivative of the Poisson bivector so far, is that everything was
expressed in terms of $\D^i(u)$, which in local coordinates becomes
\begin{align*}
  \D^i(u) = g^{ab}(\d_ax^i)(\d_bu),
\end{align*}
due to the fact that $\theta$ is an almost \KP\ structure. Thus, there
are no explicit dependencies on $\theta$ left, and any derivative
acting on $\D^i(u)$ will only produce derivatives of the metric. 

For K\"ahler manifolds, one need not worry about derivatives of
$\gamma^{-1}\theta$, since the complex structure is covariantly
constant, and instead of $\nablah$, one may consider
$\nablat$, defined by
\begin{align*}
  \nablat^i = \Dt^{ik}\nablab_k\equiv \frac{1}{\gamma}\{x^i,x^k\}\nablab_k.
\end{align*}
Recall from Proposition \ref{prop:Pproperties} that $\Dt$ is in fact the
associated complex structure. Therefore, the equation $\nabla\J=0$
can be formulated as follows.

\begin{proposition}\label{prop:KPstructureConsequence}
  Assume that $(\Sigma,g,\J)$ is a K\"ahler manifold. For any
  $X,Y\in\TSigma$ it holds that
  \begin{align}
    X_jY_k\paraa{\nablat^i\Dt^{jk}} = 0.
  \end{align}
\end{proposition}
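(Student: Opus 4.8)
The plan is to unwind the definitions so that the claimed identity becomes a coordinate statement of the fact that $\J$ is parallel. Recall $\Dt^{ik}=\gamma^{-1}\{x^i,x^k\}=\gamma^{-1}\theta^{ab}(\d_ax^i)(\d_bx^k)=\J^a_b(\d_ax^i)g^{bc}(\d_cx^k)$, so that as a map $TM\to TM$ restricted to $\TSigma$, $\Dt$ agrees with the complex structure $\J$ extended trivially (this is exactly Proposition \ref{prop:Pproperties}, eq.~(\ref{eq:PYeqJ}) after dividing by $\gamma$). Likewise $\nablat^i=\Dt^{ik}\nablab_k$, and since $\Dt^{ik}X_k=\J(X)^i$ for $X\in\TSigma$, contracting $\nablat^i$ with a tangential covector gives $\nablab$ in the corresponding tangential direction, just as for $\nablah$. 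So the left side, contracted with $X,Y\in\TSigma$, should be the ambient covariant derivative in the direction $\J(X)$... wait: here the free index $i$ is \emph{not} yet contracted with a tangential vector, so I should be a little careful — but the statement quantifies only over $X,Y\in\TSigma$ appearing in the lower slots, leaving $i$ free.

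Concretely, I would compute, for $X,Y\in\TSigma$,
\begin{align*}
  X_jY_k\nablat^i\Dt^{jk}
  &= \Dt^{il}\nablab_l\paraa{\Dt^{jk}X_jY_k} - \Dt^{il}\Dt^{jk}\nablab_l\paraa{X_jY_k}.
\end{align*}
The natural route, though, is to recognize $\Dt^{jk}X_jY_k$ and its derivative in terms of $\J$ acting on vector fields. Extend $X,Y$ to tangential vector fields near the point; then $\Dt^{jk}X_jY_k$ (viewed appropriately) encodes $g(\J X,Y)=\Omega(X,Y)$-type data, or rather $\Dt^{jk}X_j\d_k=\gamma^{-1}\P(X)^*$-type. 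The cleanest statement: writing $T^{jk}=\Dt^{jk}=\J^a_b(\d_ax^j)g^{bc}(\d_cx^k)$, this is a tensor of the form $T^{jk}=S^{ab}(e_a)^j(e_b)^k$ with $S^{ab}=\J^a_cg^{cb}$, i.e.\ essentially the (inverse) K\"ahler form. Using Gauss' formula in the guise used in the proof of Proposition \ref{prop:effectivelySymmetric} — namely that for a tensor $\tilde T_{jk}=T^{ab}(e_a)_j(e_b)_k$ one has $(\nablab_X\tilde T)(Y,V)=(\nabla_X T)(Y,U)-\tilde T(Y,W_N(X))$ when $V=U+N$ — I would reduce $X_jY_k\nablat^i\Dt^{jk}$, after contracting the free index with $e_a$ (equivalently multiplying by $(\d_ax^i)$ and using that $\Dt^{il}(\d_ax^i)$ recovers a tangential vector), to the intrinsic expression $(\nabla_{\J W}S)(X,Y)$ for a suitable tangential $W$, plus second-fundamental-form terms that cancel by symmetry — and then invoke $\nabla \J=0$, i.e.\ $\nabla S=0$.

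Actually the slicker argument I would run: the free index $i$ can be handled by noting $\Dt^{il}=\gamma^{-1}\P^{il}$ maps into $\TSigma$ (its image lies in $\TSigma$ by Proposition \ref{prop:Pproperties}), so it suffices to show $Z_iX_jY_k(\nablat^i\Dt^{jk})=0$ for all $Z\in\TSigma$ as well, since the expression with $i$ free is automatically a tangential vector field in the $i$-slot. Then $Z_i\nablat^i=Z_i\Dt^{il}\nablab_l=(\J Z)^l\nablab_l=\nablab_{\J Z}$ (up to sign, using $\Dt^{il}Z_i=-\J(Z)^l$; I'll track the sign via (\ref{eq:PYeqJ})). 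So I must show $\nablab_{\J Z}\paraa{\Dt^{jk}}X_jY_k=0$, and since $\J Z\in\TSigma$, this is $(\nablab_{\J Z}\tilde S)(X,Y)$ where $\tilde S_{jk}=S^{ab}(e_a)_j(e_b)_k$ and $S^{ab}=\J^a_cg^{cb}$; by the Gauss-formula identity this equals $(\nabla_{\J Z}S)(X,Y)$ minus a term $\tilde S(X, W_?(\cdot))$ which vanishes because $X,Y,\J Z$ are all tangential (no normal component $N$). Finally $(\nabla_W S)(X,Y)=g((\nabla_W\J)X,Y)=0$ since $(\Sigma,g,\J)$ is K\"ahler.

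The main obstacle I anticipate is purely bookkeeping: getting the raised/lowered indices and the sign conventions consistent — in particular confirming $\Dt^{il}Z_i\d_l=-\J(Z)$ (from $\P(Z)=\gamma\J(Z)$ and $\Dt=\gamma^{-1}\P$, one should get $\Dt(Z)=\J(Z)$, so I must be careful whether $\Dt^{il}Z_i$ or $\Dt^{li}Z_i$ is meant, and whether the antisymmetry of $\theta$ flips a sign) — and making rigorous the reduction "$i$-slot is automatically tangential, hence it suffices to contract with $Z\in\TSigma$.'' The latter is justified because $\Dt^{ik}=\gamma^{-1}\P^{ik}$ and $\nablat^i=\Dt^{ik}\nablab_k$ carries an overall factor $\Dt^{il}$ whose image (over $TM$) is $\TSigma$, so the expression, as a function of its free index, is a section of $\TSigma$; an alternative, if that makes the referee uneasy, is just to carry the free index through the Gauss-formula computation directly as I sketched in the second paragraph. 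Either way, once the indices are pinned down the vanishing is immediate from $\nabla\J=0$.
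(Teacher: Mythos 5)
Your argument is correct and rests on the same two pillars as the paper's own proof: Gauss' formula shows that the ambient derivative of the tangential tensor $\Dt^{jk}$, taken in a tangential direction and contracted with tangential $X,Y$, reduces to the intrinsic covariant derivative of $S^{ab}=\J^a_cg^{cb}$, which vanishes because $\nabla\J=0$ and $\nabla g=0$ on a K\"ahler manifold. The only difference is packaging --- the paper executes this in normal coordinates, while you use the invariant tensor identity from the proof of Proposition \ref{prop:effectivelySymmetric} together with a valid reduction of the free index $i$ to tangential contractions $Z_i\nablat^i=-\nablab_{\J Z}$ --- and the sign you leave open in $Z_i\Dt^{il}=-\J(Z)^l$ is immaterial since the conclusion is that the expression vanishes.
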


\begin{proof}
  Let us assume that $u^a$ is a set of normal coordinates on
  $\Sigma$. One obtains
  \begin{align*}
    X_jY_k\paraa{\nablat^i\Dt^{jk}} &=
    \frac{1}{\gamma}X_jY_k\{x^i,x^l\}\nablab_l\frac{1}{\gamma}\{x^j,x^k\}\\
    &=\frac{1}{\gamma}X_jY_k\theta^{ab}(\d_ax^i)\nablab_{e_b}
    \frac{1}{\gamma}\theta^{pq}(\d_px^j)(\d_qx^k)\\
    &=\frac{1}{\gamma}X_jY_k\theta^{ab}(\d_ax^i)\nabla_{e_b}
    \frac{1}{\gamma}\theta^{pq}(\d_px^j)(\d_qx^k)
  \end{align*}
  since $\nablab_{e_b}$ coincides with $\nabla_{e_b}$ when contracted
  with vectors in $\TSigma$. Now, we use the fact that
  $\nabla\gamma^{-1}\theta=0$ and normal coordinates to obtain
  \begin{align*}
    X_jY_k\paraa{\nablat^i\Dt^{jk}}
    &=\frac{1}{\gamma^2}X_jY_k\theta^{ab}\theta^{pq}(\d_ax^i)\d_b
    \paraa{(\d_px^j)(\d_qx^k)} = 0,
  \end{align*}
  since $X_j\d^2_{ab}x^j = X^c(\d_cx^i)\eta_{ij}\d^2_{ab}x^j=0$ in
  normal coordinates.
\end{proof}

\noindent Since $\nablah^i = -\Dt^{ik}\nablat_k$, it is easy to see
why one is allowed to replace $\nablah$ by $\nablat$ in many of the
formulas in Section \ref{sec:coderiv}. For instance
\begin{align*}
  \nablah^i\nablah_i(u) = \Dt^{ik}\nablat_k\paraa{\Dt_{il}\nablat^l(u)}
  =\D^{kl}\nablat_k\nablat_l(u)+\Dt^{ik}\paraa{\nablat_k\Dt_{il}}\nablat^l(u)
  =\nablat^l\nablat_l(u).
\end{align*}

\noindent Let us end this section with a couple of words about
integration. Assume that $\Sigma$ is a closed manifold, in which case
Stoke's theorem tells us that
\begin{align}\label{eq:StokesFormula}
  \int_\Sigma\div(X) = 0
\end{align}
for all $X\in\TSigma$. If $\Sigma$ carries an almost \KP\ structure,
it follows from Proposition \ref{prop:covderivFormulas} that
\begin{align*}
  \int_\Sigma\nablah_iX^i = 0,
\end{align*}
which implies that the standard rule for partial integration also
holds for $\nablah$. In case $\Sigma$ is a K\"ahler manifold, a
similar formula holds for $\nablat$, since one can show that
$\nablat_iX^i=-\nablah^i\paraa{\Dt_{ik}X^k}$.

\section{\KP\ algebras}\label{sec:KPalgebras}

\noindent In this section we shall consider the algebraic version of
the previous results, and find an intrinsic definition of a
Poisson algebra that corresponds to a (complex) function algebra on a
submanifold of $\reals^m$. Thus, we consider Poisson algebras $\A$
(with a unit) over $\complex$ generated (as algebras) by $m$ elements
$x^1,\ldots,x^m$, for which we denote
\begin{align}
  \P^{ij} = \{x^i,x^j\}.
\end{align}
It is interesting to carry through the constructions that will follow
below, when $\A$ is an arbitrary algebra generated by $x^1,\ldots,x^m$;
however, as the goal of the current exposition is rather to explore
what kind of results that can be obtained in the algebraic setting, we
shall at this stage avoid unnecessary complications by considering
$\A$ to be the field of fractions of the polynomial ring
$\complex[x^1,\ldots,x^m]$. This is analogous to considering Poisson
brackets on $\reals^m$ that restrict to functions on a subspace (which
is identified with the submanifold $\Sigma$). For instance (see also
Section \ref{sec:examples}), for an arbitrary polynomial
$C(x^1,x^2,x^3)$ one may define a Poisson bracket of functions on
$\reals^3$ by setting $\{f,g\}=\eps^{ijk}(\d_if)(\d_jg)(\d_kC)$. Since
$\{f,C\}=0$ for all $f$, the Poisson bracket restricts to the quotient
algebra $\complex[x^1,x^2,x^3]/(C)$, which may be identified with the
polynomial functions on the level set
$\Sigma=\{(x^1,x^2,x^3):C(x^1,x^2,x^3)=0\}$.

We let $\A$ have the structure of a $\ast$-algebra by setting
$(x^i)^\ast = x^i$, and we assume that the Poisson structure is such
that $\{u,v\}^\ast = \{u^\ast,v^\ast\}$ for all $u,v\in\A$.  Although
the position of indices (upper or lower) will not matter in what
follows (as the ambient manifold is thought of as $\reals^m$), we
shall keep the notation from differential geometry and also assume
that all repeated indices are summed over from $1$ to $m$.

Let $\Der(\A)$ denote the module of derivations generated by
$\d_i\equiv\d_{x^i}$ for $i=1,\ldots,m$. We equip this module with a
bilinear form $(\cdot,\cdot)$ defined through
\begin{align}\label{eq:defScalarProd}
  (X,Y)\equiv(X^i\d_i,Y^j\d_j) = X^i Y_i,
\end{align}
and we extend the involution to $\Der(\A)$ by setting $X^\ast =
(X^i)^\ast\d_i$. Furthermore, we define 
$\P:\Der(\A)\to\Der(\A)$ through
\begin{align}
  \P(X) = {\P^i}_{\!j}X^j\d_i. 
\end{align}

\noindent In terms of the Poisson tensor $\P^{ij}$, the defining
relation for an almost \KP\ structure can be formulated as
$\P^3(X)=-\gamma^2\P(X)$. We shall take this as a definition for
almost \KP\ algebras.

\begin{definition}\label{def:almostKPalgebra}
  Let $\A$ be the field of fractions of $\complex[x^1,\ldots,x^m]$,
  and let $\{\cdot,\cdot\}$ be a Poisson structure on $\A$, for which
  we set $\P^{ij}=\{x^i,x^j\}$. The Poisson algebra
  $(\A,\{\cdot,\cdot\})$ is called \emph{an almost \KP\ algebra} if
  there exists\footnote{Let us leave the square of $\gamma^2$ in order
    to keep the analogy with differential geometry; however, we shall
    in general not assume that there exists a square root of
    $\gamma^2$.}  an invertible hermitian $\gamma^2\in\A$ such that
  \begin{align}\label{eq:defAlmostKP}
    {\P^i}_k{\P^k}_l{\P^l}_j = -\gamma^2{\P^i}_j.
  \end{align}
\end{definition}

\noindent We shall introduce the notation of Section \ref{sec:diffgeoKP} and
define for $u\in\A$ and $X\in\Der(\A)$
\begin{align*}
  &\D^i(u) = \frac{1}{\gamma^2}\{u,x^k\}{\P^{i}}_{\!k}
  \qquad \D^{ik}=\D^i(x^k)\\
  &\Pi^{ik} = \delta^{ik}-\D^{ik}\qquad\nablah^iX^k = \D^i(X^k),
\end{align*}
as well as $\D(X) = \D^i_kX^k\d_i$, $\D(X,Y)=(\D(X),Y)$ and
$\Pi(X)=\Pi^{i}_kX^k\d_i$. Note that the defining relation
(\ref{eq:defAlmostKP}) implies that $\D$ is a projector on $\Der(\A)$,
i.e. $\D^2(X) = \D(X)$ for all $X\in\Der(\A)$. Therefore, the concepts
of tangent space and normal space arise naturally as projections of
$\Der(\A)$.
\begin{definition}
  Let $\A$ be an almost \KP\ algebra. The \emph{tangent
    space}\footnote{Strictly speaking, $\X(\A)$ should rather be
    called the tangent bundle, since we are always considering
    analogues of global objects on a manifold.}  $\X(\A)$ is defined
  as
  \begin{align}
    \X(\A) = \{\D(X):X\in\Der(\A)\},
  \end{align}
  and the \emph{normal space} $\N(\A)$ is defined as
  \begin{align}
    \N(\A) = \{\Pi(X):X\in\Der(\A)\}.
  \end{align}
\end{definition}

\noindent Clearly, it holds that $\Der(\A)=\X(\A)\oplus\N(\A)$,
$\D(X)=X$ for all $X\in\X(\A)$, and $(X,N)=0$ for all $X\in\X(\A)$ and
$N\in\N(\A)$. In the current situation, all modules are vector
spaces, since $\A$ is a field. Note however that the above setup
allows for an extension to more general rings.

It follows from (\ref{eq:defAlmostKP}) that
\begin{align}
  \gamma^2 = -(\P^4)^i_i/(\P^2)^k_k,
\end{align}
and in case there exists a basis $X_1,\ldots,X_n,Y_1,\ldots,Y_p$ of
$\Der(\A)$ such that $X_1,\ldots,X_n$ is a basis for $\X(\A)$ and
$Y_1,\ldots,Y_p$ is a basis of $\N(\A)$, it is easy to see that
\begin{align}
  \gamma^2 = -\frac{1}{n}\Tr\P^2
\end{align}
where $\Tr\P^2=(\P^2)^i_i$. The dimension of $\X(\A)$, i.e. $n$, is
called the \emph{geometric dimension of $\A$}.

In the differential geometric setting, the special case of a K\"ahler
manifold was studied. How may one proceed to define a \KP\ algebra?
Let us start from Proposition \ref{prop:KPstructureConsequence}, which
states that the following relation holds
\begin{align*}
  X_jY_k\paraa{\nablat^i\Dt^{jk}} = 0,
\end{align*}
where $\Dt^{jk}=\frac{1}{\gamma}\{x^j,x^k\}$ and
$\nablat^i=\Dt^{ik}\nablab_k$. In the case of $\reals^m$, this
statement can be written as
\begin{align}\label{eq:algebraCSparallell}
  \frac{1}{\gamma}\{x^i,\frac{1}{\gamma}\{x^j,x^k\}\}\P_{jl}\P_{km}=0.
\end{align}
However, this expression depends on the existence of a square root of
$\gamma^2$, which in general does not exist. Let us expand the above
expression as
\begin{align*}
  \frac{1}{\gamma}\{x^i,\frac{1}{\gamma}\{x^j,x^k\}\}\P_{jl}\P_{km}
  &=\frac{1}{\gamma^2}\{x^i,\{x^j,x^k\}\}\P_{jl}\P_{km}
  -\frac{1}{2\gamma^4}\{x^i,\gamma^2\}\{x^j,x^k\}\P_{jl}\P_{km}\\
  &=\frac{1}{\gamma^2}\{x^i,\{x^j,x^k\}\}\P_{jl}\P_{km}
  -\frac{1}{2\gamma^2}\{x^i,\gamma^2\}\P_{lm}.
\end{align*}
Thus, one is lead to the following additional requirement on an almost
\KP\ algebra
\begin{align*}
  \{x^i,\{x^j,x^k\}\}\P_{jl}\P_{km} = \frac{1}{2}\{x^i,\gamma^2\}\P_{lm}.
\end{align*}
However, in the following we shall focus on the general case of almost
\KP\ algebras.

\subsection{Curvature}

\noindent Since the formulas for curvature in Section
\ref{sec:diffgeoKP} are expressed in terms of Poisson brackets, it is
natural to introduce curvature in almost \KP\ algebras.  It was shown (see
eq. (\ref{eq:nablaXYProjection})) that the covariant derivative on
$\TSigma$ can be computed as

\begin{align*}
  \nabla_XY = \D\paraa{\nablah_XY}
\end{align*}
for all $X,Y\in\TSigma$. Let us make this into a definition in the
algebraic setting.
\begin{definition}
  Let $\A$ be an almost \KP\ algebra. For any $X\in\X(\A)$, the
  \emph{covariant derivative $\nabla_X:\X(\A)\to\X(\A)$} is
  defined as
  \begin{align}\label{eq:nablaDefinition}
    \nabla_XY = \D^{ik}X^l\D_l(Y_k)\d_i,
  \end{align}
  and in components we shall also write $\nabla_kY^i =
  \D^{il}\D_k(Y_l)$.  Furthermore, for $u\in\A$ we set
  $\nabla_X(u) = X^i\D_i(u)$.
\end{definition}

\begin{proposition}\label{prop:nablaProperties}
  Let $\A$ be an almost \KP\ algebra. For all $X,Y,Z\in\X(\A)$ and  
  $u\in\A$, the covariant derivative has the following
  properties
  \begin{enumerate}
  \item $\nabla_X(Y+Z)=\nabla_XY+\nabla_XZ$,
  \item $\nabla_{(X+Y)}Z=\nabla_XZ+\nabla_YZ$,
  \item $\nabla_{(uX)}Y=u\nabla_XY$,
  \item $\nabla_X(uY)=\nabla_X(u)Y+u\nabla_XY$.
  \end{enumerate}
\end{proposition}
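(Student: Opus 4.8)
The plan is to verify the four properties directly from the definition
$\nabla_XY=\D^{ik}X^l\D_l(Y_k)\d_i$ and $\nabla_X(u)=X^i\D_i(u)$, using only
the facts that $\D_l$ is a derivation of $\A$ (being the coordinate vector
field $\d_{x^l}$ composed with multiplication by algebra elements), that
$\D^{ik}\in\A$, and that $\X(\A)$ is stable under the projector $\D$. The
first two properties are completely immediate from the bilinearity of the
map $(X,Y)\mapsto \D^{ik}X^l\D_l(Y_k)\d_i$ in each slot: for (1) one just
uses $\D_l(Y_k+Z_k)=\D_l(Y_k)+\D_l(Z_k)$, and for (2) one uses that the
expression is already manifestly linear in the components $X^l$. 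One small
point worth a sentence: one must check that $\nabla_XY$ actually lands in
$\X(\A)$, which is exactly the reason the factor $\D^{ik}$ (rather than
$\delta^{ik}$) appears — applying $\D$ to any element of $\Der(\A)$ gives an
element of $\X(\A)$ by definition.

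For property (3), I would compute
$\nabla_{(uX)}Y=\D^{ik}(uX)^l\D_l(Y_k)\d_i=\D^{ik}uX^l\D_l(Y_k)\d_i
=u\,\D^{ik}X^l\D_l(Y_k)\d_i=u\nabla_XY$,
the only input being that scalar multiplication by $u\in\A$ commutes past
the $\A$-valued coefficients $\D^{ik}$ and $\D_l(Y_k)$, which holds since
$\A$ is commutative. Property (4) is the one genuine computation: here I
would apply the Leibniz rule for the derivation $\D_l$,
namely $\D_l(uY_k)=\D_l(u)Y_k+u\D_l(Y_k)$, to get
\begin{align*}
  \nabla_X(uY)&=\D^{ik}X^l\D_l(uY_k)\d_i
  =\D^{ik}X^l\paraa{\D_l(u)Y_k+u\D_l(Y_k)}\d_i\\
  &=\D^{ik}X^l\D_l(u)Y_k\d_i+u\,\D^{ik}X^l\D_l(Y_k)\d_i.
\end{align*}
The second summand is visibly $u\nabla_XY$. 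For the first summand I want it
to equal $\nabla_X(u)\,Y=\paraa{X^l\D_l(u)}Y$, i.e.\ I need
$\D^{ik}Y_k\d_i=Y$; but this is precisely the statement that $\D(Y)=Y$ for
$Y\in\X(\A)$, which was noted right after the definition of the tangent
space. So the first summand is $\nabla_X(u)\D(Y)=\nabla_X(u)Y$, completing
(4).

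The main (and only mildly subtle) obstacle is bookkeeping about where
objects live: one must use the hypothesis $Y,Z\in\X(\A)$ in exactly two
places — to invoke $\D(Y)=Y$ in the proof of (4), and implicitly to know
that the output of $\nabla$ is again in $\X(\A)$ so that the operator is
well defined as a map $\X(\A)\to\X(\A)$. Everything else is the derivation
property of $\D_l$ and the commutativity of $\A$; no use of the defining
relation \eqref{eq:defAlmostKP} beyond its consequence that $\D$ is a
projector is needed. I would write the proof as four short displays, one per
item, with a single introductory remark recording that each $\D_l$ is a
derivation and that $\D(Y)=Y$ on $\X(\A)$, so that the Leibniz step in (4)
and the projector step in (4) are both justified by already-stated facts.
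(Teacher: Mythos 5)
Your proof is correct and follows essentially the same route as the paper: items (1)--(3) by linearity/commutativity, and item (4) by the Leibniz rule for the derivation $\D_l$ together with the identity $\D^{ik}Y_k=Y^i$ for $Y\in\X(\A)$, which is exactly the step the paper flags with ``since $Y\in\X(\A)$''. No issues.
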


\begin{proof}
  The first three properties are immediate from the definition. Let us
  prove the last one. One computes
  \begin{align*}
    \nabla_X(uY^i) &= \D^{ik}X^l\D_l(uY_k) = 
    u\D^{ik}X^l\D_l(Y_k)+\D^{ik}Y_kX^l\D_l(u)\\
    &=u\nabla_XY^i+Y^i\nabla_X(u),
  \end{align*}
  since $Y\in\X(\A)$.
\end{proof}

\noindent The above result shows that $\nabla$ has all the properties
one expects from an affine connection.  We shall also extend the
action of the covariant derivative to tensors in a standard manner;
for instance
\begin{align*}
  \paraa{\nabla_XT}(Y,Z) = \nabla_XT(Y,Z)-T\paraa{\nabla_XY,Z}
  -T\paraa{Y,\nabla_XZ}.
\end{align*}
The following lemma is important proving further properties of the
covariant derivative and its associated curvature. It is an algebraic
analogue of the fact that in a Riemannian manifold $M$, it holds that
$\nabla_i\nabla_j(u)=\nabla_j\nabla_i(u)$ for $u\in C^\infty(M)$.

\begin{lemma}\label{lemma:DDSymmetric}
  Let $\A$ be an almost \KP\ algebra. For all $u\in\A$ it holds that
  \begin{align*}
    [\D^i,\D^j](u)\P_{ik}\P_{jl}=0.
  \end{align*}
\end{lemma}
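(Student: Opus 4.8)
The plan is to collapse the twice–iterated $\D$ into nested Poisson brackets and then kill everything with the Jacobi identity. The whole argument rests on one identity, valid for every $u\in\A$:
\begin{align*}
  \D^i(u)\P_{ik}=\{u,x^k\}.
\end{align*}
To see this, note first that on the function field $\A$ the bracket is the biderivation $\{f,g\}=\P^{ij}(\d_if)(\d_jg)$, so $\D^i(u)=\gamma^{-2}\{u,x^a\}\P^{ia}=\D^{ij}\d_ju$; then the defining relation (\ref{eq:defAlmostKP}), which says precisely that $\D=-\gamma^{-2}\P^2$ is a projector fixing the range of $\P$ (so $\D\P=\P$), gives $\D^i(u)\P_{ik}=(\D^{ij}\P_{ik})\d_ju=\P^{jk}\d_ju=\{u,x^k\}$. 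Intuitively this just says $\D$ fixes the Hamiltonian vector field $\{u,x^a\}\d_a=-\P(du)$, which lies in the image of $\P$. Antisymmetry of $\P$ turns this into the companion identity $\D^j(u)\{g,x^j\}=-\{u,g\}$ for all $u,g\in\A$.

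With these at hand I would argue as follows. Applying $\D^i(v)\P_{ik}=\{v,x^k\}$ with $v=\D^j(u)$ to each term of the commutator gives
\begin{align*}
  [\D^i,\D^j](u)\P_{ik}\P_{jl}=\{\D^j(u),x^k\}\P_{jl}-\{\D^j(u),x^l\}\P_{jk}.
\end{align*}
In each term the Leibniz rule moves the outer bracket off $\P$, e.g. $\{\D^j(u),x^k\}\P_{jl}=\{\D^j(u)\P_{jl},x^k\}-\D^j(u)\{\P_{jl},x^k\}$, and applying $\D^j(u)\P_{jl}=\{u,x^l\}$ once more reduces the expression to
\begin{align*}
  \paraa{\{\{u,x^l\},x^k\}-\{\{u,x^k\},x^l\}}-\D^j(u)\paraa{\{\P^{jl},x^k\}-\{\P^{jk},x^l\}}.
\end{align*}
Now the Jacobi identity applied to the first group yields $-\{u,\P^{kl}\}$, and applied to $\{\P^{jl},x^k\}-\{\P^{jk},x^l\}=\{\{x^j,x^l\},x^k\}-\{\{x^j,x^k\},x^l\}$ yields $\{\P^{kl},x^j\}$; finally the companion identity gives $\D^j(u)\{\P^{kl},x^j\}=-\{u,\P^{kl}\}$, so the two contributions cancel and the whole expression is $0$.

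The main obstacle is recognising and establishing the identity $\D^i(u)\P_{ik}=\{u,x^k\}$ at the outset; after that every step is Leibniz, Jacobi on the generators $x^i$, or relabelling, and — crucially — no derivative of $\gamma^2$ or of $\P^{ij}$ ever survives in isolation, all such terms being absorbed into this identity and into $\{u,\P^{kl}\}$ before they cancel. A brute-force expansion of $[\D^i,\D^j](u)\P_{ik}\P_{jl}$ into partial derivatives of $u$, $\gamma^2$ and $\P$ is possible but considerably messier and obscures this cancellation.
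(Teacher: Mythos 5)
Your proof is correct and follows essentially the same route as the paper's: both arguments hinge on collapsing $\D^i(v)\P_{ik}$ to $\{v,x^k\}$ via the projector relation $\P^3=-\gamma^2\P$ (in the paper this appears as the chain $\gamma^{-2}\{\D^j(u),x_m\}\P^{im}\P_{ik}=\{\D^j(u),x^m\}\D_{mk}=\{\D^j(u),x_k\}$), then applying Leibniz to peel the remaining $\P$ off the outer bracket and the Jacobi identity twice. The only difference is organizational — you exhibit the cancellation of the two $\{u,\P^{kl}\}$ terms explicitly, whereas the paper shows the two orderings of $\D^i\D^j(u)\P_{ik}\P_{jl}$ reduce to the same expression.
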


\begin{proof}
  One computes
  \begin{align*}
    \D^i\D^j(u)\P_{ik}\P_{jl} &=
    \frac{1}{\gamma^2}\{\D^j(u),x_m\}\P^{im}\P_{ik}\P_{jl}
    =\{\D^j(u),x^m\}\D_{mk}\P_{jl}\\
    &=\{\D^j(u),x_k\}\P_{jl}
    =\{\D^j(u)\P_{jl},x_k\}-\{\P_{jl},x_k\}\D^j(u)\\
    &=\{\{u,x_l\},x_k\}-\{\P_{jl},x_k\}\D^j(u).
  \end{align*}
  Using the Jacobi identity in both terms gives
  \begin{align*}
    \D^i\D^j(u)\P_{ik}\P_{jl} &=
    -\{\{x_l,x_k\},u\}-\{\{x_k,u\},x_l\}\\
    &\quad+\{\P_{lk},x_j\}\D^j(u)+\{\P_{kj},x_l\}\D^j(u)\\
    &=\{\{u,x_k\},x_l\}-\{\P_{jk},x_l\}\D^j(u)\\
    &\quad-\{\{x_l,x_k\},u\}+\{\P_{lk},u\}\\
    &=\{\{u,x_k\},x_l\}-\{\P_{jk},x_l\}\D^j(u)=\D^j\D^i(u)\P_{ik}\P_{jl},
  \end{align*}
  by comparing with the result of the previous computation.
\end{proof}

\noindent The action of on $\A$ of an element $X\in\X(\A)$, has
previously been defined as $X(u)=X^i\D_i(u)$. By setting
$[X,Y]^i=X(Y^i)-Y(X^i)$ one can show that $[X,Y]$ is again an element
of $\X(\A)$ and that the connection is torsion free with respect to
this commutator.

\begin{proposition}
  If $X,Y\in\X(\A)$ then it follows that $[X,Y]\in\X(\A)$. Moreover,
  the covariant derivative in an almost \KP\ algebra has no torsion,
  i.e.
  \begin{align*}
    \nabla_XY-\nabla_YX-[X,Y] = 0
  \end{align*}
  for all $X,Y\in\X(\A)$.
\end{proposition}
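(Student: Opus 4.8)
The plan is to prove the two claims separately, using Lemma \ref{lemma:DDSymmetric} as the main tool for the torsion-freeness, and a direct computation with the definitions for the closure of $\X(\A)$ under the commutator.

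First I would show that $[X,Y]\in\X(\A)$ for $X,Y\in\X(\A)$. Since $\D$ is a projector, it suffices to check that $\D\paraa{[X,Y]} = [X,Y]$, i.e. that $\D^i_k[X,Y]^k = [X,Y]^i$. Writing $[X,Y]^i = X^j\D_j(Y^i) - Y^j\D_j(X^i)$ and using $X,Y\in\X(\A)$ (so $X^i = \D^i_kX^k$ and $Y^i = \D^i_kY^k$), I would expand $\D^i_k[X,Y]^k$ and compare. The key observation is that $\D^i_k\D_j(Y^k)$ differs from $\D_j(\D^i_kY^k) = \D_j(Y^i)$ by a term involving $Y^k\D_j(\D^i_k)$, and symmetrically for $X$; the difference $[X,Y]^i - \D^i_k[X,Y]^k$ thus reduces to $X^jY^k\paraa{\D_j(\D^i_k) - \D_k(\D^i_j)}$ up to reindexing. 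This is where I expect to need Lemma \ref{lemma:DDSymmetric}: contracting the bracketed expression against $X^jY^k$, which lie in $\X(\A)$ and hence can be written with factors of $\D$ (or equivalently of $\P$, since $\D = -\gamma^{-2}\P^2$), produces exactly the contraction $[\D^j,\D^k](x^i)\P_{\bullet j}\P_{\bullet k}$-type term that vanishes by the lemma. Making the index bookkeeping line up — in particular converting ``contracted with a tangent vector'' into ``contracted with $\P_{ik}\P_{jl}$'' — is the main obstacle, and will use that $\D^i_k X^k = X^i$ for $X\in\X(\A)$ together with $\D = -\gamma^{-2}\P^2$.

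Next I would prove the torsion identity $\nabla_XY - \nabla_YX - [X,Y] = 0$. Using the definition $\nabla_XY = \D^{ik}X^l\D_l(Y_k)\d_i$ and the fact that $Y\in\X(\A)$, one has $\nabla_XY^i = \D^{il}X^j\D_j(Y_l)$. The derivation property $\D_j(\D^i_lY^l) = \D_j(\D^i_l)Y^l + \D^i_l\D_j(Y^l)$ together with $Y^i = \D^i_lY^l$ lets me rewrite $\D^{il}\D_j(Y_l)$ as $\D_j(Y^i) - Y^l\D_j(\D^i_l)$, and hence $\nabla_XY^i = X^j\D_j(Y^i) - X^jY^l\D_j(\D^i_l)$. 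Subtracting the same expression with $X$ and $Y$ swapped, the ``$X^j\D_j(Y^i) - Y^j\D_j(X^i)$'' part is precisely $[X,Y]^i$, so
\begin{align*}
  \nabla_XY^i - \nabla_YX^i - [X,Y]^i = -X^jY^l\D_j(\D^i_l) + Y^jX^l\D_j(\D^i_l) = X^lY^j\paraa{\D_j(\D^i_l) - \D_l(\D^i_j)},
\end{align*}
after relabelling in the second term. This is the same tangentially-contracted antisymmetric combination of $\D_\bullet(\D^i_\bullet)$ that appeared above, so it again vanishes by Lemma \ref{lemma:DDSymmetric} (after rewriting $\D$ in terms of $\P$ and using $X,Y\in\X(\A)$). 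I would therefore present the vanishing of $X^lY^j\paraa{\D_j\D^i_l - \D_l\D^i_j}$ for $X,Y\in\X(\A)$ as a small auxiliary claim, prove it once from Lemma \ref{lemma:DDSymmetric}, and then invoke it for both parts of the proposition. The only real work is the translation between the two ways of saying ``tangential contraction'' ($\D^\bullet_k\,\cdot\,$ versus $\P^{\bullet m}\P_{\bullet\bullet}$), which follows from $-\gamma^{-2}\P^2 = \D$ and invertibility of $\gamma^2$.
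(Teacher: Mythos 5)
Your proposal is correct and follows essentially the same route as the paper: expand $\D([X,Y])$, reduce the discrepancy with $[X,Y]$ to the antisymmetric combination $X^lY^j\paraa{\D_j(\D^i_l)-\D_l(\D^i_j)}$, and kill it with Lemma \ref{lemma:DDSymmetric} after using the symmetry of $\D^{ik}$ and $\D=-\gamma^{-2}\P^2$ to produce the required $\P\P$ contraction. The only cosmetic difference is in the torsion part, where the paper simply observes that $\nabla_XY-\nabla_YX=\D([X,Y])$ directly from the definition and reuses the first computation, whereas you re-expand to exhibit the same cancelling term.
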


\begin{proof}
  One computes
  \begin{align*}
    \D([X,Y])^i &= \D^{ik}[X,Y]_k =
    \D^{ik}X^l\D_l(Y_k)-\D^{ik}Y^l\D_l(X_k)\\
    &=X^l\D_l(Y^i)-Y_kX^l\D_l(\D^{ik})-Y^l\D_l(X_k)+X_kY^l\D_l(\D^{ik}) \\
    &=[X,Y]^i-Y_kX^l\D_l(\D^{ik})+X_kY^l\D_l(\D^{ik}).
  \end{align*}
  The last two terms cancel by Lemma \ref{lemma:DDSymmetric}, which
  shows that $\D([X,Y])=[X,Y]$. Let us now show that the connection is
  torsion free. Writing
  \begin{align*}
    \nabla_XY^i-\nabla_YX^i &= 
    \D^{ik}X^l\D_l(Y_k)-\D^{ik}Y^l\D_l(X_k)= \D([X,Y])^i,
  \end{align*}
  it follows from the previous calculation that this equals
  $[X,Y]^i$.
\end{proof}



\noindent Furthermore, one can show that the connection $\nabla$ is a
metric connection.

\begin{proposition}\label{prop:metricConnection}
  In an almost \KP\ algebra with covariant derivative $\nabla$ it
  holds that
  \begin{align*}
    \nabla_X(Y,Z)-(\nabla_XY,Z)-(Y,\nabla_XZ) = 0
  \end{align*}
  for all $X,Y,Z\in\X(\A)$.
\end{proposition}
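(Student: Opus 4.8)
The plan is to verify the Leibniz-type identity $\nabla_X(Y,Z) = (\nabla_XY,Z) + (Y,\nabla_XZ)$ by direct computation, reducing everything to the scalar Leibniz rule for the operators $\D_i$ together with Lemma~\ref{lemma:DDSymmetric}. First I would unwind the definitions: for $X,Y,Z\in\X(\A)$ we have $(Y,Z) = Y^iZ_i$, so $\nabla_X(Y,Z) = X^k\D_k(Y^iZ_i) = X^k\D_k(Y^i)Z_i + Y^iX^k\D_k(Z_i)$, since $\D_k$ is a derivation. On the other side, $(\nabla_XY,Z) = (\D^{ij}X^k\D_k(Y_j))Z_i = X^k\D_k(Y_j)\D^{ij}Z_i$, and because $Z\in\X(\A)$ we have $\D^{ij}Z_i = \D(Z)^j = Z^j$; hence $(\nabla_XY,Z) = X^k\D_k(Y_j)Z^j$. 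Similarly $(Y,\nabla_XZ) = Y^iX^k\D_k(Z_i)$, using $Y\in\X(\A)$. Comparing, the claimed identity reduces termwise to the trivial statement $X^k\D_k(Y^i)Z_i = X^k\D_k(Y_j)Z^j$ (just a relabeling of the dummy index, remembering that raising/lowering indices is trivial here since the ambient space is $\reals^m$).

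So in fact the proof is essentially immediate once one observes the two facts: $\D_k$ is a derivation of $\A$ (it is a Poisson-bracket expression, or one can note $\D_i(u) = \tfrac{1}{\gamma^2}\{u,x^k\}\P_{ik}$ which is a derivation in $u$ after checking it respects products — this follows from the Leibniz rule for $\{\cdot,x^k\}$ and the fact that the $\tfrac{1}{\gamma^2}\P_{ik}$ prefactor does not involve the differentiated argument), and $\D(Y) = Y$, $\D(Z) = Z$ for tangential $Y,Z$. I would write out the three-line chain of equalities, invoking ``$\D_k$ is a derivation'' for the product rule and ``$Y,Z\in\X(\A)$ so $\D(Y)=Y$, $\D(Z)=Z$'' to absorb the $\D^{ij}$ factors, and conclude.

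The only subtlety worth spelling out — and the one place I would be careful — is confirming that $\D_k(\cdot)$ genuinely acts as a derivation on $\A$ so that the product rule $\D_k(Y^iZ_i) = \D_k(Y^i)Z_i + Y^i\D_k(Z_i)$ is valid; this is where the ``main obstacle'' lies, though it is mild. One checks directly from $\D_i(u)=\frac{1}{\gamma^2}\{u,x^k\}\P_{ik}$ that $\D_i(uv) = \frac{1}{\gamma^2}\{uv,x^k\}\P_{ik} = \frac{1}{\gamma^2}(u\{v,x^k\}+v\{u,x^k\})\P_{ik} = u\D_i(v)+v\D_i(u)$, using bilinearity and the Leibniz property of the Poisson bracket in the first slot. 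With that in hand, the metric-compatibility identity is a two-line calculation and requires nothing beyond $\D(Y)=Y$, $\D(Z)=Z$. (Note that, unlike the torsion-freeness proof, Lemma~\ref{lemma:DDSymmetric} is not even needed here, since no second-order operators $\D^i\D^j$ appear.)

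\begin{proof}
  Since $Y,Z\in\X(\A)$, we have $\D(Y)=Y$ and $\D(Z)=Z$, i.e.\ $\D^{ij}Y_j = Y^i$ and $\D^{ij}Z_j = Z^i$. Recall also that $\D_k$ is a derivation of $\A$: from $\D_k(u)=\frac{1}{\gamma^2}\{u,x^m\}\P_{km}$ and the Leibniz rule for the Poisson bracket one gets $\D_k(uv) = \frac{1}{\gamma^2}\paraa{u\{v,x^m\}+v\{u,x^m\}}\P_{km} = u\D_k(v)+v\D_k(u)$ for all $u,v\in\A$.

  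Now compute, for $X,Y,Z\in\X(\A)$,
  \begin{align*}
    \nabla_X(Y,Z) &= X^k\D_k\paraa{Y^iZ_i}
    = X^k\D_k(Y^i)Z_i + Y^iX^k\D_k(Z_i),
  \end{align*}
  using that $\D_k$ is a derivation. On the other hand,
  \begin{align*}
    (\nabla_XY,Z) &= \paraa{\D^{ij}X^k\D_k(Y_j)}Z_i
    = X^k\D_k(Y_j)\paraa{\D^{ij}Z_i}
    = X^k\D_k(Y_j)Z^j,
  \end{align*}
  since $Z\in\X(\A)$, and similarly
  \begin{align*}
    (Y,\nabla_XZ) &= Y_i\paraa{\D^{ij}X^k\D_k(Z_j)}
    = X^k\D_k(Z_j)\paraa{\D^{ij}Y_i}
    = X^k\D_k(Z_j)Y^j,
  \end{align*}
  since $Y\in\X(\A)$. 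Adding the last two displays and relabeling the summation index gives exactly $X^k\D_k(Y^i)Z_i + Y^iX^k\D_k(Z_i) = \nabla_X(Y,Z)$, which proves the claim.
\end{proof}
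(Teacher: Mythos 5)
Your proof is correct and follows essentially the same route as the paper's: expand $\nabla_X(Y,Z)$ via the Leibniz rule for $\D_k$ and absorb the $\D^{ij}$ factors using $\D(Y)=Y$, $\D(Z)=Z$ for tangential vectors. Your explicit verification that $\D_k$ is a derivation, and your remark that Lemma~\ref{lemma:DDSymmetric} is not needed here, are both accurate additions but do not change the argument.
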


\begin{proof}
  One computes
  \begin{align*}
    \nabla_X(Y,Z)-&(\nabla_XY,Z)-(Y,\nabla_XZ)\\
    &=X^m\D_m(Y_iZ^i)-\D^{ik}X^m\D_m(Y_k)Z_i-Y_i\D^{ik}X^m\D_m(Z_k)\\
    &=X^m\D_m(Y_iZ^i)-X^m\D_m(Y_k)Z^k-Y^kX^m\D_m(Z_k) = 0,
  \end{align*}
  which shows that the connection is metric.
\end{proof}

\noindent Let us now proceed and define curvature in the usual
manner.

\begin{definition}
  Let $\A$ be an almost \KP\ algebra and let $\nabla$ be the covariant
  derivative of $\A$. For $X,Y,Z\in\X(\A)$ we define the
  \emph{curvature tensor $R$} via
  \begin{align}
    R(X,Y)Z = \nabla_X\nabla_YZ - \nabla_Y\nabla_X Z - \nabla_{[X,Y]}Z,
  \end{align}
  and we write $R(X,Y,Z,V)=\paraa{R(Z,V)Y,X}$ as well as
  $R(X,Y,Z)=R(X,Y)Z$ for $X,Y,Z,V\in\X(\A)$.
\end{definition}

\noindent We continue by proving the Bianchi identities.

\begin{proposition}\label{prop:bianchi}
  Let $\A$ be an almost \KP\ algebra and let $R$ be the curvature
  tensor of $\A$. For all $X,Y,Z,V\in\X(\A)$ it holds that  
  \begin{align}
    &R(X,Y,Z)+R(Z,X,Y)+R(Y,Z,X) = 0\label{eq:firstBianchi}\\
    &\paraa{\nabla_XR}(Y,Z,V)+\paraa{\nabla_YR}(Z,X,V)+
    \paraa{\nabla_ZR}(X,Y,V)=0\label{eq:secondBianchi}.
  \end{align}
\end{proposition}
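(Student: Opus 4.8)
The plan is to prove the two Bianchi identities by reducing them, as much as possible, to the corresponding identities for the ambient covariant derivative $\nablab$ on $\reals^m$ — which are trivial since $\nablab$ is flat — together with the algebraic properties of $\D$, $\nablah$, and $\nabla$ established in the preceding propositions and in Lemma \ref{lemma:DDSymmetric}. Throughout I would work in components, writing $\nabla_kZ^i=\D^{il}\D_k(Z_l)$ and $R^i_{jkl}$ for the components of $R$, and I would freely use that $\D$ is a projector, that the connection is torsion-free and metric, and that $X^iY^j[\nablah_i,\nablah_j](u)=0$ type statements hold (the algebraic analogue of Proposition \ref{prop:effectivelySymmetric}, which here is exactly Lemma \ref{lemma:DDSymmetric}).

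For the first Bianchi identity (\ref{eq:firstBianchi}), the key observation is torsion-freeness: since $\nabla_XY-\nabla_YX-[X,Y]=0$, the usual textbook manipulation goes through verbatim. Concretely, I would expand $R(X,Y)Z+R(Z,X)Y+R(Y,Z)X$ using the definition of $R$, substitute $\nabla_XY=\nabla_YX+[X,Y]$ wherever an ``inner'' covariant derivative of one argument by another appears, and collect terms; everything cancels against the $\nabla_{[X,Y]}$ terms plus the Jacobi identity for the commutator $[X,Y]$ of elements of $\X(\A)$. The only point needing a line of justification is that $[\cdot,\cdot]$ on $\X(\A)$ satisfies the Jacobi identity — this follows because $[X,Y]^i=X(Y^i)-Y(X^i)$ is defined via the derivation action on $\A$, and derivations satisfy Jacobi. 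So this part is essentially formal once torsion-freeness is in hand.

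For the second Bianchi identity (\ref{eq:secondBianchi}) the real content is the commutator relation $[\nabla_X,\nabla_Y]=R(X,Y)+\nabla_{[X,Y]}$ applied one level up, i.e. to the covariant derivative acting on the tensor $R$ itself. The cleanest route is to pick, at the ``point'' in question, a convenient frame — here, since $\A$ is a field, one cannot literally use Riemann normal coordinates, but one can use the ambient description: write $\nabla_k Y^i=\D^{il}\D_k(Y_l)$ and push everything down to $\nablah^i=\D^i$ acting on $\reals^m$-valued quantities, where the ambient connection is the flat one. In that description $R$ arises purely from the failure of $\D^{ij}$ (equivalently $\Pi^{ij}$) to be covariantly constant, exactly as in Proposition \ref{prop:curvature}, and the second Bianchi identity becomes a differential identity among the $\Pi^{ij}$ and their $\D$-derivatives that can be verified by expanding both sides and repeatedly applying Lemma \ref{lemma:DDSymmetric} to swap the order of two $\D$'s contracted into the Poisson tensor. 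The main obstacle I anticipate is precisely this bookkeeping: one must carefully track which pairs of $\nablah$'s are contracted with tangential vectors (so that Lemma \ref{lemma:DDSymmetric}, or the Proposition \ref{prop:effectivelySymmetric} analogue, applies and they commute up to a curvature term) versus which are not, and ensure no spurious normal-component terms survive.

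An alternative, more self-contained route that avoids the ambient detour is to establish directly the algebraic Ricci-type identity $(\nabla_X\nabla_Y-\nabla_Y\nabla_X-\nabla_{[X,Y]})W=R(X,Y)W$ for $W\in\X(\A)$ from the definition, then apply it with $W$ replaced by the tensor components $R(Z,V)$-valued object, i.e. differentiate the defining relation for $R$ and antisymmetrize in $X,Y,Z$. After antisymmetrization, the terms involving $\nabla_{[X,Y]}$ pair up with the first Bianchi identity (already proved) and with Jacobi for $[\cdot,\cdot]$, leaving zero. This is the approach I would actually write up, since it uses only properties already proven in the excerpt (Propositions on torsion and metricity, the definition of $R$, and (\ref{eq:firstBianchi})) and does not require re-deriving the relation to $\Pi$. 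The hard part in this version is organizing the antisymmetrization so that the cancellation is manifest rather than a brute-force twelve-term expansion — I would handle it by first proving the operator identity $\sum_{\mathrm{cyc}}[\nabla_X,[\nabla_Y,\nabla_Z]]=0$ (Jacobi for the commutator of the connection operators, which is automatic) and then extracting (\ref{eq:secondBianchi}) by feeding in $V\in\X(\A)$ and using torsion-freeness to convert the $\nabla_{[X,Y]}$ pieces into covariant derivatives of $R$.
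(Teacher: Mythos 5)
Your proposal is correct and follows essentially the same route as the paper: the first identity comes from torsion-freeness plus the Jacobi identity for $[\cdot,\cdot]$ on $\X(\A)$, and the second from the purely formal commutator manipulation of $\nabla$ together with the tensorial definition of $\nabla R$ — the paper's final step of ``inserting the definition of $R$ and using $[[X,Y],Z]+\mathrm{cycl.}=0$'' is exactly your operator identity $\sum_{\mathrm{cyc}}[\nabla_X,[\nabla_Y,\nabla_Z]]=0$ read in reverse, so no detour through $\Pi$ or Lemma \ref{lemma:DDSymmetric} is needed. (One small correction: in the second identity the $R(\cdot,[\cdot,\cdot])$ terms cancel against the terms generated by passing from $[\nabla_X,R(Y,Z)]$ to $(\nabla_XR)(Y,Z,\cdot)$ using antisymmetry of $R$ in its first two slots, not via the first Bianchi identity.)
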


\begin{proof}
  The first Bianchi identity (\ref{eq:firstBianchi}) is proven by
  acting with $\nabla_Z$ on the torsion free condition
  $\nabla_XY-\nabla_YX-[X,Y]=0$, and then summing over cyclic
  permutations of $X,Y,Z$. Since $[[X,Y],Z]+[[Y,Z],X]+[[Z,X],Y]=0$,
  the desired result follows.  The second identity is obtained by a
  cyclic permutation (in $X,Y,Z$) of
  $R\paraa{\nabla_XY-\nabla_YX-[X,Y],Z,V}=0$. One has
  \begin{align*}
    0&=R\paraa{\nabla_XY-\nabla_YX-[X,Y],Z,V}+\text{cycl.}\\
    &=R(\nabla_ZX,Y,V)+R(X,\nabla_Z Y,V)-R([X,Y],Z,V) +\text{cycl.}
  \end{align*}
  On the other hand, one has
  \begin{align*}
    (\nabla_ZR)(X,Y,V) &= \nabla_ZR(X,Y,V) -R(\nabla_Z X,Y,V)\\
    &\qquad-R(X,\nabla_ZY,V)
    -R(X,Y,\nabla_ZV),
  \end{align*}
  and substituting this into the previous equation yields
  \begin{align*}
    0 = \nabla_ZR(X,Y,V)-\paraa{\nabla_ZR}(X,Y,V)-R(X,Y,\nabla_ZV)
    -R([X,Y],Z,V)+\text{cycl.}
  \end{align*}
  After inserting the definition of $R$, and using that
  $[[X,Y],Z]+\text{cycl.}=0$, the second Bianchi identity follows.
\end{proof}

\noindent The following proposition shows that the usual symmetries of
the curvature tensor also hold in the algebraic setting.

\begin{proposition}
  The curvature $R$ of an almost \KP\ algebra has the following
  properties
  \begin{align}
    &R(X,Y,Z,V) = -R(X,Y,V,Z) = -R(Y,X,Z,V)\label{eq:Rsymmetry1}\\
    &R(X,Y,Z,V) = R(Z,V,X,Y)\label{eq:Rsymmetry2}
  \end{align}
  for all $X,Y,Z,V\in\X(\A)$.
\end{proposition}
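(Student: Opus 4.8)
The plan is to follow the classical derivations of the curvature symmetries, feeding in the structural facts already established for almost \KP\ algebras. The antisymmetry $R(X,Y,Z,V) = -R(X,Y,V,Z)$ in (\ref{eq:Rsymmetry1}) is immediate from the definition: $R(X,Y,Z,V) = (R(Z,V)Y,X)$, and the operator $R(Z,V) = \nabla_Z\nabla_V - \nabla_V\nabla_Z - \nabla_{[Z,V]}$ changes sign under $Z\leftrightarrow V$, since $[Z,V] = -[V,Z]$.

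For the antisymmetry $R(X,Y,Z,V) = -R(Y,X,Z,V)$, equivalently $(R(Z,V)Y,X) = -(R(Z,V)X,Y)$, I would first record the auxiliary fact that the curvature annihilates $\A$: for all $u\in\A$ and $X,Y\in\X(\A)$,
\begin{align*}
  \nabla_X\nabla_Y(u) - \nabla_Y\nabla_X(u) - \nabla_{[X,Y]}(u) = 0 .
\end{align*}
Since $\nabla_X(u) = X(u) = X^i\D_i(u)$ and each $\D_i$ is a derivation, the left-hand side collapses to $X^iY^j[\D_i,\D_j](u)$; using $X^i = \D^i_aX^a$, $Y^j = \D^j_bY^b$ (legitimate because $X,Y\in\X(\A)$, so $\D(X)=X$ and $\D(Y)=Y$) together with $\D^{ik} = \frac{1}{\gamma^2}\P^{km}\P^{im}$, this becomes a multiple of $\P^{im}\P^{jn}[\D_i,\D_j](u)$, which vanishes by Lemma \ref{lemma:DDSymmetric}. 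Granting this, I would invoke the metric-connection property (Proposition \ref{prop:metricConnection}) in the form $(\nabla_XY,Y) = \tfrac{1}{2}\nabla_X(Y,Y) = \tfrac{1}{2} X(Y,Y)$, which after a short computation yields
\begin{align*}
  (R(Z,V)Y,Y) = \tfrac{1}{2}\bracketb{Z\paraa{V(Y,Y)} - V\paraa{Z(Y,Y)} - [Z,V](Y,Y)} = 0 ,
\end{align*}
the last step being the auxiliary fact applied to $u=(Y,Y)\in\A$. Polarizing $(R(Z,V)(Y+X),Y+X)=0$ and using symmetry of $(\cdot,\cdot)$ then gives $(R(Z,V)Y,X) + (R(Z,V)X,Y)=0$, which is the claim.

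With both antisymmetries in (\ref{eq:Rsymmetry1}) and the first Bianchi identity (\ref{eq:firstBianchi}) available, the pair symmetry (\ref{eq:Rsymmetry2}) follows by the standard combinatorial argument: write (\ref{eq:firstBianchi}) for the four cyclic permutations of $(X,Y,Z,V)$, add the four identities with alternating signs, and repeatedly apply (\ref{eq:Rsymmetry1}) to cancel terms pairwise; what remains is $2R(X,Y,Z,V) - 2R(Z,V,X,Y)=0$. The only genuinely new ingredient is the auxiliary fact that $R$ annihilates $\A$, and the point requiring care is that the substitutions $X^i = \D^i_aX^a$ are valid only on $\X(\A)$, so one must keep all arguments in $\X(\A)$ and check that the first-order operators $W(\cdot)$ act as honest derivations on $\A$, justifying the Leibniz manipulations in the metric-compatibility step.
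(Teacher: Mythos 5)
Your proof is correct and follows essentially the same route as the paper's: the last-slot antisymmetry from the definition, the first-slot antisymmetry from metric compatibility combined with the fact that $\nabla_X\nabla_Y(u)-\nabla_Y\nabla_X(u)-\nabla_{[X,Y]}(u)=0$ (which the paper asserts as ``easy to show'' and you correctly reduce to Lemma \ref{lemma:DDSymmetric}), and the pair symmetry via the standard algebraic consequence of (\ref{eq:Rsymmetry1}) and the first Bianchi identity. The only differences are cosmetic --- you polarize $(R(Z,V)Y,Y)=0$ where the paper sets $u=(X,Y)$ directly, and you sketch the combinatorial argument the paper cites from Kobayashi--Nomizu.
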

  
\begin{proof}
  The first identity, $R(X,Y,Z,V)=-R(X,Y,V,Z)$ follows directly from
  the definition of $R$. Let us now prove that
  $R(X,Y,Z,V)=-R(Y,X,Z,V)$. It is easy to show that for any $u\in\A$
  it holds that
  $\nabla_X\nabla_Y(u)-\nabla_Y\nabla_X(u)-\nabla_{[X,Y]}(u)=0$. By setting
  $u=(X,Y)$ one obtains
  \begin{align*}
    \nabla_Z\nabla_V(X,Y)-\nabla_Z\nabla_V(X,Y)-\nabla_{[Z,V]}(X,Y)=0,
  \end{align*}
  and since $\nabla$ is a metric connection, it follows that
  \begin{align*}
    &\nabla_Z\bracketb{(\nabla_VX,Y)+(X,\nabla_VY)}
    -\nabla_V\bracketb{(\nabla_ZX,Y)+(X,\nabla_ZY)}\\
    &\qquad
    -(\nabla_{[Z,V]}X,Y)-(X,\nabla_{[Z,V]}Y)=0.
  \end{align*}
  A further expansion of the derivatives yields
  \begin{align*}
    &(\nabla_Z\nabla_VX,Y)+(X,\nabla_Z\nabla_VY)
    -(\nabla_V\nabla_Z X,Y)-(X,\nabla_V\nabla_ZY)\\
    &\qquad
    -(\nabla_{[Z,V]}X,Y)-(X,\nabla_{[Z,V]}Y)=0,
  \end{align*}
  which is equivalent to
  \begin{align*}
    (R(Z,V)X,Y) = -(R(Z,V)Y,X).
  \end{align*}
  It is a standard algebraic result that any quadri-linear map
  satisfying (\ref{eq:Rsymmetry1}) and (\ref{eq:firstBianchi}) also
  satisfies (\ref{eq:Rsymmetry2}) \cite{kn:foundationsDiffGeometryI}.
\end{proof}

\noindent One can now derive Gauss' formula, in the form of Proposition
\ref{prop:curvature}.

\begin{proposition}\label{prop:GaussFormula}
  Let $\A$ be an almost \KP\ algebra with curvature tensor $R$. For
  $X,Y,Z\in\X(\A)$ it holds that
  \begin{align*}
    R(X,Y)Z^i = X^kY^lZ^j\D^{in}\bracketb{
      \paraa{\D_k\Pi_n^m}\paraa{\D_l\Pi_{jm}}
      -\paraa{\D_l\Pi_n^m}\paraa{\D_k\Pi_{jm}}
    }.
  \end{align*}
\end{proposition}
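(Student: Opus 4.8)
The plan is to compute $R(X,Y)Z = \nabla_X\nabla_Y Z - \nabla_Y\nabla_X Z - \nabla_{[X,Y]}Z$ directly from the definition $\nabla_k Y^i = \D^{il}\D_k(Y_l)$, and to massage the resulting second-order $\D$-expressions until the antisymmetric pieces organize into the stated $\Pi$-derivative form. First I would write, for $X,Y,Z\in\X(\A)$,
\begin{align*}
  \nabla_X\nabla_Y Z^i = \D^{in}X^k\D_k\parab{\D_n^m Y^l\D_l(Z_m)},
\end{align*}
and expand the inner $\D_k$ by the Leibniz rule, producing three terms: one with $\D_k$ hitting $Y^l$, one with $\D_k$ hitting $\D_n^m$, and one with $\D_k$ hitting $\D_l(Z_m)$. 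After antisymmetrizing in $X\leftrightarrow Y$, the term where $\D_k$ hits $Y^l$ together with the $[X,Y]$-term should combine (using that $\nabla$ is torsion-free and $\D([X,Y])=[X,Y]$, already established) to cancel, leaving a term of the form $\D^{in}X^kY^lZ^j\,[\D_k,\D_l]\paraa{\cdots}$ acting on the relevant factor. Here Lemma~\ref{lemma:DDSymmetric} is the crucial tool: $[\D^i,\D^j](u)\P_{ik}\P_{jl}=0$, which after contraction with $\D$'s says that second $\D$-derivatives acting on any $u\in\A$ are symmetric once projected onto $\X(\A)$. This kills the term where the commutator lands on $\D_l(Z_m)$ (a scalar), so the only surviving contribution is the term where the antisymmetrized $\D_k\D_l$ acts on the projector components $\D_n^m$, equivalently on $\Pi_n^m = \delta_n^m - \D_n^m$.

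Next I would rewrite everything in terms of $\Pi$ rather than $\D$. Using $Z\in\X(\A)$ so that $\D_{jm}Z^j$-type contractions reproduce $Z$, and using $\Pi\D = 0$, $\D^2=\D$, the surviving term becomes
\begin{align*}
  \D^{in}X^kY^lZ^j\bracketb{\paraa{\D_k\D_n^m}\paraa{\D_l\D_{jm}} - \paraa{\D_l\D_n^m}\paraa{\D_k\D_{jm}}}
\end{align*}
up to terms symmetric in $k\leftrightarrow l$ (which drop out), and replacing $\D = \mid - \Pi$ in each factor, the pieces linear in $\Pi$ cancel against each other by the antisymmetry, leaving exactly $\paraa{\D_k\Pi_n^m}\paraa{\D_l\Pi_{jm}} - \paraa{\D_l\Pi_n^m}\paraa{\D_k\Pi_{jm}}$ as claimed. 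The identity $\D_k(\Pi_{jm})\D^{jm'}=\cdots$ type manipulations, i.e. that $\Pi\cdot(\D\Pi) = \D\Pi$-ish relations coming from $\Pi^2=\Pi$, will be needed to move the free index $j$ onto $Z$ correctly; these follow from differentiating $\Pi^2=\Pi$ and $\D\Pi=0$.

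The main obstacle I anticipate is bookkeeping the Leibniz expansion carefully enough to see the three-way cancellation: specifically, verifying that the ``$\D_k$ hits $Y^l$'' contribution, when combined with $\nabla_{[X,Y]}Z$, produces precisely $\D^{in}Z^j\,\D_j(Z_m)$-free remainder — this is where one uses that $[X,Y]^i = X^l\D_l(Y^i) - Y^l\D_l(X^i)$ and $\D([X,Y])=[X,Y]$ in tandem. The second delicate point is justifying that the commutator term acting on the scalar $\D_l(Z_m)$ vanishes: Lemma~\ref{lemma:DDSymmetric} gives symmetry of $[\D^i,\D^j](u)$ only after contraction with $\P_{ik}\P_{jl}$, so I must check that the index structure in the expansion genuinely presents this contracted form — i.e. that the two outer $\D^{in}$, $X^k$, $Y^l$ conspire (via $\D^{in}=\frac{1}{\gamma^2}\P^{i}_{\ k}\P^{kn}/\cdots$, more precisely via $\D = -\gamma^{-2}\P^2$ at the level of the defining relation) to supply the requisite $\P\P$ contraction. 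Once those two cancellations are in hand, the remaining algebra is routine substitution of $\D = \mid-\Pi$.
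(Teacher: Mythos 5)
Your plan follows the paper's proof essentially step for step: Leibniz-expand $\nabla_X\nabla_YZ^i=\D^{ik}X^l\D_l(\D_{km}Y^n\D_n(Z^m))$, recognize the terms where $\D$ hits $Y^n$ (resp. $X^l$) as $\nabla_{[X,Y]}Z^i$, kill the symmetric second-derivative term $X^lY^n[\D_l,\D_n](Z^m)$ via Lemma~\ref{lemma:DDSymmetric} (where, to be precise, the requisite $\P\P$ contraction is supplied by the tangentiality of $X$ and $Y$, i.e. $X=\D(X)$ and $Y=\D(Y)$ with $\D=-\gamma^{-2}\P^2$, rather than by the outer $\D^{in}$), and then convert the surviving $\D_l(\D_{km})$ terms using $\D_l(\D_{km})=-\D_l(\Pi_{km})$, $\Pi_{km}=\Pi_{kj}\Pi^j_m$, $\D^{ik}\Pi_{kj}=0$ and $\Pi^j_mZ^m=0$ to move the free index onto $Z$, exactly as in the paper. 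The only cosmetic slip is your remark that ``pieces linear in $\Pi$ cancel by antisymmetry'': no such cancellation is needed, since $\D_k(\D^m_n)=-\D_k(\Pi^m_n)$ holds identically because $\D_k(\delta^m_n)=0$.
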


\begin{proof}
  One computes that
  \begin{align*}
    &(\nabla_X\nabla_YZ)^i =
    \D^{ik}X^l\D_l\paraa{\D_{km}Y^n\D_n(Z^m)}\\
    &\quad= \D^{ik}X^l\D_l\paraa{\D_{km}}Y^n\D_n(Z^m)
    +\D^{i}_mX^l\D_l(Y^n)\D_n(Z^m)
    +\D^{i}_mX^lY^n\D_l\D_n(Z^m).
  \end{align*}
  Now, one uses Lemma \ref{lemma:DDSymmetric} in the last term to
  obtain
  \begin{align*}
    \D^i_mX^lY^n\D_l\D_n(Z^m) &=
    \D^i_mX^lY^n\D_n\D_l(Z^m)
    = \D^{ik}\D_{km}X^lY^n\D_n\D_l(Z^m)\\
    &= \D^{ik}Y^n\D_n\paraa{\D_{km}X^l\D_l(Z^m)}
    -\D^{ik}Y^n\D_n(\D_{km}X^l)\D_l(Z^m)\\
    &= \paraa{\nabla_Y\nabla_XZ}^i-\D^{ik}Y^n\D_n(\D_{km}X^l)\D_l(Z^m),
  \end{align*}
  which implies that
  \begin{align*}
    [\nabla_X,\nabla_Y]Z^i &= \D^{ik}X^l\D_l\paraa{\D_{km}}Y^n\D_n(Z^m)
    -\D^{ik}Y^n\D_n(\D_{km})X^l\D_l(Z^m)\\
    &\qquad+\D^{i}_mX^l\D_l(Y^n)\D_n(Z^m)
    -\D^{i}_mY^n\D_n(X^l)\D_l(Z^m)
  \end{align*}
  One easily checks that the two last terms equal $\nabla_{[X,Y]}Z^i$,
  and therefore it holds that
  \begin{align*}
    R(X,Y)Z^i = \D^{ik}X^l\D_l\paraa{\D_{km}}Y^n\D_n(Z^m)
    -\D^{ik}Y^n\D_n(\D_{km})X^l\D_l(Z^m)
  \end{align*}
  Let us consider the first of these two terms (as the other one is
  obtained by interchanging $X$ and $Y$)
  \begin{align*}
    &\D^{ik}X^l\D_l\paraa{\D_{km}}Y^n\D_n(Z^m) =
    -\D^{ik}X^l\D_l(\Pi_{km})Y^n\D_n(Z^m)\\
    &\qquad=-\D^{ik}X^l\D_l(\Pi_{kj}\Pi^j_m)Y^n\D_n(Z^m)
    =-\D^{ik}X^l\Pi^j_m\D_l(\Pi_{kj})Y^n\D_n(Z^m)\\
    &\qquad=\D^{ik}X^lZ^m\D_l(\Pi_{kj})Y^n\D_n(\Pi^j_m)
    =X^lY^nZ^m\parab{\D^{ik}\D_l(\Pi_{kj})\D_n(\Pi^j_m)},
  \end{align*}
  and by inserting this in the previous expression one obtains the
  stated formula.
\end{proof}

\noindent It is convenient to also develop an index notation for the
covariant derivative. Hence, we extend the definition from vectors to
tensors through
\begin{align}\label{eq:covariantTensorDef}
  \nabla_iT^{k_1\cdots k_N}_{l_1\cdots l_M}
  =\D^{k_1}_{k_1'}\cdots\D^{k_N}_{k_N'}
  \D^{l_1'}_{l_1}\cdots\D^{l_N'}_{l_N}
  \D_i\parab{T^{k_1'\cdots k_N'}_{l_1'\cdots l_M'}}.
\end{align}
Just as Leibnitz rule holds only for vectors in $\X(\A)$, as was shown
in Proposition \ref{prop:nablaProperties}, the corresponding rule for
tensors will hold as long as they are invariant under projections, i.e.
\begin{align}
  T^{k_1\cdots k_N}\D_{k_m}^l = T^{k_1\cdots k_{m-1} lk_{m+1}\cdots k_N},
\end{align}
for $m=1,\ldots,N$. Such tensors will be called \emph{tangential}, and
it is clear from the definition that the covariant derivative of a
tangential tensor is again a tangential tensor. For instance, one
computes that for tangential tensors $T_{kl}$ and $X^i$
\begin{align*}
  \nabla_i(T_{kl}X^l) &= \D_k^m\D_i\paraa{T_{ml}X^l}
  = \D_k^mT_{ml}\D_i(X^l) + \D_k^mX^l\D_i(T_{ml})\\
  &=T_{kl'}\D^{l'}_l\D_i(X^l) + \D^m_kX^{l'}\D_{l'}^l\D_i(T_{ml})\\
  &=T_{kl'}\nabla_iX^{l'} + X^{l'}\nabla_i(T_{kl'}).
\end{align*}

\noindent Note that the above definition in
(\ref{eq:covariantTensorDef}) coincides, for tangential tensors, with
the previous index-free definition. For instance, one easily computes
that
\begin{align*}
  \paraa{\nabla_XT}(Y,Z)\equiv\nabla_X\paraa{T(Y,Z)} 
  - T(\nabla_XY,Z)-T(Y,\nabla_XZ)
  = X^iY^jZ^k\nabla_iT_{jk}
\end{align*}
for $X,Y,Z\in\X(\A)$ and $T$ a tangential tensor. With this notation,
one computes that the following relation holds
\begin{align}\label{eq:curvatureInComponents}
  \paraa{R(X,Y)Z}^i = R^i_{jkl}Z^jX^kY^l =
  X^kY^l\paraa{\nabla_k\nabla_lZ^i-\nabla_l\nabla_kZ^i}.
\end{align}    

\noindent Let us also note that the Codazzi-Mainardi equations,
in the form of Proposition \ref{prop:CMequations} (with $M=\reals^m$)
\begin{align*}
  0 &=
  X_iY_jZ_k\bracketb{\D^i\D^k(N_A^j) - \D^j\D^k(N_A^i)
    +\D^k\D^j(N_A^i)-\D^k\D^i(N_A^j)
  },
\end{align*}
are satisfied due to Lemma \ref{lemma:DDSymmetric}.

It is a standard theorem in differential geometry that if the
sectional curvature only depends on the point (and not on the choice
of tangent plane) then the sectional curvature is constant (if the
dimension is greater than or equal to three). In the following, we
shall derive an analogous theorem for almost \KP\ algebras. Let us
first define the sectional curvature.

\begin{definition}
  Let $\A$ be an almost \KP\ algebra with curvature tensor $R$. For
  any $X,Y\in\X(A)$, the \emph{sectional curvature} is defined as
  \begin{align*}
    K(X,Y) = \frac{R(X,Y,X,Y)}{(X,X)(Y,Y)-(X,Y)^2}.
  \end{align*}
\end{definition}

\noindent For almost \KP\ algebras, if $K(X,Y)$ is independent of $X$
and $Y$, then the sectional curvature is in the center of the Poisson
algebra.

\begin{proposition}
  Let $\A$ be an almost \KP\ algebra with curvature tensor $R$ and
  geometric dimension $n\geq 3$. If $K(X,Y)=k\in\A$ for all $X,Y\in\X(\A)$
  then $\{k,u\}=0$ for all $u\in\A$.
\end{proposition}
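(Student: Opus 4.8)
The plan is to adapt the classical Schur-type argument to the algebraic setting. First I would fix the hypothesis $K(X,Y)=k$ for all $X,Y\in\X(\A)$ and rewrite it as the identity
\begin{align*}
  R(X,Y,X,Y) = k\bracketb{(X,X)(Y,Y)-(X,Y)^2}
\end{align*}
for all $X,Y\in\X(\A)$. By the multilinearity of $R$ (over $\A$, in the sense of Proposition \ref{prop:nablaProperties}) and the symmetries \eqref{eq:Rsymmetry1}, \eqref{eq:Rsymmetry2}, polarizing this identity in each slot shows that
\begin{align*}
  R(X,Y,Z,V) = k\bracketb{(X,Z)(Y,V)-(X,V)(Y,Z)}
\end{align*}
for all $X,Y,Z,V\in\X(\A)$; this is the purely algebraic content of ``constant sectional curvature''. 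The polarization is routine: it only uses the bilinear form $(\cdot,\cdot)$ and the curvature symmetries, all of which are available in an almost \KP\ algebra.

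The next step is to exploit the second Bianchi identity \eqref{eq:secondBianchi}. Since $k\in\A$, applying $\nabla_W$ to the constant-curvature expression and using that $\nabla$ is a metric connection (Proposition \ref{prop:metricConnection}), so that $\nabla_W(X,Z)=(\nabla_WX,Z)+(X,\nabla_WZ)$, gives
\begin{align*}
  \paraa{\nabla_WR}(X,Y,Z,V) = \paraa{\nabla_W(k)}\bracketb{(X,Z)(Y,V)-(X,V)(Y,Z)},
\end{align*}
because all the connection terms on $(X,Z)$ etc. are cancelled by the corresponding terms in the definition of $(\nabla_WR)$. Feeding this into \eqref{eq:secondBianchi} with the cyclic sum over $W,X,Y$ (keeping $Z,V$ fixed) yields
\begin{align*}
  0 = \nabla_W(k)\bracketb{(X,Z)(Y,V)-(X,V)(Y,Z)} + \text{cycl.}\ (W,X,Y).
\end{align*}
Now I would contract: choose $Y=V$ and sum $Y$ over an orthonormal-type basis of $\X(\A)$ (more precisely, use the trace over $\X(\A)$, i.e. contract with $\D^{jl}$ in components), exactly as in the Riemannian proof. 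With geometric dimension $n$, two of the three cyclic terms collapse and the surviving identity reads something like $(n-1)\nabla_W(k)(X,Z) - (\text{terms}) = 0$; cleaning up gives $(n-2)\bracketb{\nabla_W(k)(X,Z)-\nabla_X(k)(W,Z)}=0$ for all $W,X,Z\in\X(\A)$. Since $n\ge 3$, we may cancel $n-2$, and then choosing $Z=W$ with $(W,W)$ invertible and $(X,W)=0$ forces $\nabla_X(k)=X^i\D_i(k)=0$ for all $X\in\X(\A)$; since $\X(\A)$ is spanned by the $\D(\d_i)$ this says $\D_i(k)=0$ for all $i$ after projecting, hence $\{k,x^j\}=0$ for all $j$ (recall $\D^i(k)=\frac{1}{\gamma^2}\{k,x^l\}\P^i{}_l$, and $\P$ restricted to $\X(\A)$ is invertible up to $\gamma^2$), and therefore $\{k,u\}=0$ for all $u\in\A$ by the Leibniz rule.

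The step I expect to be the main obstacle is the contraction/trace argument in the algebraic setting: in Riemannian geometry one freely picks an orthonormal basis and sums, but here $\X(\A)$ is a module over a field with a possibly indefinite or degenerate-looking bilinear form, so I must be careful that the ``trace over $\X(\A)$'' is well-defined and behaves correctly. The clean way is to perform all contractions using the projector $\D^{ik}$ (which is the algebraic analogue of $g^{ab}(\d_ax^i)(\d_bx^k)$ from \eqref{eq:Diformula}) rather than an abstract basis, checking that $\D^{ik}(e_i,e_k)=n$ in the appropriate sense and that tangential tensors contract as expected; once this bookkeeping is set up, the cancellations mirror the classical case. A secondary subtlety is ensuring that ``cancel $n-2$'' is legitimate: since $\A$ is a field of fractions and $n$ is an honest positive integer, $n-2$ is an invertible element of $\A$ whenever $n\neq 2$, so no issue arises for $n\ge 3$.
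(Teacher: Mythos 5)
Your argument is correct and follows the same skeleton as the paper's proof: establish $R(X,Y,Z,V)=k\bracketa{(X,Z)(Y,V)-(X,V)(Y,Z)}$ (the paper invokes the standard quadrilinear-map lemma from Kobayashi--Nomizu rather than polarizing by hand, but the content is identical), differentiate and feed the result into the second Bianchi identity (\ref{eq:secondBianchi}), deduce $\nabla_X(k)=0$ for all $X\in\X(\A)$, and finally convert this to $\{k,x^j\}=0$ by multiplying with $\P$ and using the defining relation (\ref{eq:defAlmostKP}). Where you genuinely diverge is the extraction of $\nabla_X(k)=0$ from the cyclic identity: you run the classical Schur contraction, tracing over one pair of slots with $\D^{jl}$ to produce the factor $(n-2)\bracketa{\nabla_W(k)(X,Z)-\nabla_X(k)(W,Z)}$ and then cancelling $n-2$. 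The paper avoids any trace: for a given $X$ it picks $Y,Z\in\X(\A)$ with $(X,Y)=(X,Z)=(Y,Z)=0$ (possible since $n\geq 3$), sets $U=Z$, $V=Y$ in the cyclic identity, and reads off $\nabla_X(k)(Y,Y)(Z,Z)=0$ directly. Your worry about the trace is legitimate but resolvable exactly as you suggest — contract with the tangential projector $\D^{jl}$, using $\Tr\D=n$ and $\D^{jl}\D_{jk}=\D^j_k$ — and in fact once you have $(n-2)\bracketa{\nabla_m(k)\D_{ik}-\nabla_i(k)\D_{mk}}=0$ you could contract once more with $\D^{ik}$ to get $(n-1)\nabla_m(k)=0$ outright, which spares you the need to find $W$ with $(W,W)$ invertible and $(X,W)=0$. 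Note that both your final specialization and the paper's rely on the restricted bilinear form being anisotropic enough that products like $(Y,Y)(Z,Z)$ are nonzero in the field $\A$; this is an implicit assumption shared by the paper, not a defect you introduced. In short: same theorem, same architecture, but your trace-based Schur step is the textbook Riemannian route, while the paper's orthogonal-vector trick is slightly more elementary in the algebraic setting because it never needs the trace machinery at all.
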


\begin{proof}
  It is a standard algebraic result that if $R$ and $R'$ are two
  quadri-linear maps, satisfying (\ref{eq:firstBianchi}) and
  (\ref{eq:Rsymmetry1}), and $R(X,Y,X,Y)=R'(X,Y,X,Y)$ for all
  $X,Y\in\X(\A)$ then $R(X,Y,Z,V)=R'(X,Y,Z,V)$ for all
  $X,Y,Z,V\in\X(\A)$ \cite{kn:foundationsDiffGeometryI}. Hence, if we
  define
  \begin{align*}
    R'(X,Y,Z,V) = (X,Z)(Y,V)-(X,V)(Y,Z),
  \end{align*}
  for which $K(X,Y)=1$ for all $X,Y\in\X(\A)$, it follows that
  $R(X,Y,Z,V)=kR'(X,Y,Z,V)$. Since $\nabla$ is a metric connection one
  has $(\nabla_UR')(X,Y,Z,V)=0$ and
  \begin{align*}
    \paraa{\nabla_UR}(X,Y,Z,V) = \paraa{\nabla_U kR'}(X,Y,Z,V)
    =\nabla_U(k)R'(X,Y,Z,V).
  \end{align*}
  If we sum this identity over cyclic permutations of $U,X,Y$ the left
  hand side will vanish due to the second Bianchi identity and one is
  left with
  \begin{align*}
    0 &= \nabla_U(k)\parab{(X,Z)(Y,V)-(X,V)(Y,Z)}\\
    &\quad+\nabla_X(k)\parab{(Y,Z)(U,V)-(Y,V)(U,Z)}\\
    &\quad+\nabla_Y(k)\parab{(U,Z)(X,V)-(U,V)(X,Z)}.
  \end{align*}
  Given an arbitrary $X\in\X(\A)$ one can always find $Y,Z\in\X(\A)$
  such that $(X,Y)=(X,Z)=(Y,Z)=0$, since the geometric dimension
  of $\A$ is at least 3. For such vectors, the above relation becomes
  \begin{align*}
    -\nabla_X(k)(Y,V)(U,Z)+\nabla_Y(k)(U,Z)(X,V) = 0,
  \end{align*}
  and for $U=Z$ and $V=Y$ one obtains
  \begin{align*}
    \nabla_X(k)(Y,Y)(Z,Z) = 0,
  \end{align*}
  which implies that $\nabla_X(k)=0$. Thus, for all $X\in\X(\A)$ it
  holds that $X^i\D_i(k)=0$, which is equivalent to $\D^{li}\D_i(k)=0$
  for $l=1,\ldots,m$. Writing out this equation yields
  \begin{align*}
    0 = \D^{li}\D_i(k)=\D^l(k) = \frac{1}{\gamma^2}\P^{lm}{\P^i}_m(\d_ik),
  \end{align*}
  and multiplying by $\P_{lj}$ gives
  \begin{align*}
    0 = \frac{1}{\gamma^2}\P_{lj}\P^{lm}{\P^i}_m(\d_ik)
    ={\P^i}_j\d_ik = \{k,x_j\}
  \end{align*}
  which implies that $\{k,u\}=0$ for all $u\in\A$.
\end{proof}

\subsection{Tracial states, Stoke's theorem and orderings}

\noindent As an important illustration of how the developed algebraic
techniques can be used, we aim to prove that a bound on the Ricci
curvature induces a bound on the eigenvalues of the Laplace operator,
which is a standard result for compact Riemannian manifolds. To
achieve this goal we shall, in this section, introduce concepts of
integration and ordering on $\A$.

\begin{definition}
  Let $\A$ be an almost \KP\ algebra. A \emph{state}\footnote{A
    slightly more appropriate $\ast$-algebraic term is \emph{positive
      linear functional}, but for simplicity we have chosen a less
    cumbersome terminology.} on $\A$ is a $\complex$-linear map
  $\ints:\A\to\complex$ such that
  \begin{equation}
    \ints a^\ast = \overline{\ints a}
    \quad\text{and}\quad
    \ints a^\ast a \geq 0
  \end{equation}
  for all $a\in\A$.  
\end{definition}

\noindent In a noncommutative $\ast$-algebra, a state that fulfills
$\ints XY=\ints YX$, or $\ints[X,Y]=0$, is called a \emph{tracial
  state}. An analogous extension to commutative Poisson algebras would
be to require $\ints\{a,b\}=0$. However, what one needs for
calculations is an equation in correspondence with the fact that the
integral (over a closed manifold) of the divergence of a vector field
is zero, which allows one to perform ``partial integration''.  In
Section \ref{sec:integration} it was shown that
$\int_\Sigma\nablah_iX^i=0$, which motivates the following definition.

\begin{definition}
  Let $\ints$ be a state on an almost \KP\ algebra. The state is
  called \emph{tracial} if
  \begin{align}
    \ints \nabla_iX^i = 0\label{eq:traceTotalDerivProp}
  \end{align}
  for all $X\in\X(\A)$.
\end{definition}

\noindent Note that in the case when a square root of $\gamma^2$
exists, and the algebra fulfills the additional condition
(\ref{eq:algebraCSparallell}), a tracial state in a \KP\ algebra
fulfills
\begin{align*}
  \ints \frac{1}{\gamma}\{x^i,X_i\} = 0,
\end{align*}
which is in analogy with a tracial state in a noncommutative algebra.

\begin{definition}
  An almost \KP\ algebra with a tracial state is called a
  \emph{geometric almost \KP\ algebra}.
\end{definition}

\noindent As usual, any state induces a $\complex$-valued sesquilinear form on $\A$ via
\begin{equation}
  \angles{u,v} = \ints u^\ast v,
\end{equation}
which may be extended to $\TSigma$ by setting 
\begin{equation*}
  \angles{X,Y} = \ints (X^\ast,Y).
\end{equation*}

\noindent Let us now introduce a preorder on $\A$.

\begin{definition}
  Let $\A$ be an almost \KP\ algebra. We say that $a$ is
  \emph{positive}, and write $a\geq 0$ if and only if
  \begin{align}\label{eq:partialOrderDef}
    a = \frac{\sum_{i=1}^N u_i^\ast u_i}
    {\sum_{k=1}^{N'} v_k^\ast v_k},
  \end{align}
  for some elements $u_i,v_k\in\A$. Moreover, we write $a\geq b$ whenever
  $a-b\geq 0$.
\end{definition}

\noindent Let us state some of the properties of this ordering.

\begin{proposition}
  The relation $\geq$ has the following properties
  \begin{enumerate}
  \item $a\geq a$,
  \item if $a\geq b$ and $b\geq c$ then $a\geq c$,
  \item if $a\geq b$ then $a+c\geq b+c$ for all $c\in\A$,
  \item if $a\geq 0$ and $b\geq 0$ then $ab\geq 0$,
  \end{enumerate}
  i.e. $\geq$ is a ring preorder.
\end{proposition}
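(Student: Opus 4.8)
The plan is to verify the four properties directly from the definition of $\geq$ in \eqref{eq:partialOrderDef}, treating the set of \emph{positive} elements as the collection of quotients of finite sums of hermitian squares. Throughout, the key facts I would use are that a sum of elements of the form $u^\ast u$ is again of that form in a commutative $\ast$-algebra (indeed, $\sum u_i^\ast u_i$ is a sum of hermitian squares, which is exactly the shape appearing in the numerator and denominator), and that since $\A$ is a field of fractions one may freely clear denominators and take common denominators. I expect each verification to be a short algebraic manipulation.

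For (1), I would note that $a - a = 0$, and $0 = (0^\ast 0)/(v^\ast v)$ for any nonzero $v\in\A$, so $0\geq 0$ and hence $a\geq a$. For (3), if $a - b \geq 0$ then $(a+c) - (b+c) = a - b \geq 0$, which is immediate. For (4), suppose $a = (\sum_i u_i^\ast u_i)/(\sum_k v_k^\ast v_k)$ and $b = (\sum_j p_j^\ast p_j)/(\sum_l q_l^\ast q_l)$; then $ab = (\sum_{i,j}(u_ip_j)^\ast(u_ip_j))/(\sum_{k,l}(v_kq_l)^\ast(v_kq_l))$ using commutativity to rearrange $u_i^\ast u_i p_j^\ast p_j = (u_ip_j)^\ast(u_ip_j)$, so $ab$ again has the required form and $ab\geq 0$.

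The one step requiring slightly more care is (2), transitivity: from $a - b \geq 0$ and $b - c \geq 0$ I must produce a representation of $a - c$ as a quotient of sums of hermitian squares. Here I would write $a - b = (\sum u_i^\ast u_i)/(\sum v_k^\ast v_k)$ and $b - c = (\sum p_j^\ast p_j)/(\sum q_l^\ast q_l)$, then add the fractions over the common denominator $(\sum v_k^\ast v_k)(\sum q_l^\ast q_l)$, which by (4)'s computation is itself a sum of hermitian squares; the numerator becomes $(\sum u_i^\ast u_i)(\sum q_l^\ast q_l) + (\sum p_j^\ast p_j)(\sum v_k^\ast v_k)$, a sum of products of sums of hermitian squares, hence again a sum of hermitian squares by the same rearrangement. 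Thus $a - c = (a-b) + (b-c) \geq 0$. The main (mild) obstacle is simply bookkeeping: making sure the class of numerators/denominators — finite sums of elements $w^\ast w$ — is closed under both addition and multiplication, which is where commutativity of $\A$ is essential, and observing that a nonempty such sum in the denominator is invertible in the field of fractions (we may always assume it is nonzero, e.g. by including a term $1 = 1^\ast 1$). Once closure under addition and multiplication is recorded, all four items follow mechanically, and the concluding remark that $\geq$ is a ring preorder is just the combination of (1)--(4).
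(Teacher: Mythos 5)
Your proposal is correct and follows essentially the same route as the paper: establish that the set of quotients of sums of hermitian squares is closed under addition and multiplication, then read off the four properties (the paper dismisses the closure facts as ``immediate from (\ref{eq:partialOrderDef})'' where you spell out the common-denominator and rearrangement computations explicitly). No further comment is needed.
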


\begin{proof}
  It is immediate from (\ref{eq:partialOrderDef}) that sums and
  products of positive elements are again positive elements.
  \textit{Reflexivity: $a\leq a$}. This is equivalent to $a-a\geq 0$,
  which is true since $0=0^\ast 0$. \textit{Transitivity.} Assume that
  $a\geq b$ and $b\geq c$. By definition, this means that $a-b$ and
  $b-c$ are positive, which implies that their sum, i.e. $a-c$, is
  positive. Thus, $a\geq c$. 
\end{proof}

\noindent In particular, one notes that $(X^\ast,X)\geq 0$ for all
$X\in\Der(\A)$, which enables us to prove the Cauchy--Schwarz
inequality for tensors.

\begin{proposition}
  Let $T^{ij}$ be a tangential tensor. Then it holds that
  \begin{align*}
    \paraa{\Tr\D}^2(T^{ij})^\ast T_{ij}\geq
    \paraa{\Tr\D}(\Tr T)^\ast(\Tr T).
  \end{align*}
\end{proposition}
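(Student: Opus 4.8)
The plan is to interpret the claimed inequality as an honest Cauchy--Schwarz statement for the scalar product on $\Der(\A)$ introduced in \eqref{eq:defScalarProd}, applied to two cleverly chosen tangential vectors. Write $D = \Tr\D = \D^i_i$ for brevity; since $\D$ is the tangential projector (recall \eqref{eq:defAlmostKP} makes $\D$ idempotent), $D$ is, morally, the ``geometric dimension'' $n$, but at the purely algebraic level it is just some element of $\A$. For a fixed tangential tensor $T^{ij}$ and each index $j$, consider the derivation $X_{(j)} = T^{ij}\d_i \in \X(\A)$; it lies in $\X(\A)$ precisely because $T$ is tangential, so $T^{ij}\D^k_i = T^{kj}$. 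Also consider the ``trace direction'' $E = \delta^{ij}\d_i$ projected tangentially, i.e. $E = \D^{ij}\d_j$... but the cleanest route is instead to apply the scalar-product inequality componentwise and then sum, so let me lay out the steps in that order.

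First I would record that for any $X,Y\in\Der(\A)$ one has $(X^\ast,X)\ge 0$ and $(Y^\ast,Y)\ge 0$ with respect to the preorder, and — the key algebraic input — the Cauchy--Schwarz inequality $(X^\ast,X)(Y^\ast,Y) \ge (X^\ast,Y)^\ast(X^\ast,Y)$ holds in the preorder $\geq$. This last point is the main obstacle: the usual Cauchy--Schwarz proof over $\reals$ or $\complex$ uses that $(X-\lambda Y, X-\lambda Y)\ge 0$ for a real parameter $\lambda$ and then optimizes; here the ``scalars'' are elements of $\A$, so I would instead clear denominators and argue that $(X^\ast,X)(Y^\ast,Y) - (X^\ast,Y)(Y^\ast,X)$ is a sum of squares over a square, using bilinearity and the hermiticity $\{u,v\}^\ast=\{u^\ast,v^\ast\}$ to control the cross terms — essentially expanding $\sum_{k,l}\big((X^\ast,X)(Y_k^\ast Y_l) - (X^\ast Y_k^\ast)(X Y_l)\big)$-type expressions. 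I expect this to reduce to the classical finite-dimensional Gram-matrix positivity, applied entry by entry in the field of fractions.

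Next, the plan is to choose the two vectors. Take $Y = (Y^i) = \delta^{i j_0}\cdot(\text{something})$ — more precisely, fix the tangential vector $W^i := \D^{i}_{k}\delta^{k\ell}(\cdots)$; the point is that I want $(X^\ast, Y)$ to produce $\Tr T$ and $(Y^\ast,Y)$ to produce $\Tr\D$. Concretely: for each pair of indices, apply Cauchy--Schwarz to $X^i = T^{ij}$ (with $j$ free) and the constant vector $Y^i=\delta^i_k$ (with $k$ free), then project. One gets, for each $j,k$,
\begin{align*}
  (T^{ij})^\ast T^i_{\ j'}\,\D^{k}_{k}\ \geq\ (T^{ij})^\ast\D^k_i\cdot(\text{conj.}),
\end{align*}
and summing over the free indices with the appropriate contractions — using tangentiality $T^{ij}\D^k_i=T^{kj}$ repeatedly, and $\D^i_i = \Tr\D$ — collapses the left side to $(\Tr\D)^2 (T^{ij})^\ast T_{ij}$ and the right side to $(\Tr\D)(\Tr T)^\ast(\Tr T)$. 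The bookkeeping step where the two factors of $\Tr\D$ on the left appear is where one must be careful not to double-count: one factor comes from the Cauchy--Schwarz ``$Y$-norm'' $(Y^\ast,Y) = \D^k_k$ and the other from re-expressing $\delta^k_i$-contractions against the tangential $T$ via $\D$.

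The final step is purely formal: assemble the per-index inequalities, invoke property (3) (additivity) and (4) (products of positives are positive, hence positive multiples preserve $\geq$) of the preorder from the preceding proposition to sum them, and read off the stated inequality. I would flag that the only genuinely new ingredient beyond linear algebra is the preorder Cauchy--Schwarz lemma; everything else is contraction gymnastics with the projector identities $\D^2=\D$, $\Tr\D=\D^i_i$, and tangentiality of $T$. If the preorder Cauchy--Schwarz turns out to need its own lemma, I would state and prove it first as: for $X,Y\in\Der(\A)$, $(X^\ast,X)(Y^\ast,Y)\ge (X^\ast,Y)(Y^\ast,X)$, proved by noting the difference equals $\big((X^\ast,X)(Y^\ast,Y)-(X^\ast,Y)(Y^\ast,X)\big)$ which, over the field of fractions, is a determinant of a positive semidefinite Gram matrix and hence a ratio of sums of squares.
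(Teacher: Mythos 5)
Your core idea is right: the statement is exactly a Cauchy--Schwarz inequality for the Frobenius-type pairing $\angles{S,T}=(S^{ij})^\ast T_{ij}$ applied to the pair $T^{ij}$ and $\D^{ij}$, using $\D^{ij}\D_{ij}=\Tr\D$ and $\D^{ij}T_{ij}=\Tr T$ for tangential $T$. The paper does not state a general Cauchy--Schwarz lemma; it simply writes down the sum-of-squares certificate for this one instance, namely the orthogonal decomposition $(\Tr\D)T^{ij}=\bigl((\Tr\D)T^{ij}-(\Tr T)\D^{ij}\bigr)+(\Tr T)\D^{ij}$, checks that the cross term vanishes, and reads off positivity of the first square. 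That is precisely the ``clear denominators and exhibit a sum of squares'' proof of Cauchy--Schwarz that you anticipate needing, so in substance the two arguments coincide; your version is more modular (a reusable preorder Cauchy--Schwarz lemma, provable by the Lagrange identity over the field of fractions), the paper's is more economical.

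There are, however, two concrete defects in your execution. First, the step ``apply the scalar-product inequality componentwise and then sum'' does not work: summing the per-index inequalities $(X_{(j)}^\ast,X_{(j)})(Y_{(k)}^\ast,Y_{(k)})\geq(X_{(j)}^\ast,Y_{(k)})(Y_{(k)}^\ast,X_{(j)})$ over $j,k$ does not produce the product of the summed norms against the square of the summed pairing; you must apply a \emph{single} Cauchy--Schwarz inequality in the $m^2$-dimensional module of tensors, to the two elements $(T^{ij})$ and $(\D^{ij})$. Second, your test vector $Y^i=\delta^i_k$ is the wrong one: with the identity tensor one gets $\angles{\delta,\delta}=m$ and hence only the weaker bound $m\,(T^{ij})^\ast T_{ij}\geq(\Tr T)^\ast(\Tr T)$; to obtain $\Tr\D$ (i.e.\ the geometric dimension $n$) rather than the ambient dimension $m$, the second slot must carry the projector $\D^{ij}$ itself, whose ``norm'' is $\D^{ij}\D_{ij}=\D^i_i=\Tr\D$ by idempotency. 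Both defects are repairable, and once repaired your proof reduces to the same computation as the paper's.
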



\begin{proof}
  First, we note that for a tangential tensor it holds that
  \begin{align*}
    \Tr T = T^i_i = \D^{ik}T_{ik},
  \end{align*}
  and writing
  \begin{align*}
    (\Tr\D)T^{ij} = (\Tr\D)T^{ij}-(\Tr T)\D^{ij}+(\Tr T)\D^{ij},
  \end{align*}
  it follows that (recall that $(\D^{ij})^\ast=\D^{ij}$)
  \begin{equation}\label{eq:Tsquareproof}
    \begin{split}
      (\Tr\D)^2(T^{ij})^\ast T_{ij} &= \parab{(\Tr\D)T^{ij}-(\Tr T)\D^{ij}}^\ast
      \parab{(\Tr\D)T_{ij}-(\Tr T)\D_{ij}}\\
      &\qquad\qquad
      +(\Tr T)^\ast(\Tr T)(\Tr\D)
    \end{split}
  \end{equation}
  since
  \begin{align*}
    \parab{(\Tr\D)T^{ij}-(\Tr T)\D^{ij}}^\ast(\Tr T)\D_{ij} = 0.
  \end{align*}
  Using that the first term in (\ref{eq:Tsquareproof}) is positive
  gives
  \begin{align*}
    \paraa{\Tr\D}^2(T^{ij})^\ast T_{ij}\geq
    \paraa{\Tr\D}(\Tr T)^\ast(\Tr T).
  \end{align*}
  which completes the proof.
\end{proof}

\noindent If the geometric dimension of the algebra is $n$, the
inequality can be written as
\begin{align*}
  (T^{ij})^\ast T_{ij} \geq \frac{1}{n}(\Tr T)^\ast(\Tr T).
\end{align*}
In the following the above inequality will be applied to the tensor
$\nabla_i\nabla_j(u)$, where $u$ is a hermitian element of $\A$, which
gives
\begin{align}\label{eq:algebraicTraceIneq}
  \nabla^i\nabla^j(u)\nabla_j\nabla_i(u)\geq
  \frac{1}{n}\paraa{\nabla^i\nabla_i(u)}^2.
\end{align}

\subsection{Eigenvalues of the Laplace operator}

\noindent Let us now proceed to define the Laplace operator, and to
show that its eigenvalues are bounded by the Ricci curvature. We start
by introducing the Laplace operator, together with some of its properties.

\begin{definition}
  The operator $\Delta:\A\to\A$, defined as
  \begin{equation}
    \Delta(u) = \nabla^i\nabla_i(u),
  \end{equation}
  is called the \emph{Laplace operator on $\A$}. An \emph{eigenvector
    of $\Delta$} is an element $u\in\A$ such that $\Delta(u)=\lambda
  u$ for some $\lambda\in\complex$. The complex number $\lambda$ is
  then called an \emph{eigenvalue} of $\Delta$.
\end{definition}

\begin{proposition}
  In a geometric almost \KP\ algebra, the Laplace operator is a
  self-adjoint operator with respect to the sesquilinear form
  $\angles{\cdot,\cdot}$. Hence, for any eigenvector $u$ with
  $\angles{u,u}>0$, the corresponding eigenvalue is real.
\end{proposition}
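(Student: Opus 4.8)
The plan is to show that $\Delta$ is symmetric with respect to $\angles{\cdot,\cdot}$, i.e. that $\angles{u,\Delta v}=\angles{\Delta u,v}$ for all $u,v\in\A$; self-adjointness of an eigenvalue then follows from the usual argument $\bar\lambda\angles{u,u}=\angles{\Delta u,u}=\angles{u,\Delta u}=\lambda\angles{u,u}$, so $\lambda=\bar\lambda$ when $\angles{u,u}>0$. The key tool is the tracial property $\ints\nabla_iX^i=0$ combined with the Leibniz rule for the covariant derivative on tangential tensors and the fact that $\nabla$ is a metric connection (Proposition \ref{prop:metricConnection}).

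First I would establish the analogue of integration by parts: for $u,v\in\A$, note that $\nabla^i(u)\nabla_i(v)$ together with the divergence identity gives
\begin{align*}
  \ints \nabla^i\parab{(u)^\ast\nabla_i(v)} = 0,
\end{align*}
provided one checks that $X^i=(u)^\ast\nabla^i(v)$ defines an element of $\X(\A)$ so that the tracial property applies; this holds since $\nabla^i(v)=\D^{ik}\D_k(v)$ is tangential by construction and multiplying by a scalar element of $\A$ preserves tangentiality. Expanding with the Leibniz rule for $\nabla_i$ acting on the tangential object $\nabla^i(v)$ (here one uses that the covariant derivative of a tangential tensor is tangential, established just after \eqref{eq:covariantTensorDef}), this reads
\begin{align*}
  \ints \D_i\paraa{(u)^\ast}\nabla^i(v) + \ints (u)^\ast \nabla_i\nabla^i(v) = 0,
\end{align*}
and since $\nabla^i(v)$ is tangential, $\D_i\paraa{(u)^\ast}$ may be replaced by $\nabla_i\paraa{(u)^\ast}$ in the first term. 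Thus
\begin{align*}
  -\ints \nabla_i\paraa{(u)^\ast}\nabla^i(v) = \ints (u)^\ast\Delta(v).
\end{align*}
The right side of this identity is symmetric in $u$ and $v$ after one more application of the same argument: replacing $v$ by $u$ in the first factor and $u$ by $v$ in the second, or more directly, noting that $\ints \nabla_i\paraa{(u)^\ast}\nabla^i(v)$ is manifestly the sesquilinear pairing $\angles{\nabla u,\nabla v}$, which is conjugate-symmetric using the hypothesis $\{u,v\}^\ast=\{u^\ast,v^\ast\}$ and hermiticity of $\gamma^2$ (so that $(\D^i(w))^\ast=\D^i(w^\ast)$). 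Hence $\ints(u)^\ast\Delta(v) = \overline{\ints(v)^\ast\Delta(u)}$, which is precisely $\angles{u,\Delta v}=\angles{\Delta u,v}$.

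The main obstacle I anticipate is the bookkeeping around tangentiality and the involution: one must verify carefully that $(u)^\ast\nabla^i(v)$ is a genuine element of $\X(\A)$ (not merely of $\Der(\A)$) so that the tracial identity \eqref{eq:traceTotalDerivProp} can be invoked, and that the $\ast$-operation interacts correctly with $\D^i$ and with the raising/lowering of indices, so that $\angles{\nabla u,\nabla v}$ really is conjugate-symmetric. Both points rest on the standing assumptions that $\gamma^2$ is hermitian and $\{u,v\}^\ast=\{u^\ast,v^\ast\}$, together with $(\D^{ij})^\ast=\D^{ij}$ which was already used in the Cauchy–Schwarz proof; once these are in hand the rest is routine. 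The conclusion about real eigenvalues is then the standard one-line argument recalled above.
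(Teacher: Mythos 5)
Your proposal is correct and follows essentially the same route as the paper: both establish $\Delta(u)^\ast=\Delta(u^\ast)$ from the compatibility of $\D^i$ with the involution and then use the tracial property to integrate by parts, reducing $\angles{\Delta u,v}$ and $\angles{u,\Delta v}$ to the common (conjugate-symmetric) expression $-\ints\nabla_i(u^\ast)\nabla^i(v)$, after which the reality of eigenvalues is the standard one-line computation. Your extra care about the tangentiality of $u^\ast\nabla^i(v)$ and the interaction of $\ast$ with $\D^i$ is a welcome filling-in of details the paper leaves implicit, but it is not a different argument.
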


\begin{proof}
  Since $\nabla^i(u)^\ast = \nabla^i(u^\ast)$ it follows that
  $\Delta(u)^\ast = \Delta(u^\ast)$. As the state is tracial, it
  follows that
  \begin{align*}
    \angles{\Delta(u),v} = \ints \nabla^i\nabla_i(u^\ast) v 
    = -\ints\nabla^i(u^\ast)\nabla_i(v) = \ints u^\ast\Delta(v)
    = \angles{u,\Delta(v)}.
  \end{align*}
  Let $u$ be an eigenvector of $\Delta$ with eigenvalue
  $\lambda$. Then it holds that
  \begin{align*}
    \bar{\lambda}\angles{u,u} = \angles{\Delta(u),u} = 
    \angles{u,\Delta(u)} = \lambda\angles{u,u},
  \end{align*}
  from which it follows that $\lambda=\bar{\lambda}$ since
  $\angles{u,u}>0$.
\end{proof}

\noindent Without any further assumptions on the algebra, eigenvectors
of the Laplace operator may in general have degenerate features. Let
us therefore restrict to a particular class of eigenvectors.

\begin{definition}
  Let $u$ be an eigenvector of the Laplace operator with eigenvalue
  $\lambda$. The eigenvector is called \emph{non-degenerate} if
  $\angles{u,u}>0$ and $\angles{\nabla(u),\nabla(u)}>0$.
\end{definition}

\begin{proposition}
  Let $u$ be a non-degenerate eigenvector of the Laplace operator with
  eigenvalue $-\lambda$. Then it follows that $\lambda>0$.
\end{proposition}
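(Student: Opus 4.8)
The plan is to mimic the standard Riemannian argument: an eigenvalue of the Laplacian with $\Delta u = -\lambda u$ satisfies $\lambda = \langle\nabla u,\nabla u\rangle/\langle u,u\rangle$ after a partial integration, so the sign of $\lambda$ is governed by the positivity of $\langle\nabla u,\nabla u\rangle$. Concretely, since the state is tracial we have $\ints \nabla_i X^i = 0$ for all $X\in\X(\A)$; applying this to the tangential vector field $X^i = u^\ast\nabla^i(u)$ (which lies in $\X(\A)$ because $\nabla^i(u) = \D^{ik}\D_k(u)$ is manifestly tangential) and using the Leibniz rule from Proposition~\ref{prop:nablaProperties}~(4) together with $\Delta(u) = \nabla^i\nabla_i(u) = -\lambda u$ gives
\begin{align*}
  0 = \ints \nabla_i\paraa{u^\ast\nabla^i(u)}
    = \ints \nabla_i(u^\ast)\nabla^i(u) + \ints u^\ast\nabla_i\nabla^i(u)
    = \angles{\nabla(u),\nabla(u)} - \lambda\angles{u,u}.
\end{align*}
Here I am using $\nabla_i(u^\ast) = \nabla_i(u)^\ast$ (immediate from $(\D^{ik})^\ast = \D^{ik}$ and the fact that the Poisson bracket is a $\ast$-derivation) and the definition $\angles{X,Y} = \ints(X^\ast,Y)$ extended to derivations. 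One should be slightly careful that $u^\ast\nabla^i(u)$ is genuinely in $\X(\A)$ rather than merely tangential-looking: since $\nabla^i(u)\d_i \in \X(\A)$ and $\X(\A)$ is a module over $\A$ (it is the image of the projector $\D$, and $\D(vX) = v\D(X)$), multiplication by the scalar $u^\ast\in\A$ keeps us inside $\X(\A)$, so the tracial identity applies.

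From the displayed identity we get $\lambda\angles{u,u} = \angles{\nabla(u),\nabla(u)}$. Now $\angles{\nabla(u),\nabla(u)} = \ints \nabla_i(u)^\ast\nabla^i(u) = \ints (\nabla u)^\ast\!\cdot\!(\nabla u)\ge 0$ by the defining positivity property of a state (this is $\ints a^\ast a\ge 0$ applied componentwise and summed, or more precisely it is exactly the hypothesis $\angles{\nabla(u),\nabla(u)}\ge 0$ that is part of the sesquilinear-form structure). By the non-degeneracy hypothesis we have the \emph{strict} inequalities $\angles{u,u}>0$ and $\angles{\nabla(u),\nabla(u)}>0$. Dividing, $\lambda = \angles{\nabla(u),\nabla(u)}/\angles{u,u} > 0$, which is the claim.

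The only real subtlety — and the step I would be most careful about — is justifying the sign conventions and the partial-integration step exactly, i.e.\ confirming that $\Delta(u) = \nabla^i\nabla_i(u)$ with $\Delta(u) = -\lambda u$ feeds through the tracial identity with the right sign, and that $\nabla_i(u^\ast\nabla^i u)$ expands as claimed using only Proposition~\ref{prop:nablaProperties}~(4) (valid since both $u^\ast$ is a scalar and $\nabla^i(u)\d_i\in\X(\A)$). Everything else is formal: the positivity $\angles{\nabla(u),\nabla(u)}\ge 0$ is built into the notion of state, and the strictness is exactly what ``non-degenerate'' supplies. So there is no genuine obstacle here; the proposition is essentially the algebraic transcription of the elementary Riemannian fact that $-\Delta$ is a positive operator, and the proof is three lines once the partial-integration identity is set up.
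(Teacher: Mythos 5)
Your proof is correct and follows essentially the same route as the paper: both establish $\lambda\angles{u,u}=\angles{\nabla(u),\nabla(u)}$ by partial integration via the tracial state and then invoke the strict positivity of both sides supplied by the non-degeneracy hypothesis. The only difference is that you spell out the partial-integration step explicitly (applying $\ints\nabla_iX^i=0$ to $X^i=u^\ast\nabla^i(u)$ and checking tangentiality), which the paper leaves implicit.
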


\begin{proof}
  One computes that
  \begin{align*}
    \lambda\angles{u,u} = -\angles{u,\Delta(u)}
    =-\ints u^\ast\Delta(u) = \ints\nabla_i(u)^\ast\nabla^i(u)
    =\angles{\nabla(u),\nabla(u)}.
  \end{align*}
  Since $u$ is assumed to be non-degenerate, it holds that both
  $\angles{u,u}$ and $\angles{\nabla(u),\nabla(u)}$ are strictly
  positive, which implies that $\lambda>0$.
\end{proof}

\noindent We shall now prove, in the purely algebraic setting, a
classical theorem of differential geometry saying that a bound on the
Ricci curvature induces a bound on the eigenvalues of the Laplace
operator (corresponding to non-degenerate eigenvectors) on a compact
manifold.

\begin{theorem}
  Let $\A$ be a geometric almost \KP\ algebra with geometric dimension
  $n\geq 2$, and let $-\lambda\neq 0$ be an eigenvalue of the Laplace
  operator corresponding to a non-degenerate eigenvector $u$. If there
  exists a real number $\kappa>0$ such that $\R(X^\ast,X)\geq
  \kappa(X^\ast,X)$ for all $X\in\X(\A)$ then $\lambda\geq
  n\kappa/(n-1)$.
\end{theorem}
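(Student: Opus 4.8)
The plan is to mimic the classical Lichnerowicz--Obata type argument (more precisely the Bochner method proof of Lichnerowicz's eigenvalue estimate) entirely within the algebraic calculus developed above. The starting point is the algebraic Bochner--Weitzenb\"ock identity encoded in equation (\ref{eq:graduRicci}): for any $u\in\A$,
\begin{align*}
  \nablah^i(u)\nablah_i\nablah_j\nablah^j(u) &= \nablah_i\nablah^i\nablah^j(u)\nablah_j(u)
  +[\nablah_i,\nablah^j](u)\nablah^i\nablah_j(u)-\R(\nabla u,\nabla u),
\end{align*}
rewritten with the $\nabla_i$ notation of Section \ref{sec:KPalgebras} (these agree on tangential objects). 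Since Proposition \ref{prop:effectivelySymmetric}, in its algebraic incarnation via Lemma \ref{lemma:DDSymmetric}, tells us the commutator term $[\nabla_i,\nabla^j](u)$ vanishes when contracted appropriately, the identity simplifies to the clean form $\nabla^i(u)\nabla_i\Delta(u) = \nabla_i\nabla^i\nabla^j(u)\nabla_j(u) - \R(\nabla u,\nabla u)$, i.e. a commutation formula relating $\Delta$ acting after one gradient with the full second-covariant-derivative tensor plus a Ricci correction.

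First I would integrate this identity against the tracial state. Using $\Delta(u) = -\lambda u$ and self-adjointness/traciality of $\ints$, the left-hand side integrates to $-\lambda\ints\nabla^i(u)\nabla_i(u) = -\lambda\angles{\nabla(u),\nabla(u)}$ (for hermitian $u$; one first reduces to the hermitian case, or carries $u^\ast$ along throughout). On the right, partial integration turns $\ints\nabla_i\nabla^i\nabla^j(u)\,\nabla_j(u)$ into $-\ints\nabla^i\nabla^j(u)\,\nabla_i\nabla_j(u) = -\angles{\nabla^2 u,\nabla^2 u}$ (a genuinely negative-definite quantity by the preorder), and the Ricci term integrates to $\ints\R(\nabla u,\nabla u)\geq \kappa\ints(\nabla u,\nabla u) = \kappa\angles{\nabla(u),\nabla(u)}$ by the hypothesis $\R(X^\ast,X)\geq\kappa(X^\ast,X)$. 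This yields
\begin{align*}
  \lambda\angles{\nabla(u),\nabla(u)} \geq \angles{\nabla^2 u,\nabla^2 u} + \kappa\angles{\nabla(u),\nabla(u)}.
\end{align*}

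Next I would invoke the algebraic Cauchy--Schwarz inequality for tensors, equation (\ref{eq:algebraicTraceIneq}), which gives $\nabla^i\nabla^j(u)\nabla_j\nabla_i(u)\geq \frac1n(\Delta u)^2$ pointwise; integrating and using $\Delta u = -\lambda u$ gives $\angles{\nabla^2 u,\nabla^2 u}\geq \frac{\lambda^2}{n}\angles{u,u}$. Combined with the relation $\lambda\angles{u,u} = \angles{\nabla(u),\nabla(u)}$ (which appears in the proof just above the theorem, $\lambda\angles{u,u}=\angles{\nabla u,\nabla u}$), substituting $\angles{\nabla(u),\nabla(u)} = \lambda\angles{u,u}$ into the displayed inequality and cancelling the strictly positive factor $\lambda\angles{u,u}$ (non-degeneracy and $\lambda>0$) leaves $\lambda \geq \frac{\lambda}{n} + \kappa$, i.e. $\lambda\frac{n-1}{n}\geq\kappa$, which is exactly $\lambda\geq n\kappa/(n-1)$.

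The main obstacle I anticipate is bookkeeping with the involution: equation (\ref{eq:graduRicci}) was derived in the differential-geometric setting for a real function $u$, and in the algebra one must carry $u^\ast$ through every partial integration so that all the inner products $\angles{\cdot,\cdot}=\ints(\cdot)^\ast(\cdot)$ come out with the correct conjugations and the positivity statements (via the preorder $\geq$) actually apply. Care is also needed to verify that the algebraic commutator identities of Lemma \ref{lemma:DDSymmetric} are exactly what is required to kill the $[\nabla_i,\nabla^j](u)$ term in the integrated Bochner identity, and that the Ricci tensor appearing in (\ref{eq:graduRicci}) matches the $\R$ in the hypothesis (contraction conventions). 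Once these are pinned down, the inequality chain is a short computation; the conceptual content is entirely in the Bochner identity, the tensorial Cauchy--Schwarz bound, and traciality.
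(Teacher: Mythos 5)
Your proposal is correct and follows essentially the same route as the paper: the paper's two evaluations of $\ints(\Delta(u))^2$ (via traciality and via two partial integrations plus the curvature commutation relation) are exactly your integrated Bochner identity $\lambda\angles{\nabla(u),\nabla(u)}=\angles{\nabla^2u,\nabla^2u}+\ints\R(\nabla u,\nabla u)$, after which both arguments conclude identically with the tensorial Cauchy--Schwarz bound, the Ricci hypothesis, and cancellation of the strictly positive factor $\angles{\nabla(u),\nabla(u)}$. The only cosmetic difference is that the paper derives the identity directly in the algebraic setting from (\ref{eq:curvatureInComponents}) rather than importing (\ref{eq:graduRicci}), and it reduces to hermitian $u$ at the outset exactly as you anticipate.
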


\begin{proof}
  First we note that one can always choose the eigenvector $u$ to be
  hermitian, since $u^\ast$ is also an eigenvector of $\Delta$ with
  eigenvalue $\lambda$.  Let us start by writing
  \begin{align}\label{eq:compDelta}
    \ints \paraa{\Delta(u)}^2 =-\lambda\ints u\Delta(u)
    =-\lambda\ints u\nabla^i\nabla_i(u) 
    = \lambda\ints\nabla^i(u)\nabla_i(u),
  \end{align}
  since the state is assumed to be tracial.  One the other hand one
  gets
  \begin{align*}
    \ints\paraa{\Delta(u)}^2 = \ints\nabla^i\nabla_i(u)\nabla^k\nabla_k(u)
    =-\ints\nabla_k(u)\nabla^k\nabla^i\nabla_i(u),
  \end{align*}
  and using equation (\ref{eq:curvatureInComponents}) gives
  \begin{align*}
    \ints\paraa{\Delta(u)}^2 = -\ints\bracketb{
      \nabla_k(u)\nabla^i\nabla^k\nabla_i(u)
      -\R(\nabla u,\nabla u)}.
  \end{align*}
  After partial integration one obtains
  \begin{align*}
    \ints\paraa{\Delta(u)}^2 = \ints\bracketb{
    \nabla_i\nabla_k(u)\nabla^k\nabla^i(u)+\R(\nabla u,\nabla u)}.
  \end{align*}
  Now, using the inequality (\ref{eq:algebraicTraceIneq}) together
  with the assumption that $\R(X^\ast,X)\geq\kappa(X^\ast,X)$ for all
  $X\in\X(\A)$ gives
  \begin{align*}
    \ints\paraa{\Delta(u)}^2&\geq 
    \frac{1}{n}\ints\paraa{\Delta(u)}^2
    +\kappa\ints\nabla^i(u)\nabla_i(u)\\
    &=\parab{\frac{\lambda}{n}+\kappa}\ints\nabla^i(u)\nabla_i(u),
  \end{align*}
  Now, we compare this expression with (\ref{eq:compDelta}) and
  conclude that
  \begin{align*}
    \frac{1}{n}\paraa{\lambda(n-1)-n\kappa}\ints\nabla^i(u)\nabla_i(u)
    =\frac{1}{n}\paraa{\lambda(n-1)-n\kappa}\ints\nabla^i(u)\nabla_i(u)\geq 0,
  \end{align*}
  which implies $\lambda\geq n\kappa/(n-1)$ since $u$ is a
  non-degenerate eigenvector.
\end{proof}

\section{Examples}\label{sec:examples}

\noindent Let us consider two examples of \KP\ algebras that are
constructed in an algebraic way, although they have clear geometrical
interpretations.

\subsection{A simple flat example}

\noindent Let $\A$ be generated by
\begin{align*}
  \{x^i\}=\{p^1,\ldots,p^n,q^1,\ldots,q^n,n^1,\ldots,n^p\}
\end{align*}
and we shall let indices $a,b,c,\ldots$ run from $1$ to $n$ and
indices $A,B,C,\ldots$ from $1$ to $p$. We introduce a Poisson
structure defined by
\begin{align*}
  &\{p^a,p^b\}=\{q^a,q^b\}=\{n^A,x^i\}=0\\
  &\{p^a,q^b\}=\delta^{ab}\gamma\cdot 1 
\end{align*}
with $\gamma\in\reals$. It is easy to check that it is a \KP\ algebra
with characteristic function $\gamma^2\cdot 1$. The projection
operators $\D,\Pi$ become
\begin{align*}
  (\D_{ik})=\diag(\underbrace{1,\ldots,1}_{2n},0,\ldots,0)\qquad
  (\Pi_{ik})=\diag(0,\ldots,0,\underbrace{1,\ldots,1}_p),
\end{align*}
from which it follows that $\X(\A)$ is $2n$-dimensional and a basis is
given by $\d_1,\ldots,\d_{2n}$. Hence, $\D_i=\d_i$ and $R(X,Y,Z,V)=0$
for all $X,Y,Z,V\in\X(\A)$.

\subsection{Algebras defined by a polynomial}

\noindent Let us introduce a Poisson algebra which has been used to
construct matrix regularizations of surfaces
\cite{abhhs:fuzzy,abhhs:noncommutative,a:repcalg,as:affine,a:phdthesis}.
Let $\A=\complex[x^1,x^2,x^3]$ be the polynomial algebra in three
variables together with the Poisson structure
\begin{align*}
  \{x^i,x^j\} = \eps^{ijk}\d_kC
\end{align*}
where $C$ is an arbitrary (hermitian) element of $\A$, and
$\eps^{ijk}$ is the totally anti-symmetric Levi-Civita symbol. It is
easy to check that $\A$ is an almost \KP\ algebra with
\begin{align*}
  \gamma^2 = (\d_1C)^2+(\d_2C)^2+(\d_3C)^2.
\end{align*}
Moreover, one also can check that $\A$ is a \KP\ algebra. The
projection operator $\D_{ik}$ is computed to be
\begin{align*}
  \D_{ik} = \delta_{ik}-\frac{1}{\gamma^2}\paraa{\d_iC}\paraa{\d_kC},
\end{align*}
which gives $\Pi_{ik} = (\d_iC)(\d_kC)/\gamma^2$. Hence, the geometric
dimension of $\A$ is $2$, and a basis for $\N(\A)$ (which is then
one-dimensional) is given by $\sum_{i=1}^3(\d_iC)\d_i$. By using Gauss
formula (in Proposition \ref{prop:GaussFormula}), one computes the
curvature to be
\begin{align*}
  R(X,Y,Z,V) = \frac{1}{\gamma^2}\parab{\paraa{\d^2_{ik}C}\paraa{\d^2_{jl}C}
  -\paraa{\d^2_{il}C}\paraa{\d^2_{jk}C}}X^iY^jZ^kV^l.
\end{align*}
For instance, choosing $C=x^2+y^2+z^2-r^21$, with $r\in\reals$, one
computes that $\A$ has constant curvature, i.e. $K(X,Y)=1/\gamma^2$
for all $X,Y\in\X(\A)$. Note that if one considers the quotient
algebra $\A\slash\angles{C}$ (to which the Poisson structure
restricts), the sectional curvature will be a constant,
i.e. $K(X,Y)=1/r^2$.

\section*{Acknowledgment}

\noindent J. A. would like to thank J. Hoppe and H. Steinacker for discussions.

\bibliographystyle{alpha}
\bibliography{kahlerpb}

\end{document}